\numberwithin{equation}{section}
\theoremstyle{plain}
\newtheorem{theorem}{Theorem}[section]
\newtheorem{lemma}{Lemma}[section]
\newtheorem{corollary}{Corollary}[section]
\newtheorem{proposition}{Proposition}[section]
\newtheorem{remark}{Remark}[section]
\def\R{\mathbb R}
\begin{document}
\title[On rigidity of hypersurfaces with constant shifted curvature functions]{On rigidity of hypersurfaces with constant shifted curvature functions in warped product manifolds}

\author{Weimin Sheng}
\address{Weimin Sheng: School of Mathematical Sciences, Zhejiang University, Hangzhou 310058, China.}
\email{shengweimin@zju.edu.cn}

\author{Yinhang Wang}
\address{Yinhang Wang: School of Mathematical Sciences, Zhejiang University, Hangzhou 310058, China.}
\email{wangyinhang@zju.edu.cn}

\author{Jie Wu}
\address{Jie Wu: School of Mathematical Sciences, Zhejiang University, Hangzhou 310058, China.}
\email{wujiewj@zju.edu.cn}

\subjclass[2020]{53C24, 53C42.}
\keywords{Constant shifted curvature, Rigidity, Warped product manifolds}

\begin{abstract}
In this paper, we give some new characterizations of umbilic hypersurfaces in general warped product manifolds, which can be viewed as generalizations of the work in \cite{KLP18} and \cite{WX14}. Firstly, we prove the rigidity for hypersurfaces with constant linear combinations of shifted higher order mean curvatures. Using integral inequalities and Minkowski-type formulas, we then derive rigidity theorems in sub-static warped product manifolds, including cases that the hypersurface satisfies some nonlinear curvature conditions. Finally, we show that our results can be applied to more general warped product manifolds, including the cases with non-constant sectional curvature fiber.
\end{abstract}

\maketitle

\baselineskip16pt
\parskip3pt

\section{Introduction}

The rigidity problem for hypersurfaces in Riemannian manifolds has been studied widely. It is well-known that Alexandrov \cite{A56} proved that any closed, embedded hypersurface in $\mathbb{R}^{n+1}$ with constant mean curvature must be a round sphere. This result was established by the reflection method and later reproved by Reilly's integral formula \cite{R77}.
In the 1990s, A. Ros \cite{Ros88,Ros87} generalized the result to hypersurfaces with constant higher order mean curvatures, while Montiel and Ros \cite{MR91} provided an alternative proof, inspired by the Heintze-Karcher inequality \cite{HK78}, using a direct way based on integration. Later, Montiel \cite{M99} studied hypersurfaces with constant mean curvature in warped product manifolds, albeit under the assumption the hypersurface being star-shaped. This restriction was later removed by Brendle \cite{B13}, who proved an Alexandrov-type theorem in a class of sub-static warped product manifolds, including de Sitter-Schwarzschild and Reissner-Nordstrom manifolds. Brendle and Eichmair \cite{BE13} extended these results to the closed star-shaped hypersurfaces with constant higher order mean curvatures. Other generalizations can be found in  \cite{ADM13,AD14,BC97,HLMG09,HMZ01,K98,K00,LWX14} and references therein.

In this paper, we investigate several rigidity problems for hypersurfaces with constant shifted curvature functions embedded in the warped product manifolds.

For $n \geq 2$, let $\left(N^{n}, g_N\right)$ be an $n$-dimensional compact Riemannian manifold. We consider the warped product manifold $M^{n+1}=[0, \bar{r}) \times N^n$ $(0 < \bar{r} \leq \infty)$ equipped with the metric
$$
\bar{g}=d r^2+\lambda(r)^2 g_N,
$$
where $\lambda:[0, \bar{r})\rightarrow\mathbb{R}$ is a smooth positive function. Let $\Phi(r)$ be a function satisfying $\Phi'(r)=\lambda(r)$. Given $\varepsilon \in \mathbb{R}$, let $\Sigma$ be a smooth hypersurface in $M^{n+1}$, its shifted principal curvatures are defined by ${\kappa} - \varepsilon=(\kappa_1 - \varepsilon, \cdots, \kappa_n - \varepsilon)$, which are eigenvalues of the shifted Weingarten matrix $(h_{i}^{j} - \varepsilon \delta_{i}^{j})$. Then a hypersurface $\Sigma$ is called \textit{strictly shifted $k$-convex} if $(\kappa-\varepsilon) \in$ ${\Gamma}_k^{+}$ for all $n$-tuples of shifted principal curvatures along $\Sigma$. Here ${\Gamma}_k^{+}$ is so-called G\aa{}rding cone, see \eqref{garding cone} for its definition. Denote $\nu$ as its unit outward normal vector and $u=\bar{g}(\lambda\partial_{r},\nu)$ is the support function of $\Sigma$. If $u>0$ everywhere on $\Sigma$, then we say that $\Sigma$ is \textit{star-shaped}.

Recently, the study of shifted principal curvatures has attracted a lot of attentions, particularly in $\mathbb{H}^{n+1}$. For overcoming the obstructions produced by negative curvature, these modifications are essential. For example, Andrews \cite{B94} and Brendle-Huisken \cite{BH17} considered fully nonlinear contracting curvature flows with speeds depending on shifted principal curvatures. In $\mathbb{H}^{n+1}$, the curvature flows and geometric inequalities involving positivity or non-negativity of $\kappa_{i}-1$ for $1\le i \le n$ have been studied extensively in \cite{BCW21,HLW22,LX,WWZ23}, etc.

In \cite{HWZ23}, Hu, Wei and Zhou established a new Heintze-Karcher type inequality involving the shifted mean curvature in $\mathbb{H}^{n+1}$. As applications, they obtained an Alexandrov-type theorem. Later, we extended this result to some weighted higher order mean curvatures and their linear combinations in \cite{SWW24}. The main purpose of this paper is regarding these rigidity results in the warped product manifolds.

\begin{theorem}\label{warped product thm1}
	Suppose $M^{n+1}=\left([0, \bar{r}) \times N^n (K), \bar{g}=d r^2+\lambda(r)^2 g_N\right)$ is a warped product manifold satisfying
	$$\lambda'(r)^2 - \lambda^{\prime \prime}(r) \lambda(r) < K, \quad \forall r \in(0, \bar{r}),$$
	where $\left(N^{n}(K), g_N\right)$ is a compact manifold with constant sectional curvature $K$. Let $\varepsilon$ be a real number and $\Sigma$ be a closed, strictly shifted $k$-convex star-shaped hypersurface in $M^{n+1}$. Assume that $\Sigma$ satisfies one of the following equations:
	\begin{itemize}
		\item[(i)] $2\leq l \leq k\leq n$, $\lambda'-\varepsilon u>0$ on $\Sigma$ and there are nonnegative $C^1$-smooth functions $\{a_i\}_{i=1}^{l-1}$ and $\{b_j\}_{j=l}^{k}$ defined on $\mathbb{R}^2$, which satisfy $\partial_{1}a_i, \partial_{2}b_j \leq 0$ and $\partial_{2}a_i, \partial_{1}b_j \geq 0$, at least one of them are positive, such that
		\begin{equation*}
			\sum_{i=1}^{l-1}a_i (\Phi(r),\varepsilon\Phi(r)-u) H_i ({\kappa}-\varepsilon)=\sum_{j=l}^{k}b_j (\Phi(r),\varepsilon\Phi(r)-u)H_j ({\kappa}-\varepsilon);
		\end{equation*}
		\item[(ii)] $1\leq l \leq k\leq n-1$ and there are nonnegative $C^1$-smooth functions $\{a_i\}_{i=0}^{l-1}$ and $\{b_j\}_{j=l}^{k}$ defined on $\mathbb{R}^2$, which satisfy $\partial_{1}a_i, \partial_{2}b_j \leq 0$ and $\partial_{2}a_i, \partial_{1}b_j \geq 0$, at least one of them are positive, such that
		\begin{equation}\label{linear form}
			\sum_{i=0}^{l-1}a_i(\Phi(r),\varepsilon\Phi(r)-u) H_i ({\kappa}-\varepsilon)=\sum_{j=l}^{k}b_j (\Phi(r),\varepsilon\Phi(r)-u)H_j ({\kappa}-\varepsilon).
		\end{equation}
	\end{itemize}
	If $\partial_{2}a_i$ or $\partial_{2}b_j $ does not vanish at some point for some $0\le i\le l-1$ or $l\le j\le k$,  we further assume that $h_{ij}>\varepsilon g_{ij}$ on $\Sigma$. Then $\Sigma$ is a slice $\{r_0\} \times N(K)$ for some constant $r_0 $.
\end{theorem}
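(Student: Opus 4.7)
The plan is to extend the Alexandrov--Ros--Brendle integral scheme to the shifted setting by combining three ingredients: shifted Minkowski-type identities in the warped product, a shifted Heintze--Karcher inequality under the hypothesis $(\lambda')^2 - \lambda''\lambda < K$, and the Newton--Maclaurin inequalities on the G\aa{}rding cone $\Gamma_k^+$.

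First I would derive the shifted Minkowski identities. The vector field $X = \lambda(r)\partial_r$ is conformal Killing with $\bar\nabla_Y X = \lambda'(r)\, Y$ for all $Y$. Applying the divergence theorem on $\Sigma$ to the contraction $(T_{k-1})_i^{\,j} (X^\top)_j$, where $T_{k-1}$ is the $(k-1)$-th Newton tensor of the shifted Weingarten operator $h_i^{\,j} - \varepsilon\, \delta_i^{\,j}$, yields for every $1 \le k \le n$
\begin{equation*}
\int_\Sigma (\lambda'(r) - \varepsilon u)\, H_{k-1}({\kappa} - \varepsilon)\, dA \;=\; \int_\Sigma u\, H_k({\kappa} - \varepsilon)\, dA.
\end{equation*}
Next I would adapt Brendle's argument to establish, under $(\lambda')^2 - \lambda''\lambda < K$, the shifted Heintze--Karcher inequality
\begin{equation*}
\int_\Sigma \frac{\lambda'(r) - \varepsilon u}{H_1({\kappa} - \varepsilon)}\, dA \;\ge\; \frac{1}{n}\int_\Sigma u\, dA,
\end{equation*}
with equality exactly when $\Sigma$ is shifted totally umbilic. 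Finally, the Newton--Maclaurin inequalities on $\Gamma_k^+$ give $H_{i-1}H_j \le H_i H_{j-1}$ for $1 \le i \le j \le k$, so the ratios $H_m/H_{m-1}$ are non-increasing in $m$.

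For the main combination argument in Case (i), multiply the equation \eqref{linear form} term by term by $u$ and integrate over $\Sigma$. The Minkowski identity converts each $\int u\, a_i H_i\, dA$ into an integral against $(\lambda' - \varepsilon u) H_{i-1}$ plus a correction term involving $\bar\nabla a_i$, and this is precisely where the monotonicity signs $\partial_1 a_i \le 0$, $\partial_2 a_i \ge 0$ (together with the opposite signs for $b_j$) enter to guarantee that the correction has a definite sign. Iterating the shift down to index $l-1$ on the left and up to index $l$ on the right, and then applying the pointwise Newton--Maclaurin inequalities, produces an integral inequality of the form
\begin{equation*}
\int_\Sigma F \cdot (\lambda' - \varepsilon u)\, dA \;\ge\; 0,
\end{equation*}
in which $F$ is a nonnegative combination of ratios $H_m/H_{m-1}$ that vanishes identically if and only if $\Sigma$ is shifted umbilic. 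The hypothesis $\lambda' - \varepsilon u > 0$ then forces $F \equiv 0$ pointwise, and the equality case of Newton--Maclaurin forces $\kappa_1 - \varepsilon = \cdots = \kappa_n - \varepsilon$. Case (ii) is handled analogously, except that the $i = 0$ term $a_0 H_0$ carries no curvature factor and cannot be Minkowski-shifted; it is instead absorbed by coupling the integrated equation with the shifted Heintze--Karcher inequality above.

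Once $\Sigma$ is shifted totally umbilic, the strict condition $(\lambda')^2 - \lambda''\lambda < K$ on the ambient metric excludes all closed umbilic hypersurfaces except the slices $\{r_0\} \times N(K)$, and the conclusion follows. The main obstacle in this plan is controlling the gradient corrections produced when the two-variable weights $a_i(\Phi, \varepsilon\Phi - u)$ and $b_j(\Phi, \varepsilon\Phi - u)$ are pulled through the Minkowski identity: the sign assumptions on $\partial_1 a_i, \partial_2 a_i, \partial_1 b_j, \partial_2 b_j$ are tailored to ensure these corrections contribute nonnegatively, while the technical hypotheses $\lambda' - \varepsilon u > 0$ (Case (i)) and $h_{ij} > \varepsilon g_{ij}$ (when some $\partial_2$ is nontrivial) are precisely what is needed to keep every factor in the integrand positive so that the chain of inequalities can be closed and saturated only at shifted umbilics.
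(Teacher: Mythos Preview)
Your overall plan has the right architecture (weighted Minkowski identities plus Newton--Maclaurin, with sign control from the monotonicity hypotheses), but there are two concrete gaps that would prevent it from going through as written.

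First, the shifted Minkowski identity you state,
\[
\int_\Sigma (\lambda' - \varepsilon u)\, H_{k-1}(\kappa-\varepsilon)\,dA \;=\; \int_\Sigma u\, H_k(\kappa-\varepsilon)\,dA,
\]
is only valid in space forms, where the Codazzi equation makes $T_{k-1}(\tilde h)$ divergence-free. In a general warped product the divergence of $T_{k-1}$ produces an additional term built from $\operatorname{Ric}(e_j,\nu)\nabla_j\Phi$; the correct identity (for $k\ge 2$) is
\[
\int_\Sigma u\,H_k(\kappa-\varepsilon)\,dA \;=\; \int_\Sigma (\lambda'-\varepsilon u)\,H_{k-1}(\kappa-\varepsilon)\,dA \;+\; (k-1)\int_\Sigma \sum_{p} A_p\, H_{k-2;p}(\tilde h)\,dA,
\]
where $A_p = -\tfrac{1}{n(n-1)}\operatorname{Ric}(e_p,\nu)\nabla_p\Phi$. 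The hypothesis $(\lambda')^2 - \lambda''\lambda < K$ together with star-shapedness is exactly what makes $A_p\ge 0$. This extra term is not a nuisance to be discarded: it must be compared across the $a_i$ and $b_j$ pieces via a pointwise inequality of the form $(j-1)H_{j-2;p}H_i > (i-1)H_j H_{i-2;p}$, and in the end the equality chain forces $A_p\equiv 0$, i.e.\ $\nu\parallel\partial_r$, which is precisely how one concludes that $\Sigma$ is a slice rather than merely umbilic. Your final step (``umbilic plus the curvature condition excludes everything but slices'') is not the mechanism here.

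Second, the shifted Heintze--Karcher inequality you invoke for Case~(ii) is not available under the stated hypotheses: the versions in the literature require the ambient to be sub-static and the boundary to be static-convex, neither of which is assumed in this theorem. The paper avoids Heintze--Karcher entirely: in Case~(ii) one multiplies the constraint by $(\lambda'-\varepsilon u)$ instead of $u$ and uses the Minkowski identity to shift indices \emph{up} (from $H_j$ to $H_{j+1}$), then closes the chain using $u>0$ from star-shapedness. Both cases are therefore handled by the same mechanism---weighted Minkowski plus Newton--Maclaurin plus control of the $A_p$ terms---with no appeal to Heintze--Karcher.
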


For any $\alpha>0$ and $\varepsilon\geq0$, if we set $b_{k}(s,t)=(\varepsilon s - t)^{\frac{1}{\alpha}}$, $a_{0}(s,t)=1$ and let all other terms vanish in (\ref{linear form}), we obtain the following result:

\begin{corollary}\label{coro}
	Suppose $M^{n+1}=\left([0, \bar{r}) \times N^n (K), \bar{g}=d r^2+\lambda(r)^2 g_N\right)$ is a warped product manifold satisfying
	$$\lambda'(r)^2 - \lambda^{\prime \prime}(r) \lambda(r) < K, \quad \forall r \in(0, \bar{r}),$$
	where $\left(N^{n}(K), g_N\right)$ is a compact manifold with constant sectional curvature $K$. Let $\varepsilon\geq0$ be a real number and $\Sigma$ be a closed hypersurface in $M^{n+1}$ with $h_{ij}>\varepsilon g_{ij}$ on $\Sigma$. If $\Sigma$ satisfies
	\begin{equation}\label{self similar soulution}
		\left(H_{k}(\kappa- \varepsilon)\right)^{-\alpha} = {u},
	\end{equation}
	for any fixed $k$ with $1\leq k\leq n-1$ and $\alpha>0$, then $\Sigma$ is a slice $\{r_0\} \times N(K)$ for some constant $r_0 $.
\end{corollary}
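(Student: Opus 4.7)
The plan is to deduce Corollary \ref{coro} as a direct specialization of Theorem \ref{warped product thm1}(ii); the proof reduces to choosing the functions $a_i, b_j$ in the linear form \eqref{linear form} so that they encode the identity \eqref{self similar soulution}. The natural choice, as flagged in the statement, is $l = k$ together with
$$a_0(s,t) \equiv 1, \qquad a_1 = \cdots = a_{k-1} \equiv 0, \qquad b_k(s,t) = (\varepsilon s - t)^{1/\alpha}.$$
Substituting $s = \Phi(r)$ and $t = \varepsilon \Phi(r) - u$ gives $\varepsilon s - t = u$, so \eqref{linear form} reads $1 = u^{1/\alpha}\, H_k(\kappa - \varepsilon)$, which is precisely \eqref{self similar soulution}.

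With the reduction in place, I would then verify the remaining hypotheses of Theorem \ref{warped product thm1}(ii) one by one. The pointwise pinching $h_{ij} > \varepsilon g_{ij}$ gives $\kappa_i - \varepsilon > 0$ for every $i$, so $\kappa - \varepsilon \in \Gamma_n^+ \subset \Gamma_k^+$; this simultaneously yields the strict shifted $k$-convexity of $\Sigma$ and the positivity of $H_k(\kappa - \varepsilon)$. The latter, combined with \eqref{self similar soulution}, forces $u = H_k(\kappa - \varepsilon)^{-\alpha} > 0$ on $\Sigma$, delivering the star-shapedness required by the theorem. The warped-product curvature inequality on $\lambda$ is built into the hypothesis and requires no further work.

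The main step to check is the sign conditions on $a_i$ and $b_k$. For $a_0 \equiv 1$ they are trivial. For $b_k$, direct differentiation gives
$$\partial_1 b_k = \frac{\varepsilon}{\alpha}(\varepsilon s - t)^{1/\alpha - 1}, \qquad \partial_2 b_k = -\frac{1}{\alpha}(\varepsilon s - t)^{1/\alpha - 1},$$
both evaluated on the locus $\varepsilon s - t = u > 0$. This is the only delicate point of the argument, and it is precisely where $\varepsilon \geq 0$ is used: it ensures $\partial_1 b_k \geq 0$, while the minus sign in $\partial_2 b_k$ handles the other inequality automatically. Both $a_0$ and $b_k$ are nonnegative with $a_0 \equiv 1 > 0$. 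Since $\partial_2 b_k$ is nowhere zero, the additional assumption $h_{ij} > \varepsilon g_{ij}$ triggered by the last clause of Theorem \ref{warped product thm1}(ii) is exactly the standing hypothesis of the corollary. With all hypotheses verified, Theorem \ref{warped product thm1}(ii) yields $\Sigma = \{r_0\} \times N(K)$ for some $r_0$, completing the plan.
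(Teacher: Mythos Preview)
Your proposal is correct and mirrors the paper's own derivation: the paper obtains the corollary precisely by substituting $a_0\equiv 1$, $b_k(s,t)=(\varepsilon s-t)^{1/\alpha}$, and all other coefficients zero into Theorem~\ref{warped product thm1}(ii). You have simply made explicit the hypothesis checks (star-shapedness from $u=H_k^{-\alpha}>0$, shifted convexity from $h_{ij}>\varepsilon g_{ij}$, and the sign conditions on $\partial_1 b_k,\partial_2 b_k$ using $\varepsilon\ge 0$) that the paper leaves implicit.
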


\begin{remark}
	Hypersurfaces satisfying (\ref{self similar soulution}) can be seen as self-similar solutions to the shifted curvature flows expanding by $\left(H_{k}(\kappa- \varepsilon)\right)^{-\alpha}$. When $\varepsilon=0$, Gao \cite[Corollary 4]{G23} derived this result under weaker convexity assumption and held for immersed hypersurfaces.
\end{remark}

Theorem \ref{warped product thm1} extends \cite[Theorem 2]{KLP18} and \cite[Theorem 2 and Theorem 3]{WX14}. For the same rigidity problems in the space forms, the star-shapedness is not necessary.

\begin{theorem}\label{space forem thm1}
	Let $\varepsilon$ be a real number and $\Sigma$ be a closed strictly shifted $k$-convex hypersurface in space form $M^{n+1}=\mathbb{R}^{n+1}$, $\mathbb{H}^{n+1}$ or $\mathbb{S}_{+}^{n+1}$. Assume that $\Sigma$ satisfies one of the following equations:
	\begin{itemize}
		\item[(i)] $2\leq l \leq k\leq n$, $\lambda'-\varepsilon u>0$ on $\Sigma$ and there are nonnegative $C^1$-smooth functions $\{a_i\}_{i=1}^{l-1}$ and $\{b_j\}_{j=l}^{k}$ defined on $\mathbb{R}^2$, which satisfy $\partial_{1}a_i, \partial_{2}b_j \leq 0$ and $\partial_{2}a_i, \partial_{1}b_j \geq 0$, at least one of them are positive, such that
		\begin{equation*}
			\sum_{i=1}^{l-1}a_i (\Phi(r),\varepsilon\Phi(r)-u) H_i ({\kappa}-\varepsilon)=\sum_{j=l}^{k}b_j (\Phi(r),\varepsilon\Phi(r)-u)H_j ({\kappa}-\varepsilon);
		\end{equation*}
		\item[(ii)] $\Sigma$ is star-shaped, $1\leq l \leq k\leq n-1$ and there are nonnegative $C^1$-smooth functions $\{a_i\}_{i=0}^{l-1}$ and $\{b_j\}_{j=l}^{k}$ defined on $\mathbb{R}^2$, which satisfy $\partial_{1}a_i, \partial_{2}b_j \leq 0$ and $\partial_{2}a_i, \partial_{1}b_j \geq 0$, at least one of them are positive, such that
		\begin{equation*}
			\sum_{i=0}^{l-1}a_i (\Phi(r),\varepsilon\Phi(r)-u) H_i ({\kappa}-\varepsilon)=\sum_{j=l}^{k}b_j (\Phi(r),\varepsilon\Phi(r)-u)H_j ({\kappa}-\varepsilon).
		\end{equation*}
	\end{itemize}
	If $\partial_{2}a_i$ or $\partial_{2}b_j $ does not vanish at some point for some $0\le i\le l-1$ or $l\le j\le k$,  we further assume that $h_{ij}>\varepsilon g_{ij}$ on $\Sigma$. Then $\Sigma$ is a geodesic sphere.
\end{theorem}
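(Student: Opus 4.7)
The plan is to adapt the integral-formula strategy developed for Theorem \ref{warped product thm1} to the space form setting, where two simplifications become available: the shifted Hsiung--Minkowski identities hold for any closed hypersurface without a star-shaped assumption, and in case (i) the sign condition $\lambda' - \varepsilon u > 0$ on $\Sigma$ (together with $l \geq 2$) substitutes for star-shapedness as the positivity weight driving the integral manipulations.

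The first step is to derive the shifted Minkowski identities. Integrating the divergence of $P_{k-1}(X^T)$, where $X = \lambda \partial_r$ is the global conformal Killing field on each of $\mathbb{R}^{n+1}$, $\mathbb{H}^{n+1}$, $\mathbb{S}^{n+1}_+$ and $P_{k-1}$ is the Newton tensor built from the shifted Weingarten operator $h^j_i - \varepsilon \delta^j_i$, yields
\begin{equation*}
    \int_\Sigma (\lambda' - \varepsilon u)\, H_{k-1}(\kappa - \varepsilon)\, d\mu = \int_\Sigma u\, H_k(\kappa - \varepsilon)\, d\mu.
\end{equation*}
Because the ambient curvature is constant, no curvature error terms appear, and the formula holds for any closed immersed hypersurface with no star-shapedness required.

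In case (i) I multiply the linear-combination identity by $(\lambda' - \varepsilon u) > 0$ and integrate, then apply the shifted Minkowski formula iteratively to align the orders of all $H_i$'s and $H_j$'s to a common index. The monotonicity assumptions on $a_i$ (nonincreasing in $\Phi(r)$, nondecreasing in $\varepsilon\Phi(r)-u$) and on $b_j$ (the reverse signs), combined with the Newton--MacLaurin inequalities
\begin{equation*}
    H_{p-1}(\kappa-\varepsilon)\, H_{p+1}(\kappa-\varepsilon) \leq H_p(\kappa-\varepsilon)^2 \qquad (\kappa - \varepsilon \in \Gamma_k^+),
\end{equation*}
yield signed inequalities that chain into a tautology $0\le 0$, forcing equality at every step. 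Equality in the Newton--MacLaurin chain implies $\kappa_1 - \varepsilon = \cdots = \kappa_n - \varepsilon$ pointwise; the supplementary hypothesis $h_{ij} > \varepsilon g_{ij}$, invoked precisely when some $\partial_2 a_i$ or $\partial_2 b_j$ is nontrivial, ensures positivity of all shifted curvatures so that the equality case of Newton--MacLaurin is fully available. Case (ii) is handled analogously, with $u>0$ (from star-shapedness) playing the role of $\lambda' - \varepsilon u$ as the positive weight; the argument then mirrors Theorem \ref{warped product thm1} verbatim. Umbilicity of $\Sigma$ combined with the classification of umbilic hypersurfaces in space forms gives that $\Sigma$ is a geodesic sphere.

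The main obstacle I anticipate is justifying the correct direction of the Newton--MacLaurin chain in case (i) without star-shapedness: in the general warped product setting one typically exploits star-shapedness to normalize a radial foliation or to set up a conformal flow, whereas here the argument must rest entirely on the combination of the exact shifted Minkowski identity (available thanks to constant sectional curvature) and the hypothesis $\lambda' - \varepsilon u > 0$. A second, more technical point is the equality analysis when several of the $a_i$ or $b_j$ vanish or are merely nonnegative: one must track which terms can be dropped and argue that the surviving terms alone are enough to propagate umbilicity, which is exactly what the "at least one of them are positive" and the auxiliary $h_{ij} > \varepsilon g_{ij}$ hypotheses are designed to guarantee.
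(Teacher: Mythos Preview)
Your high-level strategy matches the paper's: in space forms the shifted second fundamental form is Codazzi, so $T_{k-1}(\tilde h)$ is divergence-free, the shifted Hsiung--Minkowski identities are exact (no $A_j$ error), and one then combines the weighted Minkowski formula with Newton--MacLaurin and the sign properties of the coefficients $a_i,b_j$. Case (ii) does proceed exactly as you say, mirroring Theorem~\ref{warped product thm1}. But your execution plan for case~(i) has a genuine gap.

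You propose to multiply the pointwise identity by $\lambda'-\varepsilon u$ and then ``iterate to a common index.'' Applying the weighted Minkowski formula to $\int_\Sigma \chi(\lambda'-\varepsilon u)H_m$ shifts the index \emph{up} to $\int_\Sigma \chi\,u\,H_{m+1}$, so at the end you are left needing $u>0$ --- precisely the star-shapedness that case~(i) does not assume. The paper goes the other direction: one writes $0=\int_\Sigma u\bigl(\sum_j b_jH_j-\sum_i a_iH_i\bigr)\,d\mu$ (zero pointwise, so no sign on $u$ is required here), applies the weighted Minkowski formula \emph{once} to each term to shift down from $uH_m$ to $(\lambda'-\varepsilon u)H_{m-1}$, and only then invokes $\lambda'-\varepsilon u>0$. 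The second ingredient you are missing is the pointwise Newton--MacLaurin consequence: from $H_iH_{j-1}\ge H_{i-1}H_j$ together with the hypothesis $\sum_i a_iH_i=\sum_j b_jH_j>0$ one deduces $\sum_{j} b_jH_{j-1}\ge \sum_{i} a_iH_{i-1}$ pointwise, and it is this (not a further integral manipulation) that makes the final integral nonnegative. Lastly, the extra assumption $h_{ij}>\varepsilon g_{ij}$ is not about the equality case of Newton--MacLaurin; it is required so that each $\kappa_p-\varepsilon>0$ gives the correct sign to the term $-\partial_2 a_i(\kappa_p-\varepsilon)$ (resp.\ $-\partial_2 b_j(\kappa_p-\varepsilon)$) in the boundary contribution of the weighted Minkowski formula.
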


For $\varepsilon=1$ and $M^{n+1}=\mathbb{H}^{n+1}$, it reduces to \cite[Theorem 1.2 and Theorem 4.1]{SWW24}. As a direct consequence of Theorem \ref{space forem thm1}(ii), we have the following

\begin{corollary}\label{cor1}
	Let $1\leq k< n$ be an integer and $\Sigma$ be a closed, star-shaped strictly shifted $k$-convex hypersurface in space form $M^{n+1}=\mathbb{R}^{n+1}$, $\mathbb{H}^{n+1}$ or $\mathbb{S}_{+}^{n+1}$. Assume that $\varepsilon\in\mathbb{R}$ is a constant. If there is positive $C^1$-smooth function $\chi$ defined on $\mathbb{R}^2$, which satisfies $\partial_{1}\chi \leq 0$ and $\partial_{2}\chi  \geq 0$, such that
	\begin{equation}
		{H_{k}({\kappa}-\varepsilon)}=\chi(\Phi(r),\varepsilon\Phi(r)-u).
	\end{equation}
	If $\partial_{2}\chi$  does not vanish at some point, we further assume that $h_{ij}>\varepsilon g_{ij}$ on $\Sigma$. Then $\Sigma$ is a geodesic sphere.
\end{corollary}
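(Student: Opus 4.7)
The plan is to observe that Corollary \ref{cor1} fits verbatim into Theorem \ref{space forem thm1}(ii) after a specific, rather tautological, choice of the coefficient functions, so no new analysis is needed beyond a careful matching of hypotheses. Concretely, I would take $l=k$ in Theorem \ref{space forem thm1}(ii), so that the left-hand sum runs over $i=0,\dots,k-1$ and the right-hand sum reduces to the single term $j=k$. I would then set
\[
a_0(s,t)=\chi(s,t),\qquad a_1\equiv\cdots\equiv a_{k-1}\equiv 0,\qquad b_k(s,t)\equiv 1.
\]
Since $H_0(\kappa-\varepsilon)=1$, the equation displayed in Theorem \ref{space forem thm1}(ii) collapses to
\[
\chi(\Phi(r),\varepsilon\Phi(r)-u)=H_k(\kappa-\varepsilon),
\]
which is exactly the prescribed relation in Corollary \ref{cor1}.

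Next I would verify the structural sign hypotheses required by Theorem \ref{space forem thm1}(ii). For $a_0=\chi$ the assumptions $\partial_1\chi\le 0$ and $\partial_2\chi\ge 0$ of the corollary translate directly into $\partial_1 a_0\le 0$ and $\partial_2 a_0\ge 0$. The remaining $a_i$ are identically zero, so their partial derivatives vanish and trivially satisfy the sign constraints. For $b_k\equiv 1$ one has $\partial_1 b_k=\partial_2 b_k=0$, which also meets the requirements $\partial_1 b_k\ge 0$ and $\partial_2 b_k\le 0$. The positivity clause ``at least one of them are positive'' is guaranteed by the assumed positivity of $\chi$, which gives $a_0>0$ everywhere. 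Finally, the extra hypothesis $h_{ij}>\varepsilon g_{ij}$ is imposed in Corollary \ref{cor1} precisely under the condition that $\partial_2\chi$ does not vanish identically, which matches the corresponding clause ``if $\partial_2 a_i$ or $\partial_2 b_j$ does not vanish at some point'' in the theorem.

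The other standing hypotheses of Theorem \ref{space forem thm1}(ii), namely that $\Sigma$ is closed, star-shaped, strictly shifted $k$-convex, with $1\le k\le n-1$, in a space form $\mathbb{R}^{n+1}$, $\mathbb{H}^{n+1}$, or $\mathbb{S}_+^{n+1}$, are identical to the hypotheses of Corollary \ref{cor1}; there is nothing to check. Invoking Theorem \ref{space forem thm1}(ii) then yields that $\Sigma$ is a geodesic sphere, completing the proof. The only conceivable obstacle is bookkeeping: making sure that the degenerate choices of $a_i$ and $b_j$ (many of them vanishing) are still admissible under the theorem's hypotheses, which is why I emphasize that vanishing functions automatically satisfy both sign conditions and that the positivity requirement is met by $\chi$ alone.
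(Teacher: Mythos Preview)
Your proposal is correct and matches the paper's own treatment: the paper introduces Corollary~\ref{cor1} simply as ``a direct consequence of Theorem~\ref{space forem thm1}(ii)'' without further argument, and your choice $l=k$, $a_0=\chi$, $a_1=\cdots=a_{k-1}=0$, $b_k\equiv 1$ is exactly the specialization intended. All the hypothesis-matching you carry out is accurate.
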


\begin{remark}
	Corollary \ref{cor1} illustrates that the conditions of static-convex and $\lambda'-\varepsilon u>0$ are unnecessary in \cite[Theorem 1.6]{LWX25} under the star-shapedness assumption. When $M^{n+1}=\mathbb{H}^{n+1}$, $\varepsilon=0$ and $\chi=1/\cosh(r)$, it was proved in \cite[Theorem 1.2]{W16}.
\end{remark}

In \cite{S60}, Stong established some rigidity results under the condition $H_{k-1}/H_{k}\geq c \geq H_{k-2}/H_{k-1}$ with constant $c$. As an application of Theorem \ref{space forem thm1}, we prove the following results for shifted higher order mean curvatures and generalize the constant $c$ to a function.

\begin{corollary}\label{coro3}
	Let $1<k\leq n$ be an integer and $\Sigma$ be a closed, star-shaped strictly shifted $k$-convex hypersurface in space form $M^{n+1}=\mathbb{R}^{n+1}$, $\mathbb{H}^{n+1}$ or $\mathbb{S}_{+}^{n+1}$. Assume that $\lambda'-\varepsilon u>0$ on $\Sigma$, where $\varepsilon\in\mathbb{R}$ is a constant. If there is a positive $C^1$-smooth function $\chi$ defined on $\mathbb{R}^2$, which satisfies $\partial_{1}\chi \geq 0$ and $\partial_{2}\chi  \leq 0$, such that
	\begin{equation}\label{pluscondition1}
		\frac{H_{k-1}({\kappa}-\varepsilon)}{H_{k}({\kappa}-\varepsilon)}\geq \chi(\Phi(r),\varepsilon\Phi(r)-u)\geq\frac{H_{k-2}({\kappa}-\varepsilon)}{H_{k-1}({\kappa}-\varepsilon)}.
	\end{equation}
	If $\partial_{2}\chi $ does not vanish at some point, we further assume that $h_{ij}>\varepsilon g_{ij}$ on $\Sigma$. Then $\Sigma$ is a geodesic sphere.
\end{corollary}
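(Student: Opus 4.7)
The plan is to reduce Corollary~\ref{coro3} to Theorem~\ref{space forem thm1}(i) by upgrading the sandwich~(\ref{pluscondition1}) to the pointwise identity $H_{k-1}(\kappa-\varepsilon) = \chi(\Phi(r),\varepsilon\Phi(r)-u)H_k(\kappa-\varepsilon)$ on $\Sigma$. Once this identity is established, Theorem~\ref{space forem thm1}(i) applies with $l=k$, $a_{k-1}\equiv 1$, $b_k\equiv \chi$, and all other $a_i,b_j\equiv 0$: the sign hypotheses $\partial_1 a_{k-1}=\partial_2 a_{k-1}=0$, $\partial_1 b_k=\partial_1\chi\ge 0$, $\partial_2 b_k=\partial_2\chi\le 0$ are satisfied, $a_{k-1}\equiv 1$ supplies positivity, and the conditional convexity assumption $h_{ij}>\varepsilon g_{ij}$ when $\partial_2\chi\not\equiv 0$ is inherited verbatim. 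The theorem then identifies $\Sigma$ as a geodesic sphere.

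For the upgrade to equality, I multiply the two rearranged inequalities
\[
H_{k-1}(\kappa-\varepsilon)-\chi H_k(\kappa-\varepsilon)\ge 0, \qquad \chi H_{k-1}(\kappa-\varepsilon)-H_{k-2}(\kappa-\varepsilon)\ge 0,
\]
by the positive weights $u$ (from star-shapedness) and $\lambda'-\varepsilon u$ (by hypothesis), respectively, integrate over $\Sigma$, and sum. Invoking the shifted Minkowski formula $\int_\Sigma [(\lambda'-\varepsilon u)H_{j-1}(\kappa-\varepsilon) - u H_j(\kappa-\varepsilon)]\,dA = 0$ at order $j=k-1$ to cancel the terms in $H_{k-2}(\lambda'-\varepsilon u)$ and $H_{k-1}u$, the total reduces to
\[
0 \le \int_\Sigma \chi\,\bigl[(\lambda'-\varepsilon u)H_{k-1}(\kappa-\varepsilon) - u H_k(\kappa-\varepsilon)\bigr]\,dA.
\]

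The reverse inequality would come from a weighted Minkowski identity obtained by integration by parts against $T_{k-1}^{ij}(\kappa-\varepsilon)\nabla_j\Phi$:
\[
\int_\Sigma \chi\bigl[(\lambda'-\varepsilon u)H_{k-1}(\kappa-\varepsilon) - u H_k(\kappa-\varepsilon)\bigr]\,dA = -C_{n,k}\int_\Sigma T_{k-1}^{ij}(\kappa-\varepsilon)\,\nabla_i\chi\,\nabla_j\Phi\,dA,
\]
with $C_{n,k}>0$. Using $\nabla_i\chi = \partial_1\chi\,\nabla_i\Phi + \partial_2\chi\,(\varepsilon\nabla_i\Phi-\nabla_i u)$ together with the warped-product identity $\nabla_i u = -h_i{}^j\nabla_j\Phi$ and diagonalizing in a principal frame of $\kappa-\varepsilon$, the right-hand integrand reads $\sum_i \sigma_{k-1}(\kappa-\varepsilon\,|\,i)(\nabla_i\Phi)^2\bigl[\partial_1\chi+(\kappa_i+\varepsilon)\partial_2\chi\bigr]$. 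When $\partial_2\chi\equiv 0$ this is manifestly $\ge 0$ and the right-hand side is $\le 0$; when $\partial_2\chi\not\equiv 0$, the hypothesis $h_{ij}>\varepsilon g_{ij}$ gives $\tilde h>0$, and one combines this with $\lambda'-\varepsilon u>0$ to fix the sign of the bracket. Both bounds force the integral to vanish, so each pointwise integrand from the previous step vanishes; in particular, $H_{k-1}(\kappa-\varepsilon) = \chi H_k(\kappa-\varepsilon)$ on $\Sigma$.

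The main obstacle is the sign control in the weighted Minkowski identity when $\partial_2\chi\not\equiv 0$: the bracket $\partial_1\chi+(\kappa_i+\varepsilon)\partial_2\chi$ mixes a nonnegative and a nonpositive term and is not signed by the stated hypotheses alone. Closing the argument will likely require absorbing part of the nonnegative contribution from the first step back into the weighted identity — in effect replacing the weight $\chi$ by a shifted version before integrating by parts — so that the decisive sign emerges from the interplay of $\tilde h>0$ and $\lambda'-\varepsilon u>0$.
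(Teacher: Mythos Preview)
Your overall strategy is correct and coincides with the paper's proof. The ``main obstacle'' you flag is an artifact of a sign error: the support function satisfies $\nabla_i u = h_i{}^j\nabla_j\Phi$, not $\nabla_i u = -h_i{}^j\nabla_j\Phi$. Indeed, $\nabla_i u = \langle\bar\nabla_{e_i}V,\nu\rangle + \langle V,\bar\nabla_{e_i}\nu\rangle = \lambda'\langle e_i,\nu\rangle + h_i{}^j\langle V,e_j\rangle = h_i{}^j\nabla_j\Phi$. Consequently, in a principal frame $\nabla_i(\varepsilon\Phi-u) = -(\kappa_i-\varepsilon)\nabla_i\Phi$, and the integrand in your weighted Minkowski boundary term becomes
\[
\sum_p (T_{k-1})^{pp}(\tilde h)\bigl[\partial_1\chi - (\kappa_p-\varepsilon)\,\partial_2\chi\bigr]\,|\nabla_p\Phi|^2,
\]
not the expression with bracket $\partial_1\chi + (\kappa_p+\varepsilon)\partial_2\chi$ that you wrote. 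With $\partial_1\chi\ge 0$, $\partial_2\chi\le 0$, and $\kappa_p-\varepsilon>0$ (the latter needed only when $\partial_2\chi\not\equiv 0$, which is precisely your additional hypothesis), each summand is nonnegative, so the right-hand side of the weighted identity is $\le 0$ directly. No absorption trick is required, and the assumption $\lambda'-\varepsilon u>0$ is not used in this step (it enters only as the positive weight in your first step). Combining the two bounds forces both pointwise nonnegative integrands from the first step to vanish; in particular $H_{k-1}(\kappa-\varepsilon)=\chi\,H_k(\kappa-\varepsilon)$ on $\Sigma$, and your reduction to Theorem~\ref{space forem thm1}(i) with $l=k$, $a_{k-1}\equiv 1$, $b_k=\chi$ completes the proof.

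The paper organizes the same ingredients as a single chain of inequalities that begins and ends with $\int_\Sigma H_{k-2}(\kappa-\varepsilon)(\lambda'-\varepsilon u)\,d\mu$, but the content is identical to your two-sided bound on $\int_\Sigma\chi[(\lambda'-\varepsilon u)H_{k-1}-uH_k]\,d\mu$.
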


Next, we investigate some rigidity results of non-linear form for static-convex domains in certain sub-static warped product manifolds (see Section \ref{sec2}). Here, we recall a closed smooth hypersurface $\Sigma \subset M^{n+1}$ is called \textit{static-convex} if its second fundamental form satisfies
\begin{equation}\label{static convex}
	h_{ij}\geq\frac{\bar{\nabla}_{\nu} \lambda'}{\lambda'}g_{ij}, \quad \text{everywhere on }\Sigma.
\end{equation}
Applying the Minkowski type inequality by Li and Xia \cite{LX19} for static-convex domains in sub-static warped product manifolds, Li, Wei and Xu \cite{LWX25} proved Heintze-Karcher type inequality involving a general shifted factor $\varepsilon\in \mathbb{R}$ (see Proposition \ref{space form hkp}). Combining the Heintze-Karcher type inequality and Minkowski type formulas, we prove the following theorem.

\begin{theorem}\label{warped product thm+2}
	Suppose $M^{n+1}=\left([0, \bar{r}) \times N^n (K), \bar{g}=d r^2+\lambda(r)^2 g_N\right)$ is a sub-static warped product manifold satisfying
	\begin{equation}\label{condition wp}
		\lambda'(r)^2 - \lambda^{\prime \prime}(r) \lambda(r) < K, \quad \forall r \in(0, \bar{r}),
	\end{equation}
	where $\left(N^{n}(K), g_N\right)$ is a compact manifold with constant sectional curvature $K$. Let $\Omega$ be a bounded domain with star-shaped, strictly shifted $k$-convex and static-convex boundary $\Sigma=\partial\Omega$ in $M^{n+1}$. Assume that $\lambda'>0$ and $\lambda'-\varepsilon u>0$ on $\Sigma$, where $\varepsilon\in\mathbb{R}$ is a constant. If there are nonnegative $C^1$-smooth functions $\{a_j\}_{j=1}^{k}$ and $\{b_j\}_{j=1}^{k}$ defined on $\mathbb{R}^2$, which satisfy $\partial_{1}a_j, \partial_{1}b_j \geq 0$ and $\partial_{2}a_j, \partial_{2}b_j \leq 0$, such that
	\begin{equation}\label{bjhj*}
		\begin{aligned}
			& \sum_{j=1}^{k}\bigg(a_j (\Phi(r),\varepsilon\Phi(r)-u) H_j ({\kappa}-\varepsilon) + b_j (\Phi(r),\varepsilon\Phi(r)-u) H_1 ({\kappa}-\varepsilon) H_{j-1} ({\kappa}-\varepsilon)\bigg) \\
			& = \eta (\Phi(r),\varepsilon\Phi(r)-u),
		\end{aligned}
	\end{equation}
	for some $C^1$-smooth positive function $\eta$ which satisfies $\partial_{1}\eta \leq 0$ and $\partial_{2}\eta \geq 0$. If $\partial_{2}a_j, \partial_{2}b_j$ or $\partial_{2}\eta$ does not vanish at some point for some $1\le j\le k$, we further assume that $h_{ij}>\varepsilon g_{ij}$ on $\Sigma$. Then $\Sigma$ is a slice $\{r_0\} \times N(K)$ for some constant $r_0 $.
\end{theorem}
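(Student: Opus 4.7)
The plan is to combine three tools: the shifted Heintze-Karcher inequality of Li-Wei-Xu (Proposition \ref{space form hkp}), the shifted Minkowski-type integral identities associated to the conformal Killing field $X = \lambda(r)\partial_r$, and the Newton-Maclaurin inequality $H_j(\kappa-\varepsilon) \le H_1(\kappa-\varepsilon)\,H_{j-1}(\kappa-\varepsilon)$ on the G\aa{}rding cone $\Gamma_k^+$. The static-convexity of $\Sigma$ together with $\lambda' > 0$ and $\lambda' - \varepsilon u > 0$ are exactly the hypotheses required to apply Proposition \ref{space form hkp}, yielding
\[
\int_\Sigma \frac{\lambda'-\varepsilon u}{H_1(\kappa-\varepsilon)}\,dA \;\ge\; \int_\Sigma u\,dA,
\]
with equality if and only if $\Sigma$ is totally umbilic.

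First I would use Newton-Maclaurin to rewrite (\ref{bjhj*}) as the pointwise estimate $\eta \le H_1(\kappa-\varepsilon)\sum_j (a_j+b_j)\,H_{j-1}(\kappa-\varepsilon)$, then divide by $H_1(\kappa-\varepsilon)>0$, multiply by $\lambda'-\varepsilon u>0$, and integrate over $\Sigma$. To the resulting right-hand side I would apply the weighted shifted Minkowski identity
\[
\int_\Sigma f\bigl[(\lambda'-\varepsilon u)H_{j-1}(\kappa-\varepsilon) - u\,H_j(\kappa-\varepsilon)\bigr]\,dA \;=\; -c_j\!\int_\Sigma \langle\nabla f,\,T_{j-1}(X^T)\rangle\,dA,
\]
obtained by integrating $\mathrm{div}_\Sigma(f\,T_{j-1}(X^T))$ over the closed hypersurface. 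Using $\nabla\Phi = X^T$ and the standard warped-product expression for $\nabla u$ in terms of $h(X^T)$ and $\lambda'$, the sign hypotheses $\partial_1 a_j,\partial_1 b_j \ge 0$ and $\partial_2 a_j,\partial_2 b_j \le 0$ force the term involving $f=a_j+b_j$ to be controlled in the favorable direction. After applying Newton-Maclaurin a second time to bound $\sum_j (a_j+b_j)\,u\,H_j$ above by $\sum_j (a_j H_j + b_j H_1 H_{j-1})\,u = \eta u$, the chain collapses to
\[
\int_\Sigma \frac{(\lambda'-\varepsilon u)\,\eta}{H_1(\kappa-\varepsilon)}\,dA \;\le\; \int_\Sigma u\,\eta\,dA.
\]
The reverse inequality follows from weighting Proposition \ref{space form hkp} by the positive function $\eta$; here the hypotheses $\partial_1\eta \le 0$ and $\partial_2\eta \ge 0$ are tailor-made to keep the auxiliary gradient term on the correct side. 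Every intermediate inequality must therefore be an equality, saturating Newton-Maclaurin pointwise and forcing $\Sigma$ to be totally umbilic. The curvature assumption $\lambda'^2 - \lambda''\lambda < K$ combined with umbilicity and the warped-product structure then forces $\Sigma$ to be a slice $\{r_0\}\times N(K)$ by the standard ODE uniqueness argument for umbilic hypersurfaces in a warped product.

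The main obstacle lies in the second step: rigorously verifying that $\langle\nabla(a_j+b_j),\,T_{j-1}(X^T)\rangle$ has the required sign after substituting the warped-product identity for $\nabla u$, which decomposes $\nabla(a_j+b_j) = (\partial_1 f + \varepsilon\partial_2 f)\,X^T - \partial_2 f\,\nabla u$ into pieces whose contractions with $T_{j-1}(X^T)$ must be individually controlled using the positivity of $T_{j-1}$ on $\Gamma_k^+$. One must also check that weighting Proposition \ref{space form hkp} by $\eta$ preserves the direction of inequality, for which the opposite monotonicities imposed on $a_j,b_j$ versus $\eta$ turn out to be essential. A secondary difficulty is the equality analysis when some $\partial_2 a_j$, $\partial_2 b_j$, or $\partial_2\eta$ is not identically zero: here the extra assumption $h_{ij} > \varepsilon g_{ij}$ ensures that $\kappa-\varepsilon$ lies in the interior of $\Gamma_k^+$, so saturation in Newton-Maclaurin genuinely yields pointwise umbilicity rather than a degenerate configuration on $\partial\Gamma_k^+$.
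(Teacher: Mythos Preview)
Your chain of inequalities in steps 1--5 is essentially correct and matches the paper's argument, but there is a genuine gap at the step where you claim to obtain the reverse inequality by ``weighting Proposition \ref{space form hkp} by the positive function $\eta$.'' The Heintze--Karcher inequality is \emph{not} a pointwise or Minkowski-type identity; it comes from a Reilly-type integral formula over the domain $\Omega$ and there is no mechanism for inserting a boundary weight $\eta$ into it. No ``auxiliary gradient term'' appears in its proof that the sign conditions on $\partial_1\eta,\partial_2\eta$ could control. The paper avoids this obstacle in one line: divide \eqref{bjhj*} by $\eta$ at the outset, so that the right-hand side becomes $1$. The point of the opposite monotonicity hypotheses on $\eta$ (namely $\partial_1\eta\le 0$, $\partial_2\eta\ge 0$) is precisely that the new coefficients $a_j/\eta$ and $b_j/\eta$ inherit the same sign conditions $\partial_1(\cdot)\ge 0$, $\partial_2(\cdot)\le 0$ as $a_j,b_j$. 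After this normalization, your chain yields $\int_\Sigma u\,d\mu \ge \int_\Sigma \frac{\lambda'-\varepsilon u}{H_1(\kappa-\varepsilon)}\,d\mu$ and the \emph{unweighted} Heintze--Karcher inequality closes the loop.

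Two smaller issues. First, your Minkowski identity is the space-form version; in a general warped product the Newton tensor $T_{j-1}(\tilde h)$ is not divergence-free, and the correct formula \eqref{wpwmki} carries extra terms $\sum_p A_p H_{j-2;p}(\tilde h)$ involving $A_p = -\tfrac{1}{n(n-1)}\operatorname{Ric}(e_p,\nu)\nabla_p\Phi$. The hypothesis $\lambda'^2 - \lambda''\lambda < K$ together with star-shapedness is what makes $A_p\ge 0$, and this is needed \emph{both} to get the inequality direction in your step 3 \emph{and} for the final rigidity step. Second, total umbilicity alone does not force $\Sigma$ to be a slice in an arbitrary warped product (think of off-center geodesic spheres in space forms). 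The paper's argument is that equality throughout the chain forces the $A_p$-integrals to vanish, hence $A_p\equiv 0$, hence $\nu \parallel \partial_r$ everywhere; your ``standard ODE uniqueness argument'' does not supply this.
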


When $\varepsilon=0$, Theorem \ref{warped product thm+2} reduces to \cite[Theorem 1]{KLP18}. The same result also applies to the space forms. For this case, if $b_j =0$ for all $j$, $1\le j\le k$ in (\ref{bjhj*}), then the star-shapedness assumption is unnecessary. When $\varepsilon=1$ and $M^{n+1}=\mathbb{H}^{n+1}$, Theorem \ref{warped product thm+2} reduces to \cite[Theorem 1.4]{SWW24}. When $\varepsilon=-1$ and $M^{n+1}=\mathbb{H}^{n+1}$, by using the unit normal flow in hyperbolic space $\mathbb{H}^{n+1}$, Li, Wei and Xu \cite{LWX25} proved a Heintze-Karcher type inequality for non mean-convex domains (see Proposition \ref{hkp}). As a application, we obtain the following similar result for non mean-convex domains in $\mathbb{H}^{n+1}$.

\begin{theorem}\label{thm+2}
	 Let $\Sigma$ be a closed star-shaped hypersurface in $\mathbb{H}^{n+1}$ with $H_k ({\kappa}+1)>0$. If there are nonnegative $C^1$-smooth functions $\{a_j\}_{j=1}^{k}$ and $\{b_j\}_{j=1}^{k}$ defined on $[1,\infty)\times(-\infty,0)$, which satisfy $\partial_{1}a_j, \partial_{1}b_j \geq 0$ and $\partial_{2}a_j, \partial_{2}b_j \leq 0$, such that
	\begin{equation}\label{bjhj}
		\begin{aligned}
			& \sum_{j=1}^{k}\bigg(a_j (\lambda',-\lambda'-u) H_j ({\kappa}+1) + b_j (\lambda',-\lambda'-u) H_1 ({\kappa}+1) H_{j-1} ({\kappa}+1)\bigg) \\
			& = \eta (\lambda',-\lambda'-u),
		\end{aligned}
	\end{equation}
	for some $C^1$-smooth positive function $\eta$ which satisfies $\partial_{1}\eta \leq 0$ and $\partial_{2}\eta \geq 0$. If $\partial_{2}a_j, \partial_{2}b_j$ or $\partial_{2}\eta$ does not vanish at some point for some $1\le j\le k$, we further assume that $h_{ij}>- g_{ij}$ on $\Sigma$. Then $\Sigma$ is a geodesic sphere.
\end{theorem}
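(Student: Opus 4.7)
The plan is to adapt the proof of Theorem \ref{warped product thm+2} to the non mean-convex setting in $\mathbb{H}^{n+1}$. The central tools are the Heintze-Karcher type inequality of Li-Wei-Xu (Proposition \ref{hkp}), which handles the case $\varepsilon=-1$ via the unit normal flow in hyperbolic space, together with the shifted Minkowski identity
\begin{equation*}
\int_\Sigma \bigl((\lambda' + u)\, H_{j-1}(\kappa+1) - u\, H_j(\kappa+1)\bigr)\, dA = 0, \qquad j=1, \dots, n,
\end{equation*}
and the Newton--MacLaurin inequalities applied to the shifted principal curvatures $\kappa + 1$, which become available once $h_{ij} > -g_{ij}$ is in force, or in a weaker form under just the positivity of $H_k(\kappa+1)$ inside the associated G\aa{}rding cone.

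First I would multiply the constitutive identity (\ref{bjhj}) by the test function $(\lambda' + u)/H_1(\kappa+1)$ and integrate over $\Sigma$. The $a_j$-terms produce $\int_\Sigma a_j (\lambda'+u) H_j(\kappa+1)/H_1(\kappa+1)\, dA$, which Newton--MacLaurin bounds above by $\int_\Sigma a_j (\lambda'+u) H_{j-1}(\kappa+1)\, dA$; the $b_j$-terms simplify directly to $\int_\Sigma b_j (\lambda'+u) H_{j-1}(\kappa+1)\, dA$. The shifted Minkowski identity then converts each such integral into one of the form $\int_\Sigma (a_j + b_j) u H_j(\kappa+1)\, dA$, modulo a nonnegative defect left over from the Newton--MacLaurin step.

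On the other side, Proposition \ref{hkp} furnishes a matching lower bound for $\int_\Sigma (\lambda'+u)/H_1(\kappa+1)\, dA$ with equality precisely on geodesic spheres. The monotonicity hypotheses $\partial_1 a_j, \partial_1 b_j \geq 0$, $\partial_2 a_j, \partial_2 b_j \leq 0$, $\partial_1 \eta \leq 0$, $\partial_2 \eta \geq 0$ let me compare the resulting pointwise contributions with those on a reference slice $\{r=r_0\}$: on a slice, $(\lambda', -\lambda'-u)$ is constant and (\ref{bjhj}) reduces to an algebraic balance. Chaining these estimates, the combined inequality forces both the Newton--MacLaurin defects and the Heintze-Karcher defect to vanish. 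Equality in either forces umbilicity, and a closed umbilic hypersurface in $\mathbb{H}^{n+1}$ is necessarily a geodesic sphere.

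The main obstacle I anticipate is the sign bookkeeping in the non mean-convex regime: because $\kappa_i > -1$ still allows $\kappa_i < 0$ and the second argument $-\lambda'-u$ of the coefficient functions is negative (they live on $[1,\infty)\times(-\infty,0)$), every monotonicity step has to be aligned with the orientation of the Heintze-Karcher inequality produced by the unit normal flow. A secondary technical point is that, to unlock the full Newton--MacLaurin chain starting from the hypothesis $H_k(\kappa+1) > 0$ rather than $H_1(\kappa+1)>0$, one needs to invoke G\aa{}rding's inequality inside $\Gamma_k^+$ and, whenever some $\partial_2 a_j$, $\partial_2 b_j$, or $\partial_2 \eta$ fails to vanish identically, the stronger pointwise hypothesis $h_{ij} > -g_{ij}$ so as to rule out the degenerate boundary case where the defects in Newton--MacLaurin are supported on a nontrivial set.
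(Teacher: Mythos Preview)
Your overall strategy---sandwich $\int_\Sigma u\,d\mu$ between the Heintze--Karcher bound from Proposition~\ref{hkp} and an opposite inequality obtained by combining Newton--MacLaurin with a Minkowski formula---matches the paper's, which simply re-runs the proof of Theorem~\ref{warped product thm+2} with (\ref{wmki}) in place of (\ref{wpwmki}) and Proposition~\ref{hkp} in place of Proposition~\ref{space form hkp}. However, your execution has a genuine gap.

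The unweighted identity $\int_\Sigma \bigl((\lambda'+u)H_{j-1}-uH_j\bigr)\,d\mu=0$ that you quote does \emph{not} convert $\int_\Sigma (a_j+b_j)(\lambda'+u)H_{j-1}\,d\mu$ into $\int_\Sigma (a_j+b_j)\,u\,H_j\,d\mu$ when $a_j,b_j$ are non-constant. One must use the \emph{weighted} Minkowski formula (\ref{wmki}), which produces an additional boundary term
\[
\frac{1}{jC_n^j}\int_\Sigma \sum_p (T_{j-1})^{pp}(\tilde h)\bigl(\partial_1\chi-\partial_2\chi\,(\kappa_p+1)\bigr)\lvert\nabla_p\Phi\rvert^2\,d\mu,
\]
and it is precisely the hypotheses $\partial_1 a_j,\partial_1 b_j\ge 0$, $\partial_2 a_j,\partial_2 b_j\le 0$ (together with $\kappa_p+1>0$) that force this term to be nonnegative, yielding $\int_\Sigma\chi(\lambda'+u)H_{j-1}\,d\mu\le\int_\Sigma\chi\,u\,H_j\,d\mu$. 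This is the actual role of the monotonicity assumptions; your ``comparison with a reference slice $\{r=r_0\}$'' does not accomplish this and is not needed anywhere in the argument.

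Two further points you do not address: first, one should normalise by dividing (\ref{bjhj}) through by $\eta$ at the outset, so that $a_j/\eta$ and $b_j/\eta$ inherit the required monotonicity (this is where $\partial_1\eta\le 0$, $\partial_2\eta\ge 0$ enter); second, after reaching $\int_\Sigma\sum_j(a_j+b_j)\,u\,H_j\,d\mu$ you still need one more Newton--MacLaurin step $b_jH_j\le b_jH_1H_{j-1}$ and the normalised identity to close the chain to $\int_\Sigma u\,d\mu$, which then meets the Heintze--Karcher bound $(n+1)\int_\Omega\lambda'\,dv=\int_\Sigma u\,d\mu$ and forces all equalities.
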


Notice that $H_k ({\kappa}+1)$ can be expressed as some linear combination of $H_i ({\kappa})$, i.e.
\begin{equation*}
	H_{k}({\kappa}+1) = \sum_{i=0}^{k}\binom{k}{i}H_{i}(\kappa).
\end{equation*}
Therefore, as a direct consequence of Theorem \ref{thm+2}, we have
\begin{corollary}\label{coro1}
Let $\Sigma$ be a closed hypersurface in $\mathbb{H}^{n+1}$ with $H_k ({\kappa}+1)>0$. If there are nonnegative constants $a_0$ and $\{a_j\}_{j=1}^{k}$, at least one of them not vanishing, such that
\begin{equation*}
a_0 - \sum_{j=1}^{k}a_{j}=\sum_{j=1}^{k} \sum_{l=0}^{k-j}\binom{l+j}{j}a_{l+j} H_j (\kappa),
\end{equation*}
then $\Sigma$ is a geodesic sphere.
\end{corollary}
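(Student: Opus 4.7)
The plan is to reduce Corollary \ref{coro1} to the special case of Theorem \ref{thm+2} in which all coefficient functions are constants, via a purely algebraic rewrite using the binomial expansion $H_j(\kappa+1) = \sum_{i=0}^{j}\binom{j}{i} H_i(\kappa)$ already noted in the paper just before the statement. Substituting this expansion into $\sum_{j=1}^k a_j H_j(\kappa+1)$ and swapping the order of summation, I would get
$$\sum_{j=1}^{k} a_j H_j(\kappa+1) \;=\; \sum_{j=1}^{k} a_j \;+\; \sum_{i=1}^{k} H_i(\kappa) \sum_{l=0}^{k-i}\binom{l+i}{i} a_{l+i},$$
where the inner sum arises from the change of index $j=l+i$. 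Comparing with the hypothesis of the corollary, this shows that the stated identity is exactly the much cleaner relation
$$\sum_{j=1}^{k} a_j\, H_j(\kappa+1) \;=\; a_0.$$

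Next, I would recognize this as an instance of relation (\ref{bjhj}) in Theorem \ref{thm+2}: choose the constant functions $a_j(s,t)\equiv a_j$ for $1\le j\le k$, $b_j\equiv 0$ for $1\le j\le k$, and $\eta(s,t)\equiv a_0$. Because all these functions are constants, the sign conditions $\partial_1 a_j,\partial_1 b_j\ge 0$, $\partial_2 a_j,\partial_2 b_j\le 0$, $\partial_1\eta\le 0$, $\partial_2\eta\ge 0$ hold trivially, and moreover $\partial_2 a_j$, $\partial_2 b_j$, and $\partial_2\eta$ vanish identically, so the additional pointwise convexity assumption $h_{ij}>-g_{ij}$ built into Theorem \ref{thm+2} is automatically unneeded here. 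The nonnegativity and non-triviality ("at least one of the $a_j$'s not vanishing") transfer verbatim, and the ambient hypothesis $H_k(\kappa+1)>0$ is already part of the corollary's statement. Invoking Theorem \ref{thm+2} then concludes that $\Sigma$ is a geodesic sphere.

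The only real obstacle is the index bookkeeping: one must carefully verify the rearrangement $\sum_{j=1}^k a_j \sum_{i=0}^j\binom{j}{i}H_i(\kappa) = \sum_{j=1}^k a_j + \sum_{i=1}^k H_i(\kappa)\sum_{l=0}^{k-i}\binom{l+i}{i}a_{l+i}$ and confirm that it matches the form in the corollary's hypothesis term by term. There is no analytic content beyond what is already packaged in Theorem \ref{thm+2}; the corollary is essentially a restatement of the linear ($b_j\equiv 0$), constant-coefficient case of that theorem, after undoing the binomial expansion $H_j(\kappa+1)=\sum_i\binom{j}{i}H_i(\kappa)$. The star-shapedness required to apply Theorem \ref{thm+2} is to be read as inherited from the invoked theorem.
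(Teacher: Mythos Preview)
Your approach is correct and matches the paper's own treatment: the corollary is indeed the constant-coefficient, $b_j\equiv 0$ instance of Theorem~\ref{thm+2} after the binomial rewrite $\sum_{j=1}^k a_j H_j(\kappa+1)=a_0$, and the paper presents it exactly this way (``as a direct consequence of Theorem~\ref{thm+2}'') without further proof. Two small points deserve tightening. First, to invoke Theorem~\ref{thm+2} you need $\eta=a_0>0$, which you do not check: the hypothesis $H_k(\kappa+1)>0$ together with the existence of an elliptic point forces $(\kappa+1)\in\Gamma_k^+$ (as noted in the proof of Theorem~\ref{thm+2}), so all $H_j(\kappa+1)>0$ and hence $a_0=\sum_j a_j H_j(\kappa+1)>0$ once some $a_j$ with $j\ge 1$ is positive (and if all such $a_j$ vanish, the identity forces $a_0=0$, contradicting non-triviality). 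Second, your final sentence on star-shapedness is not quite right: the corollary does \emph{not} assume it, and it is not ``inherited.'' Rather, as the paper remarks just before Theorem~\ref{thm+2}, in space forms the star-shapedness hypothesis becomes superfluous when all $b_j\equiv 0$, since the only place $u>0$ enters the argument---beyond the $A_j\ge 0$ term, which is absent in space forms via~(\ref{wmki})---is to multiply the Newton--Maclaurin inequality $H_1 H_{j-1}\ge H_j$ by $u\,b_j$.
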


\begin{remark}
	If $a_0 > \sum_{j=1}^{k}a_{j}$ and $H_k ({\kappa})>0$, then the conclusion follows from \cite[Theorem 11]{WX14}. Here, we only need $H_k ({\kappa}+1)>0$.
\end{remark}

In \cite[Theorem 3]{KLP18}, Kwong, Lee and Pyo proved a rigidity theorem for self-expanding solitons to the weighted generalized inverse curvature flow in $\R^{n+1}$
\begin{equation}
	\frac{d}{dt}X=\sum_{0 \leq i < j \leq n} a_{i,j}\left(\frac{H_{i}}{H_{j}}\right)^{\frac{1}{j-i}}\nu,
\end{equation}
where the weight functions $\left\{a_{i,j}(x)|0 \leq i < j \leq n\right\}$ are non-negative functions on the hypersurface satisfying $\sum_{0 \leq i < j \leq n}a_{i,j}(x)=1$. We next extend it to the warped product manifolds.

\begin{theorem}\label{thm+3}
Suppose $M^{n+1}=\left([0, \bar{r}) \times N^n (K), \bar{g}=d r^2+\lambda(r)^2 g_N\right)$ is a warped product manifold satisfying
$$\lambda'(r)^2 - \lambda^{\prime \prime}(r) \lambda(r) < K, \quad \forall r \in(0, \bar{r}),$$
where $\left(N^{n}(K), g_N\right)$ is a compact manifold with constant sectional curvature $K$. Let $\Sigma$ be a closed strictly shifted $k$-convex hypersurface in $M^{n+1}$ and $\left\{a_{i,j}(x)|0 \leq i < j \leq n\right\}$ be non-negative functions on $\Sigma$ satisfying $\sum_{0 \leq i < j \leq n}a_{i,j}(x)=1$, where $k=\max\left\{j|a_{i,j}>0\right.$ for some $\left.0 \leq i < j \leq n\right\}$. Assume that $\lambda'-\varepsilon u > 0$ on $\Sigma$, where $\varepsilon \in \mathbb{R}$ is a constant. If there exists a constant $\beta > 0$ satisfying
\begin{equation}\label{aij}
\sum_{0 \leq i < j \leq k} a_{i,j}\left(\frac{H_{i}({\kappa}-\varepsilon)}{H_{j}({\kappa}-\varepsilon))}\right)^{\frac{1}{j-i}} = \beta \frac{u}{\lambda'- \varepsilon u},
\end{equation}
then $\Sigma$ is a slice $\{r_0\} \times N(K)$ for some constant $r_0 $.

\end{theorem}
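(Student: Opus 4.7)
The plan is to combine the Newton-MacLaurin inequalities for the shifted principal curvatures with the shifted Minkowski-type identity
\begin{equation*}
\int_\Sigma (\lambda' - \varepsilon u)\, H_{j-1}(\kappa - \varepsilon)\, dA = \int_\Sigma u\, H_j(\kappa - \varepsilon)\, dA, \quad 1 \le j \le n,
\end{equation*}
which is one of the standing tools of this paper. The strategy is to sandwich the constant $\beta$ between $1$ and $1$ by testing the self-expander equation \eqref{aij} against two matched pointwise bounds, and then read off umbilicity from the equality case.

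The first step is to establish, for every $0 \le i < j \le k$ and every shifted curvature tuple $(\kappa-\varepsilon) \in \Gamma_k^+$, the pointwise two-sided bound
\begin{equation*}
\frac{1}{H_1(\kappa - \varepsilon)} \le \left(\frac{H_i(\kappa - \varepsilon)}{H_j(\kappa - \varepsilon)}\right)^{\frac{1}{j-i}} \le \frac{H_{k-1}(\kappa - \varepsilon)}{H_k(\kappa - \varepsilon)}.
\end{equation*}
The lower bound is a rearrangement of Maclaurin's inequality $H_1\ge H_j^{1/j}$. The upper bound uses Newton's inequality $H_{m-1}H_{m+1}\le H_m^2$, which makes $H_m/H_{m+1}$ nondecreasing in $m$, so the geometric mean $(H_i/H_j)^{1/(j-i)}$ of the consecutive ratios is bounded above by its largest entry $H_{j-1}/H_j\le H_{k-1}/H_k$. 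In both cases equality in the Newton-MacLaurin chain forces the shifted principal curvatures to coincide at the point, i.e.\ umbilicity. Multiplying by $a_{i,j}(x)$, summing over $0\le i<j\le k$, and using $\sum a_{i,j}=1$ together with \eqref{aij}, we obtain the pointwise sandwich
\begin{equation*}
\frac{1}{H_1(\kappa - \varepsilon)} \le \frac{\beta\, u}{\lambda' - \varepsilon u} \le \frac{H_{k-1}(\kappa - \varepsilon)}{H_k(\kappa - \varepsilon)}.
\end{equation*}
Since the left-hand side of \eqref{aij} is strictly positive (as some $a_{i,j}(x)>0$ at every $x$ and all $H_i(\kappa-\varepsilon)>0$ on $\Gamma_k^+$) and $\lambda'-\varepsilon u>0$, this simultaneously forces $u>0$ on $\Sigma$, so the hypersurface is implicitly star-shaped.

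Cross-multiplying the right inequality by $(\lambda'-\varepsilon u)H_k(\kappa-\varepsilon)>0$ and integrating together with the shifted Minkowski identity at level $j=k$ gives $\beta\int_\Sigma u H_k(\kappa-\varepsilon)\, dA \le \int_\Sigma u H_k(\kappa-\varepsilon)\, dA$; doing the analogous thing with the left inequality at level $j=1$ gives $\int_\Sigma u H_1(\kappa-\varepsilon)\, dA \le \beta\int_\Sigma u H_1(\kappa-\varepsilon)\, dA$. Both integrals are positive, so the two inequalities together force $\beta=1$. Substituting $\beta=1$ back into the sandwich, the pointwise Newton-MacLaurin inequalities must be equalities everywhere on $\Sigma$; since at every $x$ some $a_{i,j}(x)>0$, the equality case of Newton-MacLaurin at that active index forces $\Sigma$ to be totally umbilic at $x$. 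The standard rigidity of umbilic hypersurfaces in warped product manifolds satisfying $\lambda'(r)^2-\lambda''(r)\lambda(r)<K$---the same ingredient invoked in the proofs of Theorems \ref{warped product thm1} and \ref{warped product thm+2}---then identifies $\Sigma$ with a slice $\{r_0\}\times N(K)$. The main technical nuisance is the bookkeeping in the equality case: one must verify that equality at a single active index $(i,j)$ in $(H_i/H_j)^{1/(j-i)}=H_{k-1}/H_k$ actually propagates to equality in Newton's inequality on every consecutive triple $(H_{m-1},H_m,H_{m+1})$, so that the shifted principal curvatures genuinely coincide at the point.
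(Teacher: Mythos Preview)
Your overall strategy—sandwiching $\beta$ via the two-sided Newton--MacLaurin bound and the Minkowski formulas at levels $1$ and $k$—is exactly the paper's. But two points need repair.

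First, the shifted Minkowski relation you quote as an \emph{equality} for all $j$ is only exact at $j=1$. For $j\ge 2$, Lemma~\ref{wpmkl} reads
\[
\int_\Sigma u\,H_j(\kappa-\varepsilon)\,d\mu=\int_\Sigma(\lambda'-\varepsilon u)\,H_{j-1}(\kappa-\varepsilon)\,d\mu+(j-1)\int_\Sigma\sum_{p}A_p\,H_{j-2;p}(\tilde h)\,d\mu,
\]
with $A_p\ge 0$ once you have established $u>0$. The extra term does not hurt your $\beta\le 1$ step (the sign is favorable), but it is precisely what delivers the slice conclusion: when $\beta=1$, the chain $\int u H_k\le\int(\lambda'-\varepsilon u)H_{k-1}\le\int u H_k$ collapses, forcing both the pointwise Newton--MacLaurin equality \emph{and} $A_p\equiv 0$. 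The latter says $\nu\parallel\partial_r$ on $\Sigma$, hence $\Sigma$ is a slice. There is no ``standard rigidity of umbilic hypersurfaces'' available here—closed umbilic hypersurfaces in these warped products need not be slices (e.g.\ off-center spheres in $\mathbb{R}^{n+1}$). The ingredient in Theorems~\ref{warped product thm1} and \ref{warped product thm+2} that you cite is exactly this $A_p\equiv 0$ mechanism, not a black-box umbilic-implies-slice statement.

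Second, you do not address $k=1$. In that case your two bounds collapse to the single equality $1/H_1=\beta u/(\lambda'-\varepsilon u)$, and the Minkowski identity at level $1$ gives $\beta=1$ without any Newton--MacLaurin equality to invoke. The paper closes this gap separately: from $u/(\lambda'-\varepsilon u)=1/H_1$ and Newton's inequality $H_2\le H_1^2$ one gets $uH_2\le(\lambda'-\varepsilon u)H_1$ pointwise, and comparing with the level-$2$ Minkowski formula (which has the $A_p$ term) forces equality, hence umbilicity and $A_p\equiv 0$.
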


We also prove the following rigidity result for nonhomogeneous curvature functions.
\begin{theorem}\label{thm+4}
	Suppose $M^{n+1}=\left([0, \bar{r}) \times N^n (K), \bar{g}=d r^2+\lambda(r)^2 g_N\right)$ is a sub-static warped product manifold satisfying
	$$\lambda'(r)^2 - \lambda^{\prime \prime}(r) \lambda(r) < K, \quad \forall r \in(0, \bar{r}),$$
	where $\left(N^{n}(K), g_N\right)$ is a compact manifold with constant sectional curvature $K$. Let $\Omega$ be a bounded domain with smooth, strictly shifted $k$-convex and static-convex boundary $\Sigma=\partial\Omega$ in $M^{n+1}$. Assume that $\lambda'>0$ and $\lambda'-\varepsilon u>0$ on $\Sigma$, where $\varepsilon\in\mathbb{R}$ is a constant. If $\Sigma$ satisfies
	\begin{equation}\label{pluscondition2}
		\left(H_{k}(\kappa- \varepsilon)\right)^{-\alpha} = \frac{u}{\lambda'- \varepsilon u},
	\end{equation}
	for any fixed $k$ with $1\leq k\leq n$ and $\alpha\geq\frac{1}{k}$, then $\Sigma$ is a slice $\{r_0\} \times N(K)$ for some constant $r_0 $.
\end{theorem}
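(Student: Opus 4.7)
The plan is to combine the shifted Heintze--Karcher inequality of Li--Wei--Xu (Proposition \ref{space form hkp}), the shifted Minkowski identity, and the Newton--MacLaurin inequalities for $\kappa - \varepsilon \in \Gamma_k^+$. The relation \eqref{pluscondition2} is equivalent to
$$\lambda' - \varepsilon u \;=\; u\,H_k(\kappa - \varepsilon)^{\alpha} \quad \text{on } \Sigma,$$
and since $\lambda' - \varepsilon u > 0$ and $H_k(\kappa - \varepsilon) > 0$ by hypothesis, the support function $u$ is automatically positive, so $\Sigma$ is star-shaped and the hypotheses of Proposition \ref{space form hkp} are in force.

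First, Proposition \ref{space form hkp} applied to the static-convex $\Sigma$ gives
$$\int_\Sigma u\, dA \;\le\; \int_\Sigma \frac{\lambda' - \varepsilon u}{H_1(\kappa - \varepsilon)}\, dA,$$
with equality if and only if $\Sigma$ is a slice. Substituting the identity above and invoking the Newton--MacLaurin inequality $H_1(\kappa - \varepsilon) \ge H_k(\kappa - \varepsilon)^{1/k}$, I obtain
$$(\mathrm{I})\qquad \int_\Sigma u\bigl(H_k(\kappa - \varepsilon)^{\alpha - 1/k} - 1\bigr)\, dA \;\ge\; 0.$$
In parallel, the shifted Minkowski identity
$$\int_\Sigma (\lambda' - \varepsilon u)\,H_{k-1}(\kappa - \varepsilon)\, dA \;=\; \int_\Sigma u\,H_k(\kappa - \varepsilon)\, dA,$$
which comes from testing $\bar\nabla^2\Phi = \lambda'\bar g$ against the shifted Newton tensor $T_{k-1}(h - \varepsilon g)$ on the constant-curvature fiber, combined with the same substitution and the Newton--MacLaurin inequality $H_{k-1}(\kappa - \varepsilon) \ge H_k(\kappa - \varepsilon)^{(k-1)/k}$, yields
$$(\mathrm{II})\qquad \int_\Sigma u\,H_k(\kappa - \varepsilon)\bigl(H_k(\kappa - \varepsilon)^{\alpha - 1/k} - 1\bigr)\, dA \;\le\; 0.$$

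Subtracting (I) from (II) gives
$$\int_\Sigma u\bigl(H_k(\kappa - \varepsilon) - 1\bigr)\bigl(H_k(\kappa - \varepsilon)^{\alpha - 1/k} - 1\bigr)\, dA \;\le\; 0.$$
When $\alpha > 1/k$, the two factors $H_k - 1$ and $H_k^{\alpha - 1/k} - 1$ have the same sign pointwise and $u > 0$, so the integrand is nonnegative and must vanish identically; this forces $H_k(\kappa - \varepsilon) \equiv 1$ on $\Sigma$, after which (I) collapses to equality and in particular produces equality in Proposition \ref{space form hkp}, so $\Sigma$ is a slice. When $\alpha = 1/k$, the factor $H_k^{\alpha - 1/k} - 1$ vanishes identically, but already the chain
$$\int_\Sigma u\, dA \;\le\; \int_\Sigma \frac{u\,H_k(\kappa - \varepsilon)^{1/k}}{H_1(\kappa - \varepsilon)}\, dA \;\le\; \int_\Sigma u\, dA$$
that produced (I) forces equality in the Heintze--Karcher step, so again $\Sigma$ is a slice.

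The main difficulty is to pin down the precise form and the equality case of Proposition \ref{space form hkp} in this sub-static warped product setting, and to confirm its hypotheses really are met along $\Sigma$; once that is in hand, the remainder is a clean double application of Newton--MacLaurin together with the shifted Minkowski identity, and the threshold $\alpha \ge 1/k$ enters exactly so that the signs of $(H_k - 1)$ and $(H_k^{\alpha - 1/k} - 1)$ match and the subtraction step has the correct sign.
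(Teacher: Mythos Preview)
Your route is correct in spirit and genuinely different from the paper's: the paper derives \eqref{holder1} and \eqref{holder2} and then applies H\"older's inequality twice with carefully chosen exponents $p=\frac{k\alpha+k-1}{k}$ and $p=\frac{k\alpha+k-1}{k\alpha-1}$ to close the loop, whereas you bypass H\"older entirely by the subtraction trick $\int u(H_k-1)(H_k^{\alpha-1/k}-1)\le 0$ and a pointwise sign match. Your argument is more elementary and makes the threshold $\alpha\ge 1/k$ transparent.

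Two corrections are needed, however. First, what you call the ``shifted Minkowski identity'' is \emph{not} an identity in this warped product: the fiber having constant sectional curvature does not make the shifted Newton tensor divergence-free (that only happens in space forms). The correct statement is the formula \eqref{wpmkf} of Lemma~\ref{wpmkl}, which carries the extra term $(k-1)\int_\Sigma\sum_j A_j H_{k-2;j}(\tilde h)$; under the curvature hypothesis and the star-shapedness you established, $A_j\ge 0$ by \eqref{Aj}, so one only gets the inequality $\int_\Sigma(\lambda'-\varepsilon u)H_{k-1}\le\int_\Sigma uH_k$. Fortunately this is exactly the direction needed for your $(\mathrm{II})$, so the argument survives. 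Second, equality in Proposition~\ref{space form hkp} only yields ``umbilic with constant mean curvature,'' not ``slice''; to conclude $\Sigma$ is a slice you must also record that your chain for $(\mathrm{II})$ forces equality in the Minkowski step, hence $A_j\equiv 0$, hence $\nu\parallel\partial_r$. This is immediate from the chain (for both $\alpha>1/k$ after $H_k\equiv 1$, and for $\alpha=1/k$ directly), but you should say it rather than invoke Heintze--Karcher rigidity alone.
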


\begin{remark}
	We notice that hypersurfaces satisfying (\ref{pluscondition2}) are self-similar solutions to the curvature flow expanding by $H_k^{-\alpha}$ when $M^{n+1}=\mathbb{R}^{n+1}$ and $\varepsilon=0$. For static-convex $\Sigma$, Theorem \ref{thm+4} reduces to \cite[Theorem 3]{GM21} when $\varepsilon=0$. According to \cite[Theorem 1.1]{HWZ23} and Proposition \ref{hkp}, the static-convex assumption of $\Sigma$ is unnecessary when $\varepsilon=1,-1$ and $M^{n+1}=\mathbb{H}^{n+1}$.
\end{remark}

Finally, we consider the rigidity problems in a class of warped product spaces $M^{n+1}$, where $\left(N^{n}, g_N\right)$ does not have constant sectional curvature.

\begin{theorem}\label{glwthm}
	Suppose $M^{n+1}=\left([0, \bar{r}) \times N^n, \bar{g}=d r^2+\lambda(r)^2 g_N\right)$ is a sub-static warped product manifold satisfying
	\begin{equation}\label{glwcondition}
		\begin{aligned}
			&\operatorname{Ric}_N \geq (n-1) K g_N,\\
			&\lambda'(r)^2 - \lambda^{\prime \prime}(r) \lambda(r) < K, \quad \forall r \in(0, \bar{r}),
		\end{aligned}
	\end{equation}
	where $\left(N^{n}, g_N\right)$ is a closed manifold, $K > 0$ is a constant and $\operatorname{Ric}_N$ is the Ricci curvature of $g_N$. Let $\Omega$ be a bounded domain with static-convex boundary $\Sigma=\partial\Omega$ in $M^{n+1}$. Assume that $\lambda' > 0$ and $\lambda'-\varepsilon u>0$ on $\Sigma$, where $\varepsilon\in\mathbb{R}$ is a constant. If there are positive $C^1$-smooth functions $a$ and $b$ defined on $\mathbb{R}^2$, which satisfy $\partial_{1}a, \partial_{1}b \geq 0$ and $\partial_{2}a, \partial_{2}b \leq 0$, and $\Sigma$ satisfies one of the following conditions:
	\begin{itemize}
		\item[(i)] $\Sigma$ is strictly shifted mean convex and
		$
			a (\Phi(r),\varepsilon\Phi(r)-u)H_1 ({\kappa}-\varepsilon)
		$
		is constant;
		\item[(ii)] $\Sigma$ is star-shaped, strictly shifted $2$-convex and
		$
			a (\Phi(r),\varepsilon\Phi(r)-u)H_2 ({\kappa}-\varepsilon)
		$
		is constant;
		\item[(iii)] $\Sigma$ is star-shaped, strictly shifted $2$-convex and
		$
			a (\Phi(r),\varepsilon\Phi(r)-u)\frac{H_2 ({\kappa}-\varepsilon)}{H_1 ({\kappa}-\varepsilon)}
		$
		is constant;
		\item[(iv)] $\Sigma$ is star-shaped, strictly shifted $2$-convex and
		\begin{equation}\label{glw condition}
			\begin{aligned}
				&a (\Phi(r),\varepsilon\Phi(r)-u) H_2 ({\kappa}-\varepsilon) + b (\Phi(r),\varepsilon\Phi(r)-u) (H_1 ({\kappa}-\varepsilon))^2 \\
				&= \eta (\Phi(r),\varepsilon\Phi(r)-u),
			\end{aligned}
		\end{equation}
		for some $C^1$-smooth positive function $\eta$ which satisfies $\partial_{1}\eta \leq 0$ and $\partial_{2}\eta \geq 0$.
	\end{itemize}
	If $\partial_{2}a, \partial_{2}b$ or $ \partial_{2}\eta$ dose not vanish at some point, we further assume that $h_{ij}>\varepsilon g_{ij}$ on $\Sigma$. Then $\Sigma$ is a slice $\{r_0\} \times N$ for some constant $r_0 $.
\end{theorem}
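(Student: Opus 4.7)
The plan is to imitate the proof of Theorem \ref{warped product thm+2}, but to replace, wherever a Minkowski-type identity specific to constant sectional curvature fibers is invoked, a one-sided inequality that survives under $\operatorname{Ric}_N \geq (n-1)K g_N$. In all four cases the skeleton is the same: (a) the Li--Wei--Xu shifted Heintze--Karcher inequality of Proposition \ref{space form hkp}, which is valid for static-convex domains in sub-static warped products regardless of the fiber's sectional curvature; (b) a shifted Minkowski-type identity (for $k=1$) or Minkowski-type inequality (for $k=2$ in a non-constant-curvature fiber); and (c) Newton--MacLaurin inequalities for the $H_j(\kappa-\varepsilon)$. The monotonicity hypotheses on $a,b,\eta$ then force the resulting chain of inequalities to collapse into equalities, which throws $\Sigma$ into the equality case of Heintze--Karcher and thereby makes it umbilic; static-convexity together with $\lambda'-\varepsilon u > 0$ finally identifies $\Sigma$ with a slice.

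For case (i), the appropriate first-order Hsiung--Minkowski identity relating $\int_\Sigma u\,H_1(\kappa-\varepsilon)\,d\mu$ with $\int_\Sigma \lambda'\,d\mu$ and $\int_\Sigma \varepsilon u\,d\mu$ is a direct consequence of the conformal Killing identity $\bar\nabla^2\Phi = \lambda'\bar g$, and hence holds in any warped product without any assumption on the fiber. Combining it with Proposition \ref{space form hkp} and with $a(\Phi,\varepsilon\Phi-u)\,H_1(\kappa-\varepsilon)\equiv \mathrm{const}$, the monotonicity $\partial_1 a\geq 0$, $\partial_2 a\leq 0$ yields a reverse inequality which forces every step to be an equality; the equality case of Heintze--Karcher then gives umbilicity. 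In the branch $\partial_2 a\equiv 0$, the extra hypothesis $h_{ij}>\varepsilon g_{ij}$ rules out umbilic non-slice solutions.

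For cases (ii)--(iv), the second-order Minkowski identity must be replaced by a one-sided inequality. Commuting covariant derivatives in its derivation produces a fiber-curvature error term proportional to $\operatorname{Ric}_N - (n-1)K g_N$; the Ricci lower bound in \eqref{glwcondition} makes this term sign-definite, while $\lambda'(r)^2 - \lambda''(r)\lambda(r) < K$ absorbs the remaining ambient contribution with the correct sign. Combined with the Newton--MacLaurin inequality $H_1(\kappa-\varepsilon)^2 \geq H_2(\kappa-\varepsilon)$ (plus the natural rearrangements required for the quotient in (iii) and the quadratic in (iv)) and with Proposition \ref{space form hkp}, each of (ii)--(iv) reduces to an integral comparison of the same shape as case (i), and the equality analysis is identical.

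The main obstacle --- and the reason the theorem is restricted to $k\leq 2$ --- is exactly this higher-order Minkowski step: in a fiber without constant sectional curvature, the derivation of $\int_\Sigma (\lambda'-\varepsilon u)\,H_j(\kappa-\varepsilon)\,d\mu$ for $j \geq 3$ produces error terms involving the full Riemann tensor of $N^n$, which the Ricci-only hypothesis \eqref{glwcondition} is incapable of controlling. All the analytical work concentrates there; the remainder of the argument is structurally identical to the proof of Theorem \ref{warped product thm+2}.
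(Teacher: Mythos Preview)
Your outline matches the paper's proof: weighted first- and second-order Minkowski inequalities (the latter using $\operatorname{Ric}_N \geq (n-1)Kg_N$ and $\lambda'^2-\lambda''\lambda<K$ to control the divergence of $T_1(\tilde h)$), combined with Newton--MacLaurin and the Li--Wei--Xu Heintze--Karcher inequality, to force equality and hence umbilicity, followed by Brendle's argument that umbilic $\Sigma$ must be a slice under \eqref{glwcondition}. One small correction: the extra hypothesis $h_{ij}>\varepsilon g_{ij}$ is invoked precisely when $\partial_2 a$ (or $\partial_2 b$, $\partial_2\eta$) does \emph{not} vanish, to ensure the sign of $-\partial_2 a\,(\kappa_p-\varepsilon)$ in the boundary term after integration by parts---you have its role reversed; and the passage from umbilic to slice uses the curvature condition \eqref{glwcondition}, not static-convexity.
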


For the case of $\varepsilon=0$ in Theorem \ref{glwthm}, the convexity condition is superfluous since it is implied by the constancy of $H_k$.

\begin{theorem}\label{glwthm1}
	Suppose $M^{n+1}=\left([0, \bar{r}) \times N^n, \bar{g}=d r^2+\lambda(r)^2 g_N\right)$ is a sub-static warped product manifold satisfying
	\begin{equation}\label{glwcondition1}
		\begin{aligned}
			&\operatorname{Ric}_N \geq (n-1) K g_N,\\
			&\lambda'(r)^2 - \lambda^{\prime \prime}(r) \lambda(r) < K, \quad \forall r \in(0, \bar{r}),
		\end{aligned}
	\end{equation}
	where $\left(N^{n}, g_N\right)$ is a closed manifold, $K > 0$ is a constant and $\operatorname{Ric}_N$ is the Ricci curvature of $g_N$. Let $\Omega$ be a bounded domain with smooth boundary $\Sigma=\partial\Omega$ in $M^{n+1}$. Assume that $\lambda'(r) > 0$ for all $r\in(0,\bar{r})$, $\lambda'(0)=0$ and $\lambda''(0)>0$. If there exists positive $C^1$-smooth functions $a$ defined on $\mathbb{R}^2$, which satisfies $\partial_{1}a \geq 0$ and $\partial_{2}a \leq 0$, and $\Sigma$ satisfies one of the following conditions:
	\begin{itemize}
		\item[(i)]
		$
			a (\Phi(r),-u)H_1 ({\kappa})
		$
		is constant;
		\item[(ii)] $\Sigma$ is star-shaped and
		$
			a (\Phi(r),-u)H_2 ({\kappa})
		$
		is constant;
		\item[(iii)] $\Sigma$ is star-shaped with positive mean curvature and
		$
			a (\Phi(r),-u)\frac{H_2 ({\kappa})}{H_1 ({\kappa})}
		$
		is constant.
	\end{itemize}
	If $\partial_{2}a$ dose not vanish at some point, we further assume that $\Sigma$ is strictly convex. Then $\Sigma$ is a slice $\{r_0\} \times N$ for some constant $r_0 $.
\end{theorem}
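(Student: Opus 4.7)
The plan is to adapt the argument used for Theorem \ref{glwthm} to the $\varepsilon = 0$ setting, exploiting the fact that the strict-convexity hypothesis can be dropped in general and is needed only when $\partial_2 a \not\equiv 0$, because the equations in (i)--(iii) by themselves already pin down enough sign information on the mean curvature of $\Sigma$ to engage the Heintze--Karcher machinery in the sub-static warped product.

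The proof would rest on three ingredients: (a) the Minkowski-type identities
\begin{equation*}
\int_\Sigma \bigl(\lambda' H_{k-1}(\kappa) - u H_k(\kappa)\bigr)\,d\mu = 0, \qquad k = 1,2,
\end{equation*}
obtained from $\mathrm{div}_\Sigma(\lambda\partial_r^T) = n\lambda' - uH$ and its higher-order variants; (b) the Brendle--Li--Xia Heintze--Karcher inequality in sub-static warped products with $\operatorname{Ric}_N \geq (n-1)Kg_N$, which for mean-convex $\Sigma$ gives $\int_\Sigma \lambda'/H_1\,d\mu \geq \int_\Sigma u\,d\mu$ with equality forcing $\Sigma$ to be a slice $\{r_0\}\times N$, together with its higher-order analogues (valid on star-shaped $\Sigma \in \Gamma_2^+$) to be used in (ii)--(iii); and (c) the Newton--Maclaurin inequality $H_1^2 \geq H_2$ on the G\aa{}rding cone. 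For case (i), the equation $a H_1 = C > 0$ immediately yields $H_1 > 0$, so (b) applies; substituting $H_1 = C/a$ into both Minkowski and HK produces
\begin{equation*}
\int_\Sigma \lambda'\,d\mu = C\int_\Sigma u/a\,d\mu, \qquad \int_\Sigma \lambda' a\,d\mu \;\geq\; C \int_\Sigma u\,d\mu.
\end{equation*}
Subtracting $a_\ast$ times the first from the second, with $a_\ast$ the value of $a$ on a suitable reference slice, rearranges into $\int_\Sigma (a - a_\ast)(\lambda' - u H_1)\,d\mu \geq 0$; the monotonicity $\partial_1 a \geq 0,\ \partial_2 a \leq 0$ combined with the comparison of $(\Phi,u)$ on $\Sigma$ against the reference slice then forces the integrand to be sign-definite, and hence zero, pinching equality in (b) and identifying $\Sigma$ as a slice. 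Cases (ii) and (iii) follow the same template, using the $k=2$ Minkowski identity and the higher-order HK inequality $\int_\Sigma \lambda' H_1/H_2\,d\mu \geq \int_\Sigma u\,d\mu$ for $\Sigma \in \Gamma_2^+$ (for (iii) one additionally uses $H_1 > 0$ to invert the ratio $H_2/H_1$).

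The principal obstacle will be to place $\Sigma$ in the appropriate G\aa{}rding cone without any a priori convexity: mean-convexity in case (i) is immediate from $H_1 = C/a > 0$, but for (ii) and (iii) one must combine star-shapedness, positivity of $aH_2$ (resp.\ $aH_2/H_1$), and Newton--Maclaurin in a continuity/connectedness argument anchored at the smooth pole $\lambda'(0) = 0$, $\lambda''(0) > 0$, to ensure that $\Sigma \in \Gamma_2^+$ throughout. A secondary difficulty is that when $\partial_2 a$ does not vanish identically, the sign-definiteness of the comparison integrand $(a - a_\ast)(\lambda' - u H_1)$ requires extra control of the variation of $u$ on $\Sigma$; this is precisely where the supplementary strict-convexity hypothesis enters to close the argument, mirroring its role in Theorem \ref{glwthm}.
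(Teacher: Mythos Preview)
Your outline has a genuine gap at the step you call ``sign-definite integrand.'' After subtracting $a_\ast$ times the $k=1$ Minkowski identity from the Heintze--Karcher inequality you correctly arrive at
\[
\int_\Sigma (a-a_\ast)\bigl(\lambda' - uH_1\bigr)\,d\mu \;\geq\; 0,
\]
but there is no mechanism that forces the pointwise product $(a-a_\ast)(\lambda'-uH_1)$ to have a sign. The monotonicity of $a$ in $(\Phi,-u)$ tells you nothing about the sign of $\lambda'-uH_1$, which is a purely second-order quantity (the trace of $\nabla^2\Phi$); the two factors are simply uncorrelated. So the argument does not close, and no choice of ``reference slice'' repairs this.

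The paper's route avoids this difficulty by using the \emph{weighted} Minkowski inequality instead of the unweighted equality. One multiplies $\Delta\Phi=n(\lambda'-uH_1)$ by $a$ and integrates by parts:
\[
\int_\Sigma a(\lambda'-uH_1)\,d\mu \;=\; -\frac{1}{n}\int_\Sigma \langle\nabla a,\nabla\Phi\rangle\,d\mu \;=\; -\frac{1}{n}\int_\Sigma\sum_p\bigl(\partial_1 a-\partial_2 a\,\kappa_p\bigr)|\nabla_p\Phi|^2\,d\mu\;\leq\;0,
\]
the last inequality coming directly from $\partial_1 a\geq 0$, $\partial_2 a\leq 0$ (and $\kappa_p>0$ when $\partial_2 a\not\equiv 0$). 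Since $aH_1=C$, this reads $C\int_\Sigma u\geq\int_\Sigma a\lambda'=C\int_\Sigma \lambda'/H_1$, which is exactly the reverse of Brendle's Heintze--Karcher inequality and forces equality there. The monotonicity hypotheses on $a$ are consumed in the integration by parts, not in a pointwise comparison against a reference slice.

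Two further corrections for cases (ii)--(iii): first, because $N$ need not have constant sectional curvature, the $k=2$ Minkowski relation is only an \emph{inequality} here (the divergence of $T_1(\tilde h)$ contributes a nonnegative term via $\operatorname{Ric}_N\geq(n-1)Kg_N$), so your ingredient (a) as stated is false for $k=2$. Second, no higher-order Heintze--Karcher inequality is needed or available in this generality; one simply uses Newton--Maclaurin $H_1/H_2\geq 1/H_1$ to pass from $\int_\Sigma \lambda' H_1/H_2$ down to $\int_\Sigma \lambda'/H_1$ and then applies the order-one Heintze--Karcher. Positivity of $H_2$ (and $H_1$ in (iii)) follows immediately from the existence of an elliptic point on $\Sigma$ together with $a>0$, which makes the constant positive; no continuity argument anchored at $r=0$ is required.
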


\begin{remark}
	If $a=1$, case (i) of Theorem \ref{glwthm1} reduces to \cite[Theorem 1.1]{B13}. If $a = (\lambda')^\alpha$, Theorem \ref{glwthm1} says that it is not necessary to require that $\left(N^{n}, g_N\right)$ has constant sectional curvature for the case of $k=2$ in \cite[Theorem 1.3]{LWX14}.
	
\end{remark}

The paper is organized as follows: Section \ref{sec2} reviews some basic facts on the higher order mean curvatures. We provide some Minkowski type formulas and inequalities, which are the key tools of this paper. Sections \ref{sec3} and \ref{sec4} present proofs of the main results, Theorems \ref{warped product thm1}--\ref{thm+2}. Section \ref{sec5} focuses on the results of non-linear curvature conditions, Theorems \ref{thm+3} and \ref{thm+4}. In Section \ref{sec6}, we consider the rigidity results in more general warped product manifolds where the fiber does not have constant sectional curvature and show Theorems \ref{glwthm} and \ref{glwthm1}.

\section{Preliminaries}\label{sec2}
In this section, we begin by recalling some fundamental definitions and properties of elementary symmetric functions.

Let $\sigma_{k}$ be the $k$th elementary symmetry function $\sigma_{k}:\mathbb{R}^{n}\rightarrow\mathbb{R}$ defined by
$$
\sigma_k(\lambda)=\sum_{i_1<\cdots<i_k} \lambda_{i_1} \cdots \lambda_{i_k} \text { for } \lambda=\left(\lambda_1, \cdots, \lambda_{n}\right) \in \mathbb{R}^{n}.
$$
For a symmetric $n \times n$ matrix $A=\left(A_i^j\right)$, we set
\begin{equation}\label{sgmk}
	\sigma_k(A)=\frac{1}{k !} \delta_{j_1 \cdots j_k}^{i_1 \cdots i_k} A_{i_1}^{j_1} \cdots A_{i_k}^{j_k},
\end{equation}
where $\delta_{j_1 \cdots j_k}^{i_1 \cdots i_k}$ is the generalized Kronecker symbol.
If $\lambda(A)=\left(\lambda_1(A), \cdots, \lambda_n(A)\right)$ are the real eigenvalues of $A$, then
$$
\sigma_k(A)=\sigma_k(\lambda(A)) .
$$
We further introduce the notation
$$
\sigma_{k;j}(A):=\sigma_{k}(\lambda_1(A),\cdots,\lambda_{j-1}(A),\lambda_{j+1}(A),\cdots\lambda_{n}(A)), \quad \forall 1\leq k \leq n-1.
$$
The $k$th Newton transformation is defined as
$$
\left(T_k\right)_i^j(A)=\frac{\partial \sigma_{k+1}}{\partial A_j^i}(A)=\frac{1}{k !}\delta_{i i_1 \cdots i_{k}}^{j j_1 \cdots j_{k}} A_{j_1}^{i_1} \cdots A_{j_{k}}^{i_{k}}.
$$
For a diagonal matrix $A$ with eigenvalues $\lambda(A)=\left(\lambda_1(A), \cdots, \lambda_n(A)\right)$, this simplifies to
$$
\left(T_k\right)_i^j(A) = \frac{\partial \sigma_{k+1}}{\partial \lambda_i}(\lambda(A))\delta_{i}^{j}.
$$
The following formulas for the elementary symmetric functions are well-known.
\begin{lemma}[\cite{R74}]\label{tklemma}
	We have
	
	\begin{equation}\label{tk1}
		\sum_{i, j}\left(T_{k-1}\right)_{i}^{j}(A) A_{j}^{i}=k \sigma_{k}(A) .
	\end{equation}
	\begin{equation}\label{tk2}
		\sum_{i, j}\left(T_{k-1}\right)_{i}^{j}(A) \delta_{j}^{i}=(n+1-k) \sigma_{k-1}(A) .
	\end{equation}
\end{lemma}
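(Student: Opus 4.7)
The plan is to reduce both identities to the diagonal case and then prove them via elementary combinatorial manipulations of the symmetric functions. Since the matrix $(T_{k-1})_i^j(A)$ is a polynomial tensor in $A$, both sides of each identity are invariant under orthogonal conjugation $A \mapsto O^{-1}AO$, so after symmetrizing $A$ we may assume $A$ is diagonal with eigenvalues $\lambda = (\lambda_1,\dots,\lambda_n)$. In this case $(T_{k-1})_i^j(A) = \frac{\partial \sigma_k}{\partial \lambda_i}(\lambda)\,\delta_i^j = \sigma_{k-1;i}(\lambda)\,\delta_i^j$, where $\sigma_{k-1;i}$ denotes $\sigma_{k-1}$ in the $n-1$ variables obtained by deleting $\lambda_i$. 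Both left-hand sides then collapse to single sums over $i$.

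For \eqref{tk1}, the diagonal reduction gives $\sum_{i,j}(T_{k-1})_i^j(A) A_j^i = \sum_i \lambda_i\,\sigma_{k-1;i}(\lambda)$. I would now observe that each monomial $\lambda_{i_1}\cdots\lambda_{i_k}$ in $\sigma_k(\lambda)$ is produced exactly $k$ times in this sum, once for each choice of index $i \in \{i_1,\dots,i_k\}$; equivalently, this is just Euler's identity applied to the homogeneous polynomial $\sigma_k$ of degree $k$. Either viewpoint yields $k\sigma_k(\lambda) = k\sigma_k(A)$.

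For \eqref{tk2}, the diagonal reduction gives $\sum_{i,j}(T_{k-1})_i^j(A)\delta_j^i = \sum_{i=1}^n \sigma_{k-1;i}(\lambda)$. Each monomial $\lambda_{i_1}\cdots\lambda_{i_{k-1}}$ appearing in $\sigma_{k-1}(\lambda)$ contributes to $\sigma_{k-1;i}(\lambda)$ precisely when $i \notin \{i_1,\dots,i_{k-1}\}$, i.e.\ for $n-(k-1) = n+1-k$ choices of $i$. Summing over $i$ therefore produces each $(k-1)$-monomial exactly $n+1-k$ times, giving $(n+1-k)\sigma_{k-1}(\lambda) = (n+1-k)\sigma_{k-1}(A)$.

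No step looks like a genuine obstacle: the only thing to be careful about is the diagonal reduction, since strictly speaking the conjugation argument needs $A$ symmetric (which is assumed), and one should verify that the definition \eqref{sgmk} via the generalized Kronecker symbol really does coincide with $\sigma_k(\lambda(A))$ in the diagonal case — a straightforward expansion shows the antisymmetrization picks out exactly the sums $\sum_{i_1<\cdots<i_k}\lambda_{i_1}\cdots\lambda_{i_k}$. Alternatively, one can avoid diagonalization altogether and prove both identities directly from the Kronecker-symbol formulas, using $\delta^{i\,i_1\cdots i_{k-1}}_{j\,j_1\cdots j_{k-1}}\delta_i^j = (n+1-k)\delta^{i_1\cdots i_{k-1}}_{j_1\cdots j_{k-1}}$ for \eqref{tk2} and the corresponding contraction with $A_j^i$ for \eqref{tk1}; this variant is purely combinatorial and slightly cleaner, though less transparent.
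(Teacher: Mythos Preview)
Your argument is correct and complete. Note, however, that the paper does not supply its own proof of this lemma: it simply states the identities with a citation to \cite{R74}. There is therefore nothing to compare against beyond observing that your diagonalization-plus-counting argument (and the alternative Kronecker-symbol contraction you sketch) are both standard routes to these classical formulas.
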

For each $k=1,\cdots,n$, let
\begin{equation}\label{garding cone}
	\Gamma_{k}^{+}=\left\{\lambda \in \mathbb{R}^n: \sigma_i(\lambda)>0,1 \leq i \leq k\right\}
\end{equation}
be the G\r{a}rding cone. A symmetric matrix $A$ is said to belong to $\Gamma_k^{+}$ if its eigenvalues $\lambda(A) \in \Gamma_k^{+}$. We denote by
\begin{equation}
	H_k (\lambda)= \frac{1}{C_{n}^k}\sigma_k (\lambda)
\end{equation}
the normalized $k$th elementary symmetry function. As a convention, we take $H_0 = 1$, $H_{-1} = 0$. The following classical Newton-Maclaurin inequalities will be useful.
\begin{lemma}[\cite{G02}]
	 For $1 \leq l<k \leq n$ and $\lambda \in {\Gamma_{k}^{+}}$, the following inequalities hold:
	\begin{equation}\label{nm1}
		H_{k-1}(\lambda)H_{l}(\lambda) \geq H_{k}(\lambda)H_{l-1}(\lambda).
	\end{equation}
	\begin{equation}\label{nm2}
		H_{l}(\lambda) \geq H_{k}(\lambda)^{\frac{l}{k}}.
	\end{equation}
	Moreover, equality holds in (\ref{nm1}) or (\ref{nm2}) at $\lambda$ if and only if $\lambda=c(1,1, \cdots, 1)$.
\end{lemma}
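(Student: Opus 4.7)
The plan is to derive both (\ref{nm1}) and (\ref{nm2}) from the basic Newton inequality
\begin{equation*}
H_{j-1}(\lambda)\, H_{j+1}(\lambda) \leq H_{j}(\lambda)^{2}, \qquad 1 \leq j \leq n-1,
\end{equation*}
which holds for every $\lambda \in \mathbb{R}^{n}$, with equality if and only if $\lambda_1 = \cdots = \lambda_n$. I would establish Newton's inequality by induction on $n$. The base case $n=2$ is $(\lambda_1 - \lambda_2)^2 \geq 0$. For the inductive step, form $p(x) = \prod_{i=1}^n (x - \lambda_i)$; by Rolle's theorem $p'(x)/n$ has $n-1$ real roots $\mu_1, \ldots, \mu_{n-1}$, and a direct comparison of coefficients yields $H_j(\mu) = H_j(\lambda)$ for $0 \leq j \leq n-1$. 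Applying the inductive hypothesis to $\mu$ delivers Newton's inequality for $\lambda$ in the range $1 \leq j \leq n-2$. The missing terminal case $j = n-1$ follows from the reciprocal substitution $\tilde{\mu}_i = 1/\lambda_i$ (justified by a small perturbation when some $\lambda_i$ vanishes), which relates it back to the already-proved $j=1$ inequality for $\tilde{\mu}$.

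With Newton's inequality in hand, fix $\lambda \in \Gamma_k^{+}$, so that $H_0, H_1, \ldots, H_k > 0$, and set $R_j := H_j / H_{j-1}$ for $1 \leq j \leq k$. Newton's inequality rearranges to $R_{j+1} \leq R_j$, hence
\begin{equation*}
R_1 \geq R_2 \geq \cdots \geq R_k > 0.
\end{equation*}
For (\ref{nm1}), the monotonicity $R_l \geq R_k$ is exactly $H_{k-1}H_l \geq H_k H_{l-1}$. For (\ref{nm2}), use the telescoping identity $H_l = R_1 R_2 \cdots R_l$ (recall $H_0 = 1$) to rewrite the desired inequality $H_l \geq H_k^{l/k}$ as $(R_1 \cdots R_l)^k \geq (R_1 \cdots R_k)^l$; taking logarithms this reduces to
\begin{equation*}
(k-l)\sum_{j=1}^{l}\log R_j \;\geq\; l\sum_{j=l+1}^{k}\log R_j.
\end{equation*}
Since $\log R_1 \geq \cdots \geq \log R_l \geq \log R_{l+1} \geq \cdots \geq \log R_k$, the left-hand side is at least $l(k-l)\log R_l$ while the right-hand side is at most $l(k-l)\log R_l$, which closes the estimate.

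For the equality case, any equality in (\ref{nm1}) forces $R_l = R_{l+1} = \cdots = R_k$, while equality in (\ref{nm2}) forces $\log R_1 = \cdots = \log R_k$; in either situation some consecutive equality $R_j = R_{j+1}$ holds, which amounts to equality in Newton's inequality $H_{j-1}H_{j+1} = H_j^2$ and therefore forces $\lambda_1 = \cdots = \lambda_n$ by the rigidity of the base inequality. The main obstacle is precisely this rigidity analysis for Newton: tracing the Rolle step, if all $\mu_i$ coincide then $p'(x) = n(x-c)^{n-1}$, from which one must argue $p(x) = (x-c)^n$ (using that $p$ is monic of degree $n$ with only real roots, ruling out a nonzero additive constant), giving $\lambda_1 = \cdots = \lambda_n$; the terminal case $j = n-1$ requires parallel care, with the perturbation argument needed to handle vanishing $\lambda_i$ in the reciprocal substitution.
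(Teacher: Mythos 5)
The paper does not prove this lemma at all---it is quoted from \cite{G02} as a classical fact---so your proposal is supplying a proof where the paper gives only a citation. Your reduction of both \eqref{nm1} and \eqref{nm2} to the basic Newton inequality via the ratios $R_j=H_j/H_{j-1}$ is the standard argument and is carried out correctly: the monotone chain $R_1\ge\cdots\ge R_k>0$ immediately gives \eqref{nm1}, and your log-convexity/telescoping estimate for \eqref{nm2} is sound.

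There is, however, a genuine gap in the rigidity part. The equality condition you assert for the base Newton inequality over all of $\mathbb{R}^n$ --- ``equality if and only if $\lambda_1=\cdots=\lambda_n$'' --- is false. For example, $\lambda=(1,0,\dots,0)\in\mathbb{R}^n$ with $n\ge 3$ gives $H_{j-1}H_{j+1}=0=H_j^2$ for every $j\ge 2$ even though the entries are not all equal; more generally equality also occurs whenever enough entries vanish to force $H_j=H_{j+1}=0$. Consequently your induction cannot close as stated: the inductive hypothesis you invoke for the Rolle roots $\mu$ is not true, and the perturbation used to justify the reciprocal substitution in the terminal case $j=n-1$ only preserves the non-strict inequality, not the equality analysis. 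The repair is to carry the correct dichotomy (all entries equal, or at least $n-j+1$ entries vanish) through the induction, and then observe that in the present lemma $\lambda\in\Gamma_k^{+}$ and the Newton inequalities are only needed at levels $j\le k-1$, where $H_{j-1},H_j,H_{j+1}>0$; this positivity excludes the degenerate branch, and only then does equality in \eqref{nm1} or \eqref{nm2} force $\lambda=c(1,\dots,1)$. Your chain ``equality in \eqref{nm1} or \eqref{nm2} $\Rightarrow$ some consecutive $R_j=R_{j+1}$'' is fine; it is the final step from that to umbilicity that needs the positivity, not the unrestricted rigidity of Newton's inequality.
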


Next, we collect results for the warped product manifolds. Let $\left(N^{n}, g_N\right)$ be an $n$-dimensional compact Riemannian manifold. We consider the warped product manifold $M^{n+1}=[0, \bar{r}) \times N^n$ $(0 < \bar{r} \leq \infty)$ equipped with the metric
$$
\bar{g}=d r^2+\lambda(r)^2 g_N,
$$
where $\lambda:[0, \bar{r})\rightarrow\mathbb{R}$ is a smooth positive function. A Riemannian manifold $(\mathcal{M},g)$ which admits a nontrivial smooth solution $f$ to $\Delta_{\mathcal{M}}fg-\nabla^{2}_{\mathcal{M}}f+f\operatorname{Ric}_{\mathcal{M}}=0$ is called a {static} manifold and to $\Delta_{\mathcal{M}}fg-\nabla^{2}_{\mathcal{M}}f+f\operatorname{Ric}_{\mathcal{M}}\geq0$ is called a {sub-static} manifold, where $f$ is called the potential function. This name comes from general relativity since such manifolds are related to the concept of static spacetimes. As demonstrated in \cite[Proposition 2.1]{B13}, $M^{n+1}$ is a sub-static warped product manifold with potential $\lambda'(r)$ if and only if the following inequality holds for some constant $K\in\mathbb{R}$:
\begin{equation*}
	\begin{aligned}
	     \lambda' \left(\operatorname{Ric}_N - (n-1) K g_N\right) + \frac{1}{2} \lambda^3 \frac{d}{dr} \left(2 \frac{\lambda''}{\lambda} - (n-1) \frac{K-{\lambda'}^2}{\lambda^2}\right) \geq 0.
	\end{aligned}
\end{equation*}
Examples of sub-static manifolds include the space forms, Schwarzschild and Reissner-Nordstrom manifolds, see e.g. (\cite{B13,BHW16}).

Let $\Sigma$ be a smooth hypersurface in $M^{n+1}$. Denote $\bar{\nabla}$ and $\nabla$ as the Levi-Civita connection on $M^{n+1}$ and $\Sigma$, respectively. Let $\{e_i\}_{i=1}^{n}$ and $\nu$ be an orthonormal basis and the unit outward normal of $\Sigma$, respectively. Then the second fundamental form $h=\left(h_{i j}\right)$ of $\Sigma$ in $M^{n+1}$ is given by
$
h_{i j}=\left\langle\bar{\nabla}_{e_i} \nu, e_j\right\rangle .
$
The principal curvatures $\kappa=\left(\kappa_1, \cdots, \kappa_n\right)$ of $\Sigma$ are the eigenvalues of the Weingarten matrix $\left(h_i^j\right)=\left(g^{j k} h_{k i}\right)$, where $\left(g^{i j}\right)$ is the inverse matrix of $\left(g_{i j}\right)$. We define
$$
\Phi(r)=\int_{0}^{r}\lambda(s)ds.
$$
The vector field $V=\bar{\nabla}\Phi=\lambda\partial_{r}$ on $M^{n+1}$ is a conformal Killing field, i.e., $\bar{\nabla}(\lambda\partial_{r})=\lambda' \bar{g}$. We notice that $\overline{\operatorname{div}}(V)=(n+1)\lambda'$. A hypersurface $\Sigma$ in $M^{n+1}$ is called star-shaped if its support function
\begin{equation}\label{spf}
	u=\left\langle \lambda(r)\partial_{r}, \nu \right\rangle > 0.
\end{equation}

The following formulas hold for smooth hypersurfaces in $M^{n+1}$, see e.g.\cite{B13}.

\begin{lemma}
	Let $\Sigma$ be a smooth hypersurface in $M^{n+1}$. Then we have
	\begin{equation}\label{hession phi}
		\nabla_{i}\Phi=\langle V,e_{i}\rangle, \quad \nabla_{j}\nabla_{i}\Phi=\lambda'g_{ij}-uh_{ij},
	\end{equation}
	and the support function $u=\langle V,\nu\rangle$ satisfies
	\begin{equation}\label{hession u}
		\nabla_{i}u=\langle V,e_{k}\rangle h_{i}^{k}.
	\end{equation}
\end{lemma}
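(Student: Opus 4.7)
The plan is to derive all three identities directly from the Gauss and Weingarten formulas together with the conformal Killing property $\bar\nabla V = \lambda' \bar g$ recorded just above the lemma. Since the identities are local and tensorial, I may work at an arbitrary point of $\Sigma$ with an orthonormal frame $\{e_i\}$ of $T\Sigma$ and the unit normal $\nu$.

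For the first identity I use that $\Phi$ is a smooth function on $M^{n+1}$ with $V=\bar\nabla\Phi$, so its intrinsic gradient on $\Sigma$ is the tangential projection of its ambient gradient; thus $\nabla_i\Phi=e_i(\Phi)=\bar g(\bar\nabla\Phi,e_i)=\langle V,e_i\rangle$, which is immediate.

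For the Hessian on $\Sigma$, I expand
\[
\nabla_j\nabla_i\Phi=e_j\langle V,e_i\rangle-\langle V,\nabla_{e_j}e_i\rangle
=\langle\bar\nabla_{e_j}V,e_i\rangle+\langle V,\bar\nabla_{e_j}e_i-\nabla_{e_j}e_i\rangle.
\]
The first term equals $\lambda' g_{ij}$ by the conformal Killing identity $\bar\nabla_X V=\lambda' X$ (the pointwise form of $\bar\nabla V=\lambda'\bar g$). For the second, I recover the Gauss formula by differentiating $\langle\nu,e_i\rangle=0$ and using the definition $h_{ij}=\langle\bar\nabla_{e_i}\nu,e_j\rangle$, which gives $\langle\nu,\bar\nabla_{e_j}e_i\rangle=-h_{ij}$, hence $\bar\nabla_{e_j}e_i-\nabla_{e_j}e_i=-h_{ij}\nu$. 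Substituting and using $u=\langle V,\nu\rangle$ yields $\nabla_j\nabla_i\Phi=\lambda'g_{ij}-uh_{ij}$.

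For the support function, I differentiate $u=\langle V,\nu\rangle$ along $e_i$:
\[
\nabla_i u=\langle\bar\nabla_{e_i}V,\nu\rangle+\langle V,\bar\nabla_{e_i}\nu\rangle.
\]
The first term vanishes since $\bar\nabla_{e_i}V=\lambda' e_i$ is tangent to $\Sigma$. Differentiating $\langle\nu,\nu\rangle=1$ shows $\bar\nabla_{e_i}\nu$ is tangential, and its expansion in the frame is $\bar\nabla_{e_i}\nu=h_i^k e_k$ by the very definition of the Weingarten operator. This gives $\nabla_i u=\langle V,e_k\rangle h_i^k$. There is no serious obstacle here; the only delicate point is keeping the sign in the Gauss formula consistent with the paper's convention $h_{ij}=\langle\bar\nabla_{e_i}\nu,e_j\rangle$, which I have handled above.
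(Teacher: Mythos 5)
Your proof is correct and is the standard derivation; the paper itself does not prove this lemma but simply cites Brendle, and your argument (tangential projection of $\bar\nabla\Phi$, Gauss and Weingarten formulas, and the conformal Killing identity $\bar\nabla_X V=\lambda' X$) is exactly the computation that reference relies on, with the sign conventions handled consistently with $h_{ij}=\langle\bar\nabla_{e_i}\nu,e_j\rangle$.
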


\begin{lemma}\label{wpmkl}
	Let $\Sigma$ be a closed hypersurface in the warped product manifolds $ M^{n+1}=\left([0, \bar{r}) \times N^n (K), \bar{g}=d r^2+\lambda(r)^2 g_N\right)$, where $\left(N^{n}(K), g_N\right)$ is a compact manifold with constant sectional curvature $K$. For any $\varepsilon\in\mathbb{R}$, we have
	\begin{equation}\label{h1}
		\int_{\Sigma} u H_1({\kappa}-\varepsilon) d \mu=\int_{\Sigma}\left( \lambda'- \varepsilon u \right) d \mu;
	\end{equation}
	 and for $2\leq k\leq n$,
	\begin{equation}\label{wpmkf}
		\begin{aligned}
			\int_{\Sigma} u H_k({\kappa}-\varepsilon) d \mu=&\int_{\Sigma}\left( \lambda'- \varepsilon u \right) H_{k-1}({\kappa}-\varepsilon) d \mu+(k-1)\int_{\Sigma}\sum_{j=1}^{n}A_{j}H_{k-2;j}(\tilde{h})d \mu,
		\end{aligned}
	\end{equation}
	where $\tilde{h}_j^i=h_j^i- \varepsilon\delta_j^i$, $A_{j}=-\frac{1}{n(n-1)}\operatorname{Ric}(e_j,\nu)\nabla_{j}\Phi$ and $\operatorname{Ric}$ is the Ricci curvature of $\bar{g}$. Moreover, if $\Sigma$ is star-shaped and $(M,\bar{g})$ satisfies
	$$\frac{\lambda^{\prime \prime}(r)}{\lambda(r)}+\frac{K-\lambda^{\prime}(r)^2}{\lambda(r)^2}>0, \quad \forall r \in(0, \bar{r}),$$
	then
	\begin{equation}\label{Aj}
		A_j \geq 0, \quad \forall 1 \leq j \leq n.
	\end{equation}
\end{lemma}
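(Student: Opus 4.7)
The strategy is to contract the Hessian identity $\nabla_j\nabla_i\Phi=\lambda' g_{ij}-u h_{ij}$ from (\ref{hession phi}) against the $(k-1)$th Newton transformation of the shifted Weingarten tensor $\tilde h_j^i=h_j^i-\varepsilon\delta_j^i$, and integrate by parts on the closed hypersurface $\Sigma$. For $k=1$, tracing (\ref{hession phi}) on $\Sigma$ gives $\Delta_\Sigma \Phi = n\lambda' - n u H_1(\kappa)$; rewriting $H_1(\kappa)=H_1(\kappa-\varepsilon)+\varepsilon$ and applying the divergence theorem immediately yields (\ref{h1}).

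For $k\ge 2$, writing $h_j^i=\tilde h_j^i+\varepsilon\delta_j^i$ and invoking Lemma \ref{tklemma} one obtains, pointwise, $(T_{k-1})_i^j(\tilde h)\nabla_j\nabla_i\Phi = (n-k+1)(\lambda'-\varepsilon u)\sigma_{k-1}(\tilde h) - k u\,\sigma_k(\tilde h)$. Using the identity $(n-k+1)\binom{n}{k-1} = k\binom{n}{k}$, I then integrate $(T_{k-1})_i^j(\tilde h)\nabla_j\nabla_i\Phi = \nabla_j\bigl((T_{k-1})_i^j(\tilde h)\nabla_i\Phi\bigr) - \bigl[\nabla_j(T_{k-1})_i^j(\tilde h)\bigr]\nabla_i\Phi$ over $\Sigma$, so the proof reduces to evaluating $\nabla_j (T_{k-1})_i^j(\tilde h)$. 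Since $\varepsilon I$ is parallel, this equals $\nabla_j(T_{k-1})_i^j(h)$; combining the Codazzi equation $\nabla_i h_{jk}-\nabla_j h_{ik} = \bar R(\nu,e_k,e_i,e_j)$ with the antisymmetry of the generalized Kronecker symbol in the definition of $T_{k-1}$ produces a generalized Simons-type identity expressing this divergence as a single contraction of the ambient Riemann tensor against $T_{k-2}(\tilde h)$. For the warped product with constant-sectional-curvature fiber the Riemann tensor structure forces this contraction, when paired with $\nabla_i\Phi$, to collapse into $\operatorname{Ric}(e_j,\nu)\nabla_j\Phi$ in a frame diagonalizing $\tilde h$, which after tracking the combinatorial factors reassembles into $(k-1)\sum_j A_j H_{k-2;j}(\tilde h)$.

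For the positivity claim, I choose an orthonormal frame $\{e_j\}$ on $\Sigma$ diagonalizing $h$ and decompose $e_j = a_j\partial_r + X_j$ and $\nu = \alpha\partial_r + Y$ with $X_j, Y$ tangent to $N$. For the warped product with constant-sectional-curvature fiber the Ricci tensor is block diagonal in the splitting $TM = \mathbb{R}\partial_r\oplus TN$, with $\operatorname{Ric}(\partial_r,\partial_r) = -n\lambda''/\lambda$ and $\operatorname{Ric}|_{TN} = \bigl[(n-1)K/\lambda^2 - \lambda''/\lambda - (n-1)(\lambda')^2/\lambda^2\bigr]\bar g$. A direct computation, using $\bar g(X_j,Y) = -a_j\alpha$ from the orthogonality $\bar g(e_j,\nu)=0$, gives $\operatorname{Ric}(e_j,\nu) = -(n-1) a_j\alpha\bigl[\lambda''/\lambda + (K-(\lambda')^2)/\lambda^2\bigr]$. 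Combined with $\nabla_j\Phi = \lambda a_j$ and $u = \lambda\alpha$, this yields $A_j = \tfrac{a_j^2 u}{n}\bigl[\lambda''/\lambda + (K-(\lambda')^2)/\lambda^2\bigr]\ge 0$ whenever $u>0$ and the bracket is nonnegative---exactly the star-shapedness and ambient structural assumptions. The main obstacle is the bookkeeping in the middle paragraph: carrying out the generalized Simons identity and matching combinatorial coefficients so that the ambient curvature contraction reassembles precisely into the form $(k-1)\sum_j A_j H_{k-2;j}(\tilde h)$; once this identity is in hand, the remaining steps are routine.
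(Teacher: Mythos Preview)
Your overall strategy matches the paper's proof: contract the Hessian identity for $\Phi$ against $T_{k-1}(\tilde h)$, integrate by parts, and identify the divergence term via Codazzi; then compute $\operatorname{Ric}(e_j,\nu)$ from the explicit warped-product Ricci tensor to obtain $A_j\ge 0$. The positivity computation is correct and agrees with the paper's formula $A_j=\tfrac{1}{n}\bigl[\lambda''/\lambda+(K-\lambda'^2)/\lambda^2\bigr]\lambda^{-2}u\,|\nabla_j\Phi|^2$.

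One step is wrong, however. The claim ``since $\varepsilon I$ is parallel, $\nabla_j(T_{k-1})_i^j(\tilde h)=\nabla_j(T_{k-1})_i^j(h)$'' is false for $k\ge 3$. Parallelism of $\varepsilon I$ gives $\nabla\tilde h=\nabla h$, but $T_{k-1}(\tilde h)$ is a polynomial of degree $k-1$ in $\tilde h$, and its covariant derivative retains $k-2$ \emph{undifferentiated} factors of $\tilde h$, not $h$. Concretely,
\[
\nabla_j(T_{k-1})_i^j(\tilde h)=\frac{1}{(k-2)!}\,\delta_{ii_1\cdots i_{k-1}}^{jj_1\cdots j_{k-1}}(\nabla_j h_{j_1}^{i_1})\,\tilde h_{j_2}^{i_2}\cdots\tilde h_{j_{k-1}}^{i_{k-1}},
\]
which differs from the analogous expression with $h$'s in the undifferentiated slots. (For $k=2$ the claim happens to be true since $T_1(\tilde h)-T_1(h)$ is a constant multiple of the identity.)

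The fix is simply to drop this detour. Because $\nabla\tilde h=\nabla h$, the Codazzi identity $\nabla_i\tilde h_{jk}-\nabla_j\tilde h_{ik}=\bar R(\nu,e_k,e_i,e_j)$ holds verbatim for $\tilde h$, so the antisymmetry argument with the Kronecker symbol applies directly to the display above and produces, after the usual combinatorics (this is exactly the computation in \cite[Proposition~8]{BE13} applied to $\tilde h$),
\[
\nabla_j\Phi\,\nabla_i(T_{k-1})^{ij}(\tilde h)=-\frac{n+1-k}{n-1}\,(T_{k-2})^{ij}(\tilde h)\,\operatorname{Ric}(e_i,\nu)\,\nabla_j\Phi,
\]
with $T_{k-2}(\tilde h)$ appearing naturally---which is precisely what you need and what you wrote as your conclusion. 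So your endpoint is right; only the justification for that one sentence needs to be replaced.
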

\begin{proof}
	It follows from (\ref{hession phi}) that
		\begin{align}
			\sum_{i,j=1}^{n}\nabla_{i}\left(\nabla_{j}\Phi \left(T_{k-1}\right)^{ij}(\tilde{h})\right)=&\lambda'\sum_{i,j=1}^{n}\left(T_{k-1}\right)^{ij}(\tilde{h})g_{ij}-u\sum_{i,j=1}^{n}\left(T_{k-1}\right)^{ij}(\tilde{h})h_{ij} \label{wp2} \\
			&+\sum_{i,j=1}^{n}\nabla_{j}\Phi\nabla_{i}\left(\left(T_{k-1}\right)^{ij}(\tilde{h})\right) \notag \\
			=&(\lambda'-\varepsilon u)\sum_{i,j=1}^{n}\left(T_{k-1}\right)^{ij}(\tilde{h})g_{ij}-u\sum_{i,j=1}^{n}\left(T_{k-1}\right)^{ij}(\tilde{h})(h_{ij}-\varepsilon g_{ij} ) \notag \\
			&+\sum_{i,j=1}^{n}\nabla_{j}\Phi\nabla_{i}\left(\left(T_{k-1}\right)^{ij}(\tilde{h})\right) \notag \\
			=&(\lambda'-\varepsilon u)(n+1-k)\sigma_{k-1}(\kappa-\varepsilon)-uk\sigma_{k}(\kappa-\varepsilon) \notag \\
			&+\sum_{i,j=1}^{n}\nabla_{j}\Phi\nabla_{i}\left(\left(T_{k-1}\right)^{ij}(\tilde{h})\right) \notag ,
		\end{align}
	where in the last equality we used (\ref{tk1}) and (\ref{tk2}).
	
	If $k=1$, we have
	\begin{equation*}
		\sum_{i,j=1}^{n}\nabla_{j}\Phi\nabla_{i}\left(\left(T_{k-1}\right)^{ij}(\tilde{h})\right) = 0.
	\end{equation*}
	Thus integrating (\ref{wp2}) and applying the divergence theorem, we get (\ref{h1}).
	
	For $2 \leq k \leq n$, taking an orthogonal basis $\{e_{1}, \cdots,e_{n}\}$ such that $g_{ij}=\delta_{ij}$ and $h_{ij}=\kappa_{i}\delta_{ij}$. As in the proof of \cite[Proposition 8]{BE13}, we can get
	\begin{equation}\label{wp1}
		\begin{aligned}
			\sum_{i,j=1}^{n}\nabla_{j}\Phi\nabla_{i}\left(\left(T_{k-1}\right)^{ij}(\tilde{h})\right)=&-\frac{n+1-k}{n-1}\sum_{i,j=1}^{n}\left(T_{k-2}\right)^{ij}(\tilde{h})\operatorname{Ric}(e_i,\nu)\nabla_{j}\Phi\\
			=&-\frac{n+1-k}{n-1}C_{n-1}^{k-2}\sum_{j=1}^{n}H_{k-2;j}(\tilde{h})\operatorname{Ric}(e_j,\nu)\nabla_{j}\Phi.
		\end{aligned}
	\end{equation}
	Putting (\ref{wp1}) into (\ref{wp2}) and integrating by parts, we obtain (\ref{wpmkf}).
	
	Moreover, the Ricci curvature of $(M,\bar{g})$ is given by \cite[(2)]{B13}
	\begin{equation}\label{Ric}
		\begin{aligned}
			\operatorname{Ric} = & - \left(\frac{\lambda^{\prime \prime}(r)}{\lambda(r)} - (n-1) \frac{K-\lambda^{\prime}(r)^2}{\lambda(r)^2}\right) \bar{g} \\
			& -(n-1) \left(\frac{\lambda^{\prime \prime}(r)}{\lambda(r)}+\frac{K-\lambda^{\prime}(r)^2}{\lambda(r)^2}\right) d r^2.
		\end{aligned}
	\end{equation}
	From (\ref{Ric}), we know that
	$$
	\operatorname{Ric}\left(e_j, \nu\right)=-(n-1)\left(\frac{\lambda^{\prime \prime}(r)}{\lambda(r)}+\frac{K-\lambda^{\prime}(r)^2}{\lambda(r)^2}\right) \frac{1}{\lambda(r)^2} u \nabla_j \Phi ,
	$$
	which implies
	$$
	\begin{aligned}
		A_j & =-\frac{1}{n(n-1)}\operatorname{Ric}(e_j,\nu)\nabla_{j}\Phi \\
		& =\frac{1}{n}\left(\frac{\lambda^{\prime \prime}(r)}{\lambda(r)}+\frac{K-\lambda^{\prime}(r)^2}{\lambda(r)^2}\right) \frac{1}{\lambda(r)^2} u |\nabla_j \Phi|^2.
	\end{aligned}
	$$
	Using the star-shapedness (\ref{spf}) of $\Sigma$ and the assumption $$\frac{\lambda^{\prime \prime}(r)}{\lambda(r)}+\frac{K-\lambda^{\prime}(r)^2}{\lambda(r)^2}>0, \quad \forall r \in(0, \bar{r}),$$
    we conclude $A_j \geq 0$ for any $j=1, \cdots, n$.
	
\end{proof}

In order to prove the rigidity result on weighted shifted curvature functions later, we need to extend the above lemma to the following type.

\begin{lemma}\label{wppml}
	Let $\phi$ be a $C^1$-smooth function on a closed hypersurface $\Sigma$ in the warped product manifolds $ M^{n+1}=\left([0, \bar{r}) \times N^n (K), \bar{g}=d r^2+\lambda(r)^2 g_N\right)$, where $\left(N^{n}(K), g_N\right)$ is a compact manifold with constant sectional curvature $K$. For any $\varepsilon\in\mathbb{R}$, we have
	\begin{equation}\label{wppmk}
		\begin{aligned}
			\int_{\Sigma} \phi u H_k({\kappa}-\varepsilon) d \mu= & \int_{\Sigma} \phi(\lambda'-\varepsilon u) H_{k-1}({\kappa}-\varepsilon) d \mu +(k-1)\int_{\Sigma}\sum_{j=1}^{n}A_{j}\phi H_{k-2;j}(\tilde{h})d \mu\\
		   & +  \frac{1}{k C_{n}^k} \int_{\Sigma} \left(T_{k-1}\right)^{ij}(\tilde{h})\nabla_{i}\phi \nabla_j \Phi d \mu,
		\end{aligned}
	\end{equation}
	where $\tilde{h}_j^i=h_j^i- \varepsilon\delta_j^i$.
	Moreover, if $\phi=\chi(\Phi(r),\varepsilon\Phi(r)-u)$, where $\chi$ is a $C^1$-smooth function defined on $\mathbb{R}^2$, then we have
	\begin{equation}\label{wpwmki}
		\begin{aligned}
			&\int_{\Sigma}\chi(\Phi(r),\varepsilon\Phi(r)-u) u H_k({\kappa}-\varepsilon) d \mu\\
			=&\int_{\Sigma} \chi(\Phi(r),\varepsilon\Phi(r)-u)(\lambda'-\varepsilon u) H_{k-1}({\kappa}-\varepsilon) d \mu  \\
			&+(k-1)\int_{\Sigma}\sum_{j=1}^{n}A_{j}\chi(\Phi(r),\varepsilon\Phi(r)-u) H_{k-2;j}(\tilde{h})d \mu\\
			& +  \frac{1}{k C_{n}^k} \int_{\Sigma} \left(T_{k-1}\right)^{ij}(\tilde{h})\left(\partial_{1}\chi\nabla_{i}\Phi+\partial_{2}\chi\nabla_{i}(\varepsilon\Phi-u)\right) \nabla_j \Phi d \mu.
		\end{aligned}
	\end{equation}
\end{lemma}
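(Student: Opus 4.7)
The plan is to mimic the proof of Lemma \ref{wpmkl}, but insert the weight $\phi$ into the divergence identity and keep track of the extra gradient term that appears when the derivative falls on $\phi$. Concretely, I would start from the weighted vector field
$$W^{i}:=\phi\,\nabla_{j}\Phi\,(T_{k-1})^{ij}(\tilde h),$$
whose total divergence integrates to zero over the closed hypersurface $\Sigma$. Expanding
$$\nabla_{i}W^{i}=\nabla_{i}\phi\,\nabla_{j}\Phi\,(T_{k-1})^{ij}(\tilde h)+\phi\,\nabla_{i}\bigl(\nabla_{j}\Phi\,(T_{k-1})^{ij}(\tilde h)\bigr)$$
reduces the problem to a term that is already controlled by the computation in Lemma \ref{wpmkl}, plus the genuinely new piece $\nabla_{i}\phi\,\nabla_{j}\Phi\,(T_{k-1})^{ij}(\tilde h)$.

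For the inner part $\phi\,\nabla_{i}(\nabla_{j}\Phi(T_{k-1})^{ij}(\tilde h))$ I would invoke exactly the chain of identities already set up in the proof of Lemma \ref{wpmkl}: the Hessian formula $\nabla_{i}\nabla_{j}\Phi=\lambda' g_{ij}-uh_{ij}$ together with (\ref{tk1})--(\ref{tk2}) produces
$$(\lambda'-\varepsilon u)(n{+}1{-}k)\sigma_{k-1}(\kappa-\varepsilon)-uk\sigma_{k}(\kappa-\varepsilon),$$
while the divergence $\nabla_{i}(T_{k-1})^{ij}(\tilde h)$ is rewritten via (\ref{wp1}) as a Ricci contraction, giving rise to the $A_{j}$-term after using $A_j=-\tfrac{1}{n(n-1)}\operatorname{Ric}(e_j,\nu)\nabla_j\Phi$. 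Setting $\int_{\Sigma}\nabla_{i}W^{i}\,d\mu=0$ and solving for $\int_{\Sigma}\phi uk\sigma_{k}(\kappa-\varepsilon)\,d\mu$, then dividing by $kC_{n}^{k}=(n{+}1{-}k)C_{n}^{k-1}$ to convert $\sigma_{k},\sigma_{k-1}$ into $H_{k},H_{k-1}$, yields (\ref{wppmk}). The only bookkeeping step is checking that the combinatorial factor in front of the Ricci/$A_j$-term simplifies correctly: one has
$$\frac{1}{kC_{n}^{k}}\cdot\frac{n{+}1{-}k}{n-1}\,C_{n-1}^{k-2}\cdot n=(k-1),$$
using $C_{n-1}^{k-2}/C_{n}^{k}=k(k-1)/(n(n-k+1))$, which is precisely the coefficient appearing in (\ref{wppmk}).

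For the second formula (\ref{wpwmki}), I would simply substitute $\phi=\chi(\Phi(r),\varepsilon\Phi(r)-u)$ into (\ref{wppmk}) and expand $\nabla_{i}\phi$ by the chain rule,
$$\nabla_{i}\phi=\partial_{1}\chi\,\nabla_{i}\Phi+\partial_{2}\chi\,\nabla_{i}(\varepsilon\Phi-u),$$
inserting this into the gradient term $\tfrac{1}{kC_{n}^{k}}\int_{\Sigma}(T_{k-1})^{ij}(\tilde h)\nabla_{i}\phi\,\nabla_{j}\Phi\,d\mu$. No further geometric computation is needed.

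I do not expect any real obstacle: the entire argument is a direct weighting of the previous lemma, and the potentially delicate point is purely arithmetic, namely verifying that the combinatorial factor collapses to $k-1$. Once that identity is in place, the divergence theorem does all the remaining work.
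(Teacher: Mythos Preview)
Your approach is correct and essentially identical to the paper's: they combine (\ref{wp2}) and (\ref{wp1}) into a single pointwise identity, multiply by $\phi$, and integrate by parts---which is exactly your divergence computation for $W^{i}=\phi\,\nabla_{j}\Phi\,(T_{k-1})^{ij}(\tilde h)$. One minor arithmetic slip: in your displayed combinatorial check the factor coming from $\operatorname{Ric}(e_j,\nu)\nabla_j\Phi=-n(n-1)A_j$ is $n(n-1)$, not just $n$, so the identity should read $\tfrac{1}{kC_n^k}\cdot\tfrac{n+1-k}{n-1}\,C_{n-1}^{k-2}\cdot n(n-1)=k-1$; the final coefficient $k-1$ is correct.
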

\begin{proof}
	Combining (\ref{wp2}) and (\ref{wp1}) together, we have
	\begin{equation}\label{wp3}
		\begin{aligned}
			\frac{1}{kC_{n}^k}\nabla_{i}\left(\nabla_{j}\Phi \left(T_{k-1}\right)^{ij}(\tilde{h})\right)
			=&(\lambda'-\varepsilon u)H_{k-1}(\kappa-\varepsilon)-uH_{k}(\kappa-\varepsilon)\\
			&-\frac{n+1-k}{k(n-1)}\frac{C_{n-1}^{k-2}}{C_{n}^{k}}\sum_{j=1}^{n}H_{k-2;j}(\tilde{h})\operatorname{Ric}(e_j,\nu)\nabla_{j}\Phi.
		\end{aligned}
	\end{equation}
	Multiplying above equation by the function $\phi$, and integrating by parts, one obtain the desired result (\ref{wppmk}).
\end{proof}

Finally, we need the Heintze-Karcher type inequalities due to Li, Wei and Xu \cite{LWX25}.
\begin{proposition}[\cite{LWX25}]\label{hkp}
	 Let $\Omega$ be a bounded domain with smooth boundary $\Sigma=\partial\Omega$ in hyperbolic space $\mathbb{H}^{n+1}$. Assume that the mean curvature of $\Sigma$ satisfies $H_{1}(\kappa)>-1$, then
	\begin{equation}\label{hkf}
		\int_{\Sigma} \frac{\lambda'+u}{H_{1}(\kappa)+1} d \mu \geq (n+1) \int_{\Omega} \lambda' d v .
	\end{equation}
	 Equality holds in (\ref{hkf}) if and only if $\Sigma$ is umbilic.
\end{proposition}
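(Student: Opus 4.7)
My approach would follow the Montiel--Ros/Brendle parametrization strategy for Heintze--Karcher inequalities, specialized to the shifted setting via the inward unit normal flow in $\mathbb{H}^{n+1}$. Parametrize a full-measure subset of $\Omega$ by $F(x,t)=\exp_x(-t\nu(x))$ for $(x,t)\in \Sigma\times[0,T(x))$, where $T(x)$ is the cut time along the inward normal geodesic. Standard Jacobi-field computation in a space form of constant curvature $-1$ yields the factorized Jacobian $J(x,t)=\prod_{i=1}^n(\cosh t-\kappa_i(x)\sinh t)$, and the coarea formula gives
$$\int_\Omega \lambda'\,dv = \int_\Sigma\int_0^{T(x)} \lambda'(r(x,t))\,J(x,t)\,dt\,d\mu(x).$$

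Next I would exploit the fact that $V=\lambda\partial_r$ is conformal Killing and $\lambda''=\lambda$ in $\mathbb{H}^{n+1}$ to derive the linear ODE system $\frac{d}{dt}\lambda'(r)=-u^{(t)}$, $\frac{d}{dt}u^{(t)}=-\lambda'(r)$ along the inward flow, where $u^{(t)}$ denotes the support function of the parallel hypersurface at $F(x,t)$. Integrating yields the explicit formulas $\lambda'(r(x,t))=\lambda'(x)\cosh t-u(x)\sinh t$ and $(\lambda'+u^{(t)})(t)=(\lambda'+u)(x)e^{-t}$. The shifted combination $\lambda'+u$ is thus an eigenfunction of the flow, mirroring the role that $\lambda'$ plays in the classical mean-convex case and naturally explaining the numerator of \eqref{hkf}. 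The AM--GM inequality applied to the Jacobian then gives
$$J(x,t)^{1/n}\leq \cosh t - H_1(\kappa)\sinh t$$
on $[0,T(x)]$, where all factors remain positive, with equality iff $\Sigma$ is umbilic at $x$.

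Combining these ingredients, the proof reduces to the one-dimensional integral identity
$$(n+1)\int_0^{t^*(x)}\bigl[\lambda'(x)\cosh t - u(x)\sinh t\bigr]\bigl(\cosh t - H_1\sinh t\bigr)^n dt=\frac{\lambda'(x)+u(x)}{H_1(x)+1},$$
where $t^*(x)$ is the first zero of $\cosh t-H_1\sinh t$ when $H_1>0$. This identity can be verified by integration by parts using $\frac{d}{dt}(\cosh t-H_1\sinh t)^{n+1}=(n+1)(\cosh t-H_1\sinh t)^n(\sinh t-H_1\cosh t)$ together with the ODE system above, and it is consistent with the direct calculation on geodesic spheres where equality in \eqref{hkf} is attained. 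The main obstacle will be justifying the extension from the geometric cut time $T(x)$ to the auxiliary $t^*(x)$: one argues that $T(x)\leq t^*(x)$ so that extending the upper limit only enlarges the integral, the integrand $\lambda'(r(x,t))(\cosh t-H_1\sinh t)^n$ remaining nonnegative on $[0,t^*(x)]$; a case distinction is needed to handle $-1<H_1\leq 0$, where the hypothesis $H_1+1>0$ still ensures positivity of the relevant denominator but the quantity $t^*(x)$ requires a more careful interpretation, for instance by truncating at a suitable geometric scale. The equality case is then obtained by tracking back through the AM--GM step: equality there forces $\kappa_1=\cdots=\kappa_n$ everywhere on $\Sigma$, hence $\Sigma$ is totally umbilic and so a geodesic sphere by the classical classification.
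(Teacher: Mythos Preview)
The paper does not supply its own proof of this proposition: it is quoted verbatim from \cite{LWX25} and used as a black box. So there is no ``paper's proof'' to compare against beyond the single remark in the introduction that Li--Wei--Xu obtain it ``by using the unit normal flow in hyperbolic space''. Your general strategy (inward normal parametrization, Jacobi-field computation of the factorized Jacobian, AM--GM, and the ODE system for $\lambda'$ and $u$ along the flow) is therefore in the same spirit as what the paper attributes to \cite{LWX25}.

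However, your central one-dimensional ``identity'' is \emph{false}. Writing $P(t)=A\cosh t-B\sinh t$ with $A=\lambda'(x)$, $B=u(x)$, and $Q(t)=\cosh t-H_1\sinh t$, one decomposes $P=\alpha Q+\beta Q'$ with $\alpha=\dfrac{A-BH_1}{1-H_1^2}$, $\beta=\dfrac{AH_1-B}{1-H_1^2}$, and a direct computation (for $H_1>1$, so $Q(t^*)=0$) gives
\[
(n+1)\int_0^{t^*}PQ^n\,dt=\frac{A+B}{H_1+1}+\alpha\Bigl[(n+1)\int_0^{t^*}Q^{n+1}\,dt-1\Bigr].
\]
The bracket is nonzero (for $n=0$ it equals $H_1-\sqrt{H_1^2-1}-1<0$), and $\alpha$ has no fixed sign on $\Sigma$: it vanishes precisely when $\lambda'=uH_1$, which is why your check on geodesic spheres succeeds but is misleading. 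The correction term therefore does not drop out, nor does it have a sign that would turn the identity into the required inequality pointwise. The further case $-1<H_1\le 1$, which you flag as needing ``careful interpretation'', is even worse: there is no finite $t^*$, the integrand $PQ^n$ need not be integrable on $[0,\infty)$, and your proposed truncation is not specified. This is a genuine gap: the Montiel--Ros style parametrization alone, with this integrand, does not close, and the proof in \cite{LWX25} must organize the normal-flow argument differently (e.g.\ via a monotonicity of the full boundary integral under the flow rather than a pointwise identity).
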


\begin{proposition}[\cite{LWX25}]\label{space form hkp}
	Let $M^{n+1}=[0, \bar{r}) \times N$ $(n \geq 2)$ be a sub-static warped product manifold with metric $\bar{g}=d r^2+\lambda(r)^2 g_N$ and potential $\lambda^{\prime}(r)$, where $N$ is an $n$-dimensional closed manifold. Let $\Omega \subset M^{n+1}$ be a bounded domain with a connected, static-convex boundary $\Sigma=\partial \Omega$ on which $\lambda^{\prime}>0$. Let $\varepsilon$ be a real number such that the following inequality
	$$
	\left(\lambda^{\prime}-\varepsilon u\right)\left(H_1(\kappa)-\varepsilon\right)>0
	$$
	holds everywhere on $\Sigma$, where $u=\left\langle\lambda \partial_r, \nu\right\rangle$. Then
	\begin{equation}\label{space form hkineq}
		\int_{\Sigma} \frac{\lambda^{\prime}-\varepsilon u}{H_1(\kappa)-\varepsilon} d \mu \geq(n+1) \int_{\Omega} \lambda^{\prime} d v.
	\end{equation}
	Moreover, if inequality (\ref{static convex}) holds and strictly holds at some point in $\Sigma$, then $\Sigma$ is umbilic and of constant mean curvature when the equality holds in (\ref{space form hkineq}).
\end{proposition}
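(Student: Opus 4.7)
The plan is to combine a Heintze--Karcher type inequality (specifically the $\varepsilon=0$ instance of \eqref{space form hkineq}, which under the horizon-type origin conditions $\lambda'(0)=0$, $\lambda''(0)>0$ is available in Brendle's form \cite{B13} without assuming static-convexity of $\Sigma$) with the weighted Minkowski identity \eqref{wpwmki} at appropriate $k$, following the scheme used for Theorem~\ref{glwthm}. In each case one forces a non-negative integral expression to be $\leq 0$, thereby killing every term and saturating the Heintze--Karcher inequality, whose equality case will then characterise $\Sigma$ as a slice.

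To illustrate case (i), take $k=1$, $\varepsilon=0$, $\chi=a$ in \eqref{wpwmki}. Since $(T_0)^{ij}=\delta^{ij}$ and the $A_j$-sum is empty when $k=1$, the identity reduces to
\begin{equation*}
\int_\Sigma a u H_1(\kappa)\,d\mu = \int_\Sigma a\lambda'\,d\mu + \frac{1}{n}\int_\Sigma\bigl(\partial_1 a\,|\nabla\Phi|^2 - \partial_2 a\,\langle\nabla u,\nabla\Phi\rangle\bigr)d\mu.
\end{equation*}
Substituting $aH_1\equiv c$ on the LHS together with the divergence identity $\int_\Sigma u\,d\mu = (n+1)\int_\Omega\lambda'\,dv$ (for $V=\lambda\partial_r$), the LHS equals $c(n+1)\int_\Omega\lambda'\,dv$. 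Plugging $H_1=c/a$ into the Heintze--Karcher inequality yields $\int_\Sigma a\lambda'\,d\mu \geq c(n+1)\int_\Omega\lambda'\,dv$. Subtracting these forces
\begin{equation*}
\int_\Sigma\bigl(\partial_1 a\,|\nabla\Phi|^2 - \partial_2 a\,h^{ij}\nabla_i\Phi\nabla_j\Phi\bigr)d\mu \leq 0,
\end{equation*}
after rewriting $\langle\nabla u,\nabla\Phi\rangle = h^{ij}\nabla_i\Phi\nabla_j\Phi$ via \eqref{hession u}. The sign hypotheses $\partial_1 a\geq 0$ and $\partial_2 a\leq 0$, together with strict convexity of $\Sigma$ when $\partial_2 a\not\equiv 0$, make the integrand pointwise non-negative; it therefore vanishes identically and Heintze--Karcher holds with equality. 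Brendle's equality case then yields $\Sigma=\{r_0\}\times N$.

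Cases (ii) and (iii) are handled by the same scheme with \eqref{wpwmki} at $k=2$. In case (ii) with $aH_2=c$, the Newton--MacLaurin inequality $H_1^2\geq H_2$ gives $1/H_1\leq H_1/H_2$, which combined with Heintze--Karcher and the constraint furnishes $\int_\Sigma a\lambda' H_1\,d\mu \geq c(n+1)\int_\Omega\lambda'\,dv$. Subtracting this from the $k=2$ weighted Minkowski identity forces the remaining terms---namely the Ricci weight $\int_\Sigma a\sum_j A_j\,d\mu$ (non-negative under star-shapedness and \eqref{glwcondition1} via \eqref{Aj}) and $\frac{1}{2C_n^2}\int_\Sigma (T_1)^{ij}\nabla_i a\nabla_j\Phi\,d\mu$ (non-negative since $T_1$ is positive semi-definite on $2$-convex hypersurfaces and by the sign of $\partial_i a$, using strict convexity when $\partial_2 a\not\equiv 0$)---to vanish identically. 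The vanishing of $\int a\sum_j A_j\,d\mu$ combined with $u>0$ forces $\nabla\Phi\equiv 0$ on $\Sigma$, directly giving a slice. Case (iii) with $aH_2/H_1=c$ is analogous: Newton--MacLaurin rewrites the constraint as the pointwise bound $a\lambda' H_1\geq c\lambda'$, and the same vanishing argument applies after substituting $\int au H_2\,d\mu = c\int u H_1\,d\mu = c\int\lambda'\,d\mu$ using the $k=1$ identity \eqref{h1}.

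The main technical obstacle lies in the equality case of the Heintze--Karcher inequality: Proposition~\ref{space form hkp} as stated requires static-convexity to characterise equality, but Theorem~\ref{glwthm1} replaces this hypothesis by the horizon-type origin $\lambda'(0)=0$, $\lambda''(0)>0$. One must therefore invoke Brendle's stronger equality statement in \cite{B13}, valid under these boundary conditions, to directly conclude $\Sigma=\{r_0\}\times N$ in case (i); alternatively, the Codazzi equation on the umbilic $\Sigma$ together with the Ricci formula \eqref{Ric} shows $\nabla H_1$ is aligned with $\nabla\Phi$, and the horizon conditions then rule out non-slice umbilic hypersurfaces. Bookkeeping through the subcases in which $\partial_1 a$ or $\partial_2 a$ vanishes identically (so the strict convexity hypothesis can be dropped, matching the statement), and ensuring that the $T_1$-quadratic form is correctly signed in the absence of strict convexity, require care.
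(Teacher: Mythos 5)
Your proposal does not prove the statement it is supposed to prove. Proposition \ref{space form hkp} is the Heintze--Karcher type inequality itself: the assertion that
\begin{equation*}
\int_{\Sigma} \frac{\lambda^{\prime}-\varepsilon u}{H_1(\kappa)-\varepsilon}\, d\mu \geq (n+1)\int_{\Omega}\lambda^{\prime}\, dv
\end{equation*}
holds for an arbitrary shift $\varepsilon\in\mathbb{R}$ on a static-convex boundary of a sub-static warped product manifold, together with the characterisation of equality. What you have written is instead an argument for Theorem \ref{glwthm1}: your cases (i)--(iii), the hypotheses that $aH_1$, $aH_2$ or $aH_2/H_1$ be constant, and the horizon conditions $\lambda'(0)=0$, $\lambda''(0)>0$ all belong to that theorem, not to the proposition. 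Crucially, your argument \emph{uses} a Heintze--Karcher inequality (Brendle's $\varepsilon=0$ version from \cite{B13}) as an input rather than establishing one. Relative to the task at hand this is circular for $\varepsilon=0$ and entirely silent for $\varepsilon\neq 0$, which is precisely the new content of the proposition; nowhere do you engage with the shifted denominator $H_1(\kappa)-\varepsilon$, the shifted numerator $\lambda'-\varepsilon u$, the sign hypothesis $(\lambda'-\varepsilon u)(H_1(\kappa)-\varepsilon)>0$, or the role of static-convexity \eqref{static convex} in the equality analysis.

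For the record, the paper does not prove this proposition either: it is quoted verbatim from Li--Wei--Xu \cite{LWX25}, whose proof (as the introduction of the present paper indicates) rests on the Minkowski-type integral inequality of Li and Xia \cite{LX19} for static-convex domains in sub-static manifolds --- that is where the static-convexity hypothesis and the sub-static structure actually enter. A correct blind attempt would either reproduce that route or supply an independent derivation handling general $\varepsilon$; a natural first step, absent from your write-up, is to reduce the shifted inequality to an unshifted one by combining it with the first Minkowski formula \eqref{h1}, $\int_\Sigma uH_1(\kappa-\varepsilon)\,d\mu=\int_\Sigma(\lambda'-\varepsilon u)\,d\mu$, but even that reduction is not straightforward and is not attempted here. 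As it stands, the proposal addresses a different theorem and leaves the stated proposition unproved.
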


\section{Proof of Theorems \ref{warped product thm1} and \ref{space forem thm1}}\label{sec3}
After all the preparation work, we are ready to prove our main theorems.

\begin{proof}[Proof of Theorem \ref{warped product thm1}]
	(i) Since $({\kappa}-\varepsilon)\in{\Gamma_{k}^+}$, we recall from (\ref{nm1}) that
	\begin{equation}\label{nmij**}
		H_{i}({\kappa}-\varepsilon)H_{j-1}({\kappa}-\varepsilon) \geq H_{i-1}({\kappa}-\varepsilon)H_{j}({\kappa}-\varepsilon), \quad 1\leq i < j \leq k,
	\end{equation}
	where all equalities hold if and only if $\Sigma$ is umbilical.
	Multiplying (\ref{nmij**}) by $a_i$ and $b_j$, and summing over $i$ and $j$, we obtain
	\begin{equation}\label{snmij**}
		\begin{aligned}
			&\sum_{i=1}^{l-1} a_i (\Phi(r),\varepsilon\Phi(r)-u) H_i ({\kappa}-\varepsilon) \sum_{j=l}^k b_j (\Phi(r),\varepsilon\Phi(r)-u) H_{j-1}({\kappa}-\varepsilon) \\
			\geq &\sum_{i=1}^{l-1} a_i (\Phi(r),\varepsilon\Phi(r)-u) H_{i-1}({\kappa}-\varepsilon) \sum_{j=l}^k b_j (\Phi(r),\varepsilon\Phi(r)-u) H_{j}({\kappa}-\varepsilon).
		\end{aligned}
	\end{equation}
	By the assumption
	$$
	\sum_{i=1}^{l-1} a_i (\Phi(r),\varepsilon\Phi(r)-u) H_i ({\kappa}-\varepsilon) =\sum_{j=l}^k b_j (\Phi(r),\varepsilon\Phi(r)-u) H_{j}({\kappa}-\varepsilon) >0, \quad 2 \leq l \leq k \leq n,
	$$
	we infer from (\ref{snmij**}) that
	\begin{equation}\label{snmij1**}
		\sum_{j=l}^k b_j (\Phi(r),\varepsilon\Phi(r)-u) H_{j-1}({\kappa}-\varepsilon) \geq \sum_{i=1}^{l-1} a_i (\Phi(r),\varepsilon\Phi(r)-u) H_{i-1}({\kappa}-\varepsilon).
	\end{equation}
	On the other hand, by (\ref{nm1}) and \cite[(29)]{WX14}, we have, for each $ 1\leq i < j \leq k$, $1\leq p \leq n$,
	\begin{equation}\label{h;j}
		(j-1)H_{j-2;p} (\tilde{h}) H_{i}({\kappa}-\varepsilon) > (i-1) H_j ({\kappa}-\varepsilon) H_{i-2;p} (\tilde{h}),
	\end{equation}
	where $\tilde{h}_j^i=h_j^i- \varepsilon\delta_j^i$. Multiplying (\ref{h;j}) by $a_i$ and $b_j$ and summing over $i$ and $j$, we obtain
	\begin{equation}
		\begin{aligned}
			&\sum_{j=l}^k (j-1) b_j (\Phi(r),\varepsilon\Phi(r)-u) H_{j-2;p} (\tilde{h}) \sum_{i=1}^{l-1} a_i (\Phi(r),\varepsilon\Phi(r)-u) H_i ({\kappa}-\varepsilon) \\
			> &\sum_{j=l}^k b_j (\Phi(r),\varepsilon\Phi(r)-u) H_{j} ({\kappa}-\varepsilon) \sum_{i=1}^{l-1} (i-1) a_i (\Phi(r),\varepsilon\Phi(r)-u) H_{i-2;p} (\tilde{h}).
		\end{aligned}
	\end{equation}
	Hence
	\begin{equation}\label{h;j1}
		\begin{aligned}
			&\sum_{j=l}^k (j-1) b_j (\Phi(r),\varepsilon\Phi(r)-u) H_{j-2;p} (\tilde{h})
			> \sum_{i=1}^{l-1} (i-1) a_i (\Phi(r),\varepsilon\Phi(r)-u) H_{i-2;p} (\tilde{h}).
		\end{aligned}
	\end{equation}
	We infer from (\ref{wpwmki}), for each $i$ and $j$,
	\begin{equation*}
		\begin{aligned}
			&\int_{\Sigma} a_i(\Phi(r),\varepsilon\Phi(r)-u)\left((\lambda'-\varepsilon u) H_{i-1}({\kappa}-\varepsilon) -u H_i({\kappa}-\varepsilon) \right)d \mu  \\
			=&-(i-1)\int_{\Sigma}a_i(\Phi(r),\varepsilon\Phi(r)-u)\sum_{p=1}^{n}A_{p} H_{i-2;p}(\tilde{h})d \mu\\
			&-\frac{1}{i C_{n}^i} \int_{\Sigma} \left(T_{i-1}\right)^{pq}(\tilde{h})\left(\partial_{1}a_i \nabla_{p}\Phi+\partial_{2}a_i \nabla_{p}(\varepsilon\Phi-u)\right) \nabla_q \Phi d \mu ,
		\end{aligned}
	\end{equation*}
	and
	\begin{equation*}
		\begin{aligned}
			&\int_{\Sigma} b_j(\Phi(r),\varepsilon\Phi(r)-u)\left((\lambda'-\varepsilon u) H_{j-1}({\kappa}-\varepsilon) -u H_j({\kappa}-\varepsilon) \right)d \mu  \\
			=&-(j-1)\int_{\Sigma}b_j(\Phi(r),\varepsilon\Phi(r)-u)\sum_{p=1}^{n}A_{p} H_{j-2;p}(\tilde{h})d \mu\\
			&-\frac{1}{j C_{n}^j} \int_{\Sigma} \left(T_{j-1}\right)^{pq}(\tilde{h})\left(\partial_{1}b_j \nabla_{p}\Phi+\partial_{2}b_j \nabla_{p}(\varepsilon\Phi-u)\right) \nabla_q \Phi d \mu.
		\end{aligned}
	\end{equation*}
	Now at a fixed point on $\Sigma$, we take an orthogonal frame $\{e_{1}, \cdots,e_{n}\}$  such that $g_{ij}=\delta_{ij}$ and $h_{ij}=\kappa_{i}\delta_{ij}$. Recall the assumptions $\partial_{1}a_i, \partial_{2}b_j \leq 0$, $\partial_{2}a_i, \partial_{1}b_j \geq 0$ and $h_{ij}>\varepsilon g_{ij}$ when $\partial_{2}a_i$ or $\partial_{2}b_j $ does not vanish at some point for some $0\le i\le l-1$ or $l\le j\le k$. Using (\ref{hession phi}), (\ref{hession u}) and $(\kappa-\varepsilon)\in{\Gamma_{k}^+}$, we have
	\begin{equation}\label{partial1**}
		\begin{aligned}
			&\left(T_{i-1}\right)^{pq}(\tilde{h})\left(\partial_{1}a_i \nabla_{p}\Phi+\partial_{2}a_i \nabla_{p}(\varepsilon\Phi-u)\right) \nabla_q \Phi \\
			=&\left(T_{i-1}\right)^{pq}(\tilde{h})\left(\partial_{1}a_i \nabla_{p}\Phi-\partial_{2}a_i (h_{p}^{l}\nabla_{l}\Phi-\varepsilon\nabla_{p}\Phi)\right) \nabla_q \Phi\\
			=&\sum_{p=1}^{n}\left(T_{i-1}\right)^{pp}(\tilde{h})\left(\partial_{1}a_i -\partial_{2}a_i (\kappa_{p}-\varepsilon)\right) |\nabla_p \Phi|^2\leq 0,
		\end{aligned}
	\end{equation}
	and
	\begin{equation}\label{partial2**}
		\begin{aligned}
			&\left(T_{j-1}\right)^{pq}(\tilde{h})\left(\partial_{1}b_j \nabla_{p}\Phi+\partial_{2}b_j \nabla_{p}(\varepsilon\Phi-u)\right) \nabla_q \Phi \\
			=&\left(T_{j-1}\right)^{pq}(\tilde{h})\left(\partial_{1}b_j \nabla_{p}\Phi-\partial_{2}b_j (h_{p}^{l}\nabla_{l}\Phi-\varepsilon\nabla_{p}\Phi)\right) \nabla_q \Phi\\
			=&\sum_{p=1}^{n}\left(T_{j-1}\right)^{pp}(\tilde{h})\left(\partial_{1}b_j -\partial_{2}b_j (\kappa_{p}-\varepsilon)\right) |\nabla_p \Phi|^2\geq 0.
		\end{aligned}
	\end{equation}
	Thus
	\begin{equation}\label{ai}
		\begin{aligned}
			&\int_{\Sigma} a_i(\Phi(r),\varepsilon\Phi(r)-u)\left((\lambda'-\varepsilon u) H_{i-1}({\kappa}-\varepsilon) -u H_i({\kappa}-\varepsilon) \right)d \mu  \\
			\geq&-(i-1)\int_{\Sigma}a_i(\Phi(r),\varepsilon\Phi(r)-u)\sum_{p=1}^{n}A_{p} H_{i-2;p}(\tilde{h})d \mu,
		\end{aligned}
	\end{equation}
	and
	\begin{equation}\label{bj}
		\begin{aligned}
			&\int_{\Sigma} b_j(\Phi(r),\varepsilon\Phi(r)-u)\left((\lambda'-\varepsilon u) H_{j-1}({\kappa}-\varepsilon) -u H_j({\kappa}-\varepsilon) \right)d \mu  \\
			\leq&-(j-1)\int_{\Sigma}b_j(\Phi(r),\varepsilon\Phi(r)-u)\sum_{p=1}^{n}A_{p} H_{j-2;p}(\tilde{h})d \mu.
		\end{aligned}
	\end{equation}
	Summing (\ref{ai}) over $i$ and (\ref{bj}) over $j$, and  taking the difference, we get
	\begin{align*}
		0 =\int_{\Sigma} & \left( \sum_{j=l}^k b_j (\Phi(r),\varepsilon\Phi(r)-u) H_{j}({\kappa}-\varepsilon) -\sum_{i=1}^{l-1} a_i (\Phi(r),\varepsilon\Phi(r)-u) H_i ({\kappa}-\varepsilon) \right) u d\mu \\  \geq \int_{\Sigma}  &\left( \sum_{j=l}^k b_j (\Phi(r),\varepsilon\Phi(r)-u) H_{j-1}({\kappa}-\varepsilon)\right.\\
		&\left.-\sum_{i=1}^{l-1} a_i (\Phi(r),\varepsilon\Phi(r)-u) H_{i-1} ({\kappa}-\varepsilon) \right) (\lambda'-\varepsilon u) d\mu \\
		&+\int_{\Sigma} \sum_{p=1}^{n} A_p \left(\sum_{j=l}^{k} (j-1) b_j(\Phi(r),\varepsilon\Phi(r)-u) H_{j-2;p}(\tilde{h}) \right. \\ &\left. - \sum_{i=1}^{l-1} (i-1) a_i (\Phi(r),\varepsilon\Phi(r)-u) H_{i-2;p}(\tilde{h}) \right) d \mu\\
		\geq & 0,
	\end{align*}
	where in the last inequality we used (\ref{Aj}), (\ref{snmij1**}), (\ref{h;j1}) and $\lambda'-\varepsilon u > 0$ on $\Sigma$. We conclude that the equality holds in the Newton-MacLaurin inequality (\ref{nm1}), which implies that $\Sigma$ is totally umbilical. Moreover, thanks to (\ref{h;j1}), we have
	\begin{equation}
		A_j \equiv 0, \quad \forall 1 \leq j \leq n,
	\end{equation}
	which implies that the unit normal $\nu$ is parallel to the radial direction $\partial_{r}$ everywhere on $\Sigma$, and hence $\Sigma$ is a slice.
	
	(ii) The proof follows a similar structure as before, with minor adjustments required for the index values. Proceeding as above, we have
	\begin{equation}\label{snmij111**}
		\sum_{j=l}^k b_j (\Phi(r),\varepsilon\Phi(r)-u) H_{j+1}({\kappa}-\varepsilon) \leq \sum_{i=0}^{l-1} a_i (\Phi(r),\varepsilon\Phi(r)-u) H_{i+1}({\kappa}-\varepsilon),
	\end{equation}
	\begin{equation}\label{h;j1*}
		\begin{aligned}
			&\sum_{j=l}^k j b_j (\Phi(r),\varepsilon\Phi(r)-u) H_{j-1;p} (\tilde{h})
			> \sum_{i=0}^{l-1} i a_i (\Phi(r),\varepsilon\Phi(r)-u) H_{i-1;p} (\tilde{h}),
		\end{aligned}
	\end{equation}
	and
	\begin{align*}
		0 =\int_{\Sigma} &\left( \sum_{i=0}^{l-1} a_i (\Phi(r),\varepsilon\Phi(r)-u) H_i ({\kappa}-\varepsilon)\right. \\
		&\left.- \sum_{j=l}^k b_j (\Phi(r),\varepsilon\Phi(r)-u) H_{j}({\kappa}-\varepsilon) \right) (\lambda' - \varepsilon u) d\mu \\  \geq  \int_{\Sigma} & \left( \sum_{i=0}^{l-1} a_i (\Phi(r),\varepsilon\Phi(r)-u) H_{i+1} ({\kappa}-\varepsilon) -\sum_{j=l}^k b_j (\Phi(r),\varepsilon\Phi(r)-u) H_{j+1}({\kappa}-\varepsilon) \right) u d\mu \\
		+\int_{\Sigma} & \sum_{p=1}^{n} A_p \left(\sum_{j=l}^{k} j b_j(\Phi(r),\varepsilon\Phi(r)-u) H_{j-1;p}(\tilde{h}) \right.\\
		&\left.- \sum_{i=1}^{l-1} i a_i (\Phi(r),\varepsilon\Phi(r)-u) H_{i-1;p}(\tilde{h}) \right) d \mu
		\geq  0.
	\end{align*}
	Here, the last inequality follows from (\ref{spf}), (\ref{Aj}), (\ref{snmij111**}) and (\ref{h;j1*}). We finish the proof by examining the equality case as before.
	
\end{proof}

\begin{proof}[Proof of Theorem \ref{space forem thm1}]
	For the space forms, we notice that $\tilde{h}_{ij}=h_{ij}-\varepsilon g_{ij}$ is a Codazzi tensor, i.e., $\nabla_{\ell}\tilde{h}_{ij}$ is symmetric in $i,j,\ell$. It follows that the Newton tensor $T_{k-1}(\tilde{h}
	)$ is divergence free, i.e., $\nabla_{i}\left(T_{k-1}^{ij}(\tilde{h})\right)=0$. Thus, (\ref{wp2}) implies
	\begin{equation}\label{sftk}
		\sum_{i,j=1}^{n}\nabla_{i}\left( T_{k-1}^{ij}(\tilde{h})\nabla_{j}\Phi\right)=(\lambda'-\varepsilon u)(n+1-k)\sigma_{k-1}(\kappa-\varepsilon)-uk\sigma_{k}(\kappa-\varepsilon).
	\end{equation}
	Integrating above equation, we have the Minkowski formula in the space forms
	\begin{equation}\label{sfmk}
		\int_{\Sigma}(\lambda'-\varepsilon u)H_{k-1}(\kappa-\varepsilon)d\mu=\int_{\Sigma}uH_{k}(\kappa-\varepsilon)d\mu.
	\end{equation}
	Multiplying (\ref{sftk}) by the function $\chi$ and integrating by parts, we obtain the weighted Minkowski formula in the space forms
	\begin{equation}\label{wmki}
	\begin{aligned}
	\int_{\Sigma}\chi(\Phi(r),\varepsilon\Phi(r)-u) u H_k({\kappa}&-\varepsilon) d \mu=\int_{\Sigma} \chi(\Phi(r),\varepsilon\Phi(r)-u)(\lambda'-\varepsilon u) H_{k-1}({\kappa}-\varepsilon) d \mu  \\
	&+\frac{1}{k C_{n}^k} \int_{\Sigma} \left(T_{k-1}\right)^{ij}(\tilde{h})\left(\partial_{1}\chi\nabla_{i}\Phi+\partial_{2}\chi\nabla_{i}(\varepsilon\Phi-u)\right) \nabla_j \Phi d \mu .
	\end{aligned}
	\end{equation}
	The remainder of the proof is almost the same as that of Theorem \ref{warped product thm1}. The only difference is that instead of (\ref{wpwmki}), we apply (\ref{wmki}) to complete the proof.

\end{proof}
	
\begin{proof}[Proof of Corollary \ref{coro3}]
    It follows from (\ref{pluscondition1}), (\ref{sfmk}) and (\ref{wmki}) that
		\begin{align*}
			\int_{\Sigma}H_{k-2}({\kappa}-\varepsilon)(\lambda'-\varepsilon u)d\mu =&\int_{\Sigma}H_{k-1}({\kappa}-\varepsilon)ud\mu \geq\int_{\Sigma}\chi(\Phi(r),\varepsilon\Phi(r)-u) u H_k({\kappa}-\varepsilon)d\mu \\ =&\int_{\Sigma} \chi(\Phi(r),\varepsilon\Phi(r)-u)(\lambda'-\varepsilon u) H_{k-1}({\kappa}-\varepsilon) d \mu  \\
			&+\frac{1}{k C_{n}^k} \int_{\Sigma} T_{k-1}^{ij}(\tilde{h})\left(\partial_{1}\chi\nabla_{i}\Phi+\partial_{2}\chi\nabla_{i}(\varepsilon\Phi-u)\right) \nabla_j \Phi d \mu \\
			\geq & \int_{\Sigma} \chi(\Phi(r),\varepsilon\Phi(r)-u)(\lambda'-\varepsilon u) H_{k-1}({\kappa}-\varepsilon) d \mu \\
			\geq & \int_{\Sigma}H_{k-2}({\kappa}-\varepsilon)(\lambda'-\varepsilon u)d\mu,
		\end{align*}
	then
	\begin{equation*}
		\int_{\Sigma}(H_{k-1}({\kappa}-\varepsilon) - \chi(\Phi(r),\varepsilon\Phi(r)-u) H_k({\kappa}-\varepsilon))ud\mu=0.
	\end{equation*}
	Combining with $H_{k-1}({\kappa}-\varepsilon) - \chi(\Phi(r),\varepsilon\Phi(r)-u) H_k({\kappa}-\varepsilon)\geq 0$ and $u>0$, we obtain $H_{k-1}({\kappa}-\varepsilon) = \chi(\Phi(r),\varepsilon\Phi(r)-u) H_k({\kappa}-\varepsilon)$. By Theorem \ref{space forem thm1}, we know that $\Sigma$ is a geodesic sphere.
\end{proof}

\section{Proof of Theorems \ref{warped product thm+2} and \ref{thm+2}}\label{sec4}
In this section, we will use the similar idea as in the work of Kwong, Lee and Pyo \cite{KLP18}. The main tools are Minkowski formulae as well as the Heintze-Karcher type inequality.

\begin{proof}[Proof of Theorem \ref{warped product thm+2}]
	Divided (\ref{bjhj*}) by $\eta$, it suffices to prove the result in the case where
	\begin{equation*}
		\begin{aligned}
			& \sum_{j=1}^{k}\bigg(a_j (\Phi(r),\varepsilon\Phi(r)-u) H_j ({\kappa}-\varepsilon) + b_j (\Phi(r),\varepsilon\Phi(r)-u) H_1 ({\kappa}-\varepsilon) H_{j-1} ({\kappa}-\varepsilon)\bigg) = 1.
		\end{aligned}
	\end{equation*}
	We infer from (\ref{Aj}) and (\ref{wpwmki}), for each $j$,
	\begin{equation*}
		\begin{aligned}
			&\int_{\Sigma} a_j (\Phi(r),\varepsilon\Phi(r)-u)\left((\lambda'-\varepsilon u) H_{j-1}({\kappa}-\varepsilon) -u H_j({\kappa}-\varepsilon) \right)d \mu  \\
			\leq &-\frac{1}{j C_{n}^j} \int_{\Sigma} \left(T_{j-1}\right)^{pq}(\tilde{h})\left(\partial_{1}a_j \nabla_{p}\Phi+\partial_{2}a_j \nabla_{p}(\varepsilon\Phi-u)\right) \nabla_q \Phi d \mu ,
		\end{aligned}
	\end{equation*}
	and
	\begin{equation*}
		\begin{aligned}
			&\int_{\Sigma} b_j (\Phi(r),\varepsilon\Phi(r)-u)\left((\lambda'-\varepsilon u) H_{j-1}({\kappa}-\varepsilon) -u H_j({\kappa}-\varepsilon) \right)d \mu  \\
			\leq &-\frac{1}{j C_{n}^j} \int_{\Sigma} \left(T_{j-1}\right)^{pq}(\tilde{h})\left(\partial_{1}b_j \nabla_{p}\Phi+\partial_{2}b_j \nabla_{p}(\varepsilon\Phi-u)\right) \nabla_q \Phi d \mu.
		\end{aligned}
	\end{equation*}
	Using (\ref{hession phi}), (\ref{hession u}) and the fact $(\kappa-\varepsilon) \in \Gamma_{k}^{+}$, we have
	\begin{equation}\label{partial1*}
		\begin{aligned}
			&\left(T_{j-1}\right)^{pq}(\tilde{h})\left(\partial_{1}a_j \nabla_{p}\Phi+\partial_{2}a_j \nabla_{p}(\varepsilon\Phi-u)\right) \nabla_q \Phi \\
			=&\left(T_{j-1}\right)^{pq}(\tilde{h})\left(\partial_{1}a_j \nabla_{p}\Phi-\partial_{2}a_j (h_{p}^{l}\nabla_{l}\Phi-\varepsilon\nabla_{p}\Phi)\right) \nabla_q \Phi\\
			=&\sum_{p=1}^{n}\left(T_{j-1}\right)^{pp}(\tilde{h})\left(\partial_{1}a_j -\partial_{2}a_j (\kappa_{p}-\varepsilon)\right) |\nabla_p \Phi|^2\geq 0.
		\end{aligned}
	\end{equation}
	Similarly,
	\begin{equation}\label{partial2*}
		\begin{aligned}
			\left(T_{j-1}\right)^{pq}(\tilde{h})\left(\partial_{1}b_j \nabla_{p}\Phi+\partial_{2}b_j \nabla_{p}(\varepsilon\Phi-u)\right) \nabla_q \Phi \geq 0.
		\end{aligned}
	\end{equation}
	Thus we have
	\begin{equation}\label{wpaj}
		\begin{aligned}
			&\int_{\Sigma} a_j (\Phi(r),\varepsilon\Phi(r)-u)\left((\lambda'-\varepsilon u) H_{j-1}({\kappa}-\varepsilon) -u H_j({\kappa}-\varepsilon) \right)d \mu \leq 0,
		\end{aligned}
	\end{equation}
	and
	\begin{equation}\label{wpbj}
		\begin{aligned}
			&\int_{\Sigma} b_j (\Phi(r),\varepsilon\Phi(r)-u)\left((\lambda'-\varepsilon u) H_{j-1}({\kappa}-\varepsilon) -u H_j({\kappa}-\varepsilon) \right)d \mu \leq 0.
		\end{aligned}
	\end{equation}
	Making use of (\ref{nm1}), (\ref{wpaj}) and (\ref{wpbj}), we derive
		\begin{align*}
		    \int_{\Sigma} u d \mu
		   = \int_{\Sigma} & u \bigg(\sum_{j=1}^{k}\left(a_j (\Phi(r),\varepsilon\Phi(r)-u) H_j ({\kappa}-\varepsilon) \right.\\
		   &\left.+ b_j (\Phi(r),\varepsilon\Phi(r)-u) H_1 ({\kappa}-\varepsilon) H_{j-1} ({\kappa}-\varepsilon)\right)\bigg) d \mu \\
		   \geq \int_{\Sigma} & u \bigg(\sum_{j=1}^{k}\left(a_j (\Phi(r),\varepsilon\Phi(r)-u) H_j ({\kappa}-\varepsilon) + b_j (\Phi(r),\varepsilon\Phi(r)-u) H_{j} ({\kappa}-\varepsilon)\right)\bigg) d \mu \\
		   \geq \int_{\Sigma} & (\lambda'-\varepsilon u) \bigg(\sum_{j=1}^{k}\left(a_j (\Phi(r),\varepsilon\Phi(r)-u) H_{j-1} ({\kappa}-\varepsilon) \right.\\
		   & \left.+ b_j (\Phi(r),\varepsilon\Phi(r)-u) H_{j-1} ({\kappa}-\varepsilon)\right)\bigg) d \mu \\
		   \geq \int_{\Sigma} & \frac{\lambda'-\varepsilon u}{H_1 (\kappa-\varepsilon)} \bigg(\sum_{j=1}^{k}\left(a_j (\Phi(r),\varepsilon\Phi(r)-u) H_{j} ({\kappa}-\varepsilon) \right.\\
		   &\left.+ b_j (\Phi(r),\varepsilon\Phi(r)-u) H_1 ({\kappa}-\varepsilon) H_{j-1} ({\kappa}-\varepsilon)\right)\bigg) d \mu \\
		   = \int_{\Sigma} & \frac{\lambda'-\varepsilon u}{H_1 (\kappa-\varepsilon)} d \mu.
		\end{align*}
	
	On the other hand, Li-Wei-Xu's inequality (Proposition \ref{space form hkp}) reduces a reverse inequality
	$$
	\int_{\Sigma} u d \mu= (n+1) \int_{\Omega} \lambda' d v \leq \int_{\Sigma} \frac{\lambda'-\varepsilon u}{H_1 (\kappa-\varepsilon)} d \mu.
	$$
	These two inequalities yield that the equalities hold in Li-Wei-Xu's inequality and Newton-Maclaurin inequalities. We conclude that $\Sigma$ is umbilic. Moreover, thanks to (\ref{wpaj}) and (\ref{wpbj}), we have
	\begin{equation*}
		A_j \equiv 0, \quad \forall 1 \leq j \leq n,
	\end{equation*}
	which implies that the unit normal $\nu$ is parallel to the radial direction $\partial_{r}$ everywhere on $\Sigma$, and hence $\Sigma$ is a slice.
	
\end{proof}

\begin{proof}[Proof of Theorem \ref{thm+2}]
	Since there exists at least one elliptic point on $\Sigma$ and the assumption $H_k (\kappa+1) > 0$, it follows from \cite[Proposition 2.1]{LWX25} that $(\kappa+1)\in{\Gamma_{k}^+}$. The remainder of the proof is almost the same as that of Theorem \ref{warped product thm+2}. The only difference is that instead of (\ref{wpwmki}) and Proposition \ref{space form hkp}, we apply (\ref{wmki}) and Proposition \ref{hkp}, respectively. We then complete the proof.
	
\end{proof}

\section{Proof of Theorems \ref{thm+3} and \ref{thm+4}}\label{sec5}
In this section, we establish two rigidity theorems for hypersurfaces satisfying fully nonlinear curvature conditions in warped product manifolds. The proofs combine geometric inequalities with careful analysis of the equality cases.

\begin{proof}[Proof of Theorem \ref{thm+3}]
	The assumption $({\kappa}-\varepsilon)\in\Gamma_{k}^+$ implies $H_j ({\kappa}-\varepsilon)>0$ for all $j=1,\cdots,k$. From equation (\ref{aij}), we immediately deduce $u>0$ on $\Sigma$.
	
	For the case of $k \geq 2$, applying the Newton-Maclaurin inequality (\ref{nm1}), we obtain for any $0 \leq i < j \leq k$,
	\begin{equation}\label{gnm}
		\left(\frac{1}{H_1 ({\kappa}-\varepsilon)}\right)^{j-i} \leq \frac{H_i ({\kappa}-\varepsilon)}{H_j ({\kappa}-\varepsilon)}=\prod_{m=i}^{j-1}\frac{H_m ({\kappa}-\varepsilon)}{H_{m+1} ({\kappa}-\varepsilon)} \leq \left(\frac{H_{j-1} ({\kappa}-\varepsilon)}{H_j ({\kappa}-\varepsilon)}\right)^{j-i}.
	\end{equation}
	This leads to two key estimates,
	\begin{equation}\label{u/v-u1}
		\begin{aligned}
			\beta \frac{u}{\lambda' - \varepsilon u}=&\sum_{i < j} a_{i,j}\left(\frac{H_{i}({\kappa}-\varepsilon)}{H_{j}({\kappa}-\varepsilon)}\right)^{\frac{1}{j-i}} \leq \sum_{i < j} a_{i,j}\frac{H_{j-1} ({\kappa}-\varepsilon)}{H_j ({\kappa}-\varepsilon)}\\
			\leq &\sum_{i < j} a_{i,j}\frac{H_{k-1} ({\kappa}-\varepsilon)}{H_k ({\kappa}-\varepsilon)}=\frac{H_{k-1} ({\kappa}-\varepsilon)}{H_k ({\kappa}-\varepsilon)},
		\end{aligned}
	\end{equation}
	and
	\begin{equation}\label{u/v-u2}
		\beta \frac{u}{\lambda' - \varepsilon u}=\sum_{i < j} a_{i,j}\left(\frac{H_{i}({\kappa}-\varepsilon)}{H_{j}({\kappa}-\varepsilon)}\right)^{\frac{1}{j-i}} \geq \sum_{i < j} a_{i,j}\frac{1}{H_1 ({\kappa}-\varepsilon)}=\frac{1}{H_1 ({\kappa}-\varepsilon)}.
	\end{equation}
	From (\ref{u/v-u1}) we derive
	\begin{equation}\label{hk}
		\beta \int_{\Sigma}uH_k ({\kappa}-\varepsilon)d\mu \leq \int_{\Sigma}(\lambda' - \varepsilon u)H_{k-1} ({\kappa}-\varepsilon)d\mu,
	\end{equation}
	which implies $\beta \leq 1$ by Lemma \ref{wpmkl}. While (\ref{u/v-u2}) gives
	\begin{equation*}
		\beta \int_{\Sigma}uH_1 ({\kappa}-\varepsilon)d\mu \geq \int_{\Sigma}(\lambda' - \varepsilon u)d\mu,
	\end{equation*}
	and thus $\beta \geq 1$ by (\ref{h1}). Therefore $\beta = 1$ and all inequalities in (\ref{gnm}) become equalities, which reduces $\Sigma$ being umbilical. The equality in (\ref{hk}) then implies that $\Sigma$ must be a slice.
	
	For the case of $k=1$, (\ref{u/v-u2}) becomes an equality, so $\beta = 1$ by (\ref{h1}). Using the Newton-Maclaurin inequality, we obtain
	\begin{equation}\label{gnm2}
		H_2 ({\kappa}-\varepsilon)\frac{u}{\lambda' - \varepsilon u}=\frac{H_{2}({\kappa}-\varepsilon)}{H_{1}({\kappa}-\varepsilon)} \leq \frac{H_{1}({\kappa}-\varepsilon)}{H_{0}({\kappa}-\varepsilon)}=H_{1}({\kappa}-\varepsilon).
	\end{equation}
	Integration \eqref{gnm2}, then combined with the Minkowski type formula (\ref{wpmkf}) for $k=2$ shows that (\ref{gnm2}) must be an equality. This completes the proof.
\end{proof}

\begin{proof}[Proof of Theorem \ref{thm+4}]
	The assumption $({\kappa}-\varepsilon)\in\Gamma_{k}^+$ says $H_k ({\kappa}-\varepsilon)>0$. It follows from (\ref{pluscondition2}) that $u>0$. We recall that Newton-Maclaurin's inequality (\ref{nm2})
	\begin{equation*}
		(H_{k}({\kappa}-\varepsilon))^{\frac{1}{k}}\leq (H_{k-1}({\kappa}-\varepsilon))^{\frac{1}{k-1}}
	\end{equation*}
	holds. We employ the Li-Wei-Xu's Heintze-Karcher type inequality (Proposition \ref{space form hkp})
	\begin{equation}\label{thm4hk}
		\int_{\Sigma}ud\mu = (n+1)\int_{\Omega} \lambda' d v \leq \int_{\Sigma}\frac{\lambda'-\varepsilon u}{H_{1}({\kappa}-\varepsilon)}d\mu,
	\end{equation}
    the Minkowski type formula (\ref{wpmkf}) and (\ref{Aj})
	\begin{equation}\label{thm4mk}
		\int_{\Sigma}\left( \lambda'- \varepsilon u \right) H_{k-1}({\kappa}-\varepsilon) d \mu \leq \int_{\Sigma} u H_k({\kappa}-\varepsilon) d \mu.
	\end{equation}
	Combining these with (\ref{pluscondition2}), we obtain
	\begin{equation}\label{holder1}
		\int_{\Sigma}(H_k({\kappa}-\varepsilon))^{-\alpha}(\lambda'-\varepsilon u)d\mu\leq \int_{\Sigma}\frac{\lambda'- \varepsilon u}{H_{1}({\kappa}-\varepsilon)}d\mu\leq\int_{\Sigma}(H_{k}({\kappa}-\varepsilon))^{-\frac{1}{k}}(\lambda'- \varepsilon u)d\mu
	\end{equation}
	and
	\begin{equation}\label{holder2}
		\int_{\Sigma}(H_k({\kappa}-\varepsilon))^{1-\alpha}(\lambda'-\varepsilon u)d\mu=\int_{\Sigma} u H_k({\kappa}-\varepsilon) d \mu \geq \int_{\Sigma}(\lambda'-\varepsilon u) H_{k-1}({\kappa}-\varepsilon) d \mu.
	\end{equation}
	Applying H\"{o}lder's inequality we have
	\begin{equation*}
		\begin{aligned}
			&\int_{\Sigma}(H_{k}({\kappa}-\varepsilon))^{-\frac{1}{k}}(\lambda'- \varepsilon u)\\
			\leq&\left(\int_{\Sigma}(H_{k-1}({\kappa}-\varepsilon))^{1-p}(H_{k}({\kappa}-\varepsilon))^{-\frac{p}{k}}(\lambda'- \varepsilon u)\right)^{\frac{1}{p}}\left(\int_{\Sigma}H_{k-1}({\kappa}-\varepsilon)(\lambda'- \varepsilon u)\right)^{\frac{p-1}{p}}\\
			\leq&\left(\int_{\Sigma}(H_{k}({\kappa}-\varepsilon))^{\frac{-kp-1+k}{k}}(\lambda'- \varepsilon u)\right)^{\frac{1}{p}}\left(\int_{\Sigma}(H_{k}({\kappa}-\varepsilon))^{1-\alpha}(\lambda'- \varepsilon u)\right)^{\frac{p-1}{p}}.
		\end{aligned}
	\end{equation*}
	Choose $p$, such that $\frac{-kp-1+k}{k}=-\alpha$, then $p=\frac{k\alpha+k-1}{k}$. The condition $p\geq 1$ implies $\alpha\geq\frac{1}{k}$. The above inequality becomes
	\begin{equation}\label{holder3}
		\begin{aligned}
			&\int_{\Sigma}(H_{k}({\kappa}-\varepsilon))^{-\frac{1}{k}}(\lambda'- \varepsilon u)\\
			\leq & \left(\int_{\Sigma}(H_{k}({\kappa}-\varepsilon))^{-\alpha}(\lambda'- \varepsilon u)\right)^{\frac{k}{k\alpha+k-1}}\left(\int_{\Sigma}(H_{k}({\kappa}-\varepsilon))^{1-\alpha}(\lambda'- \varepsilon
			 u)\right)^{\frac{k\alpha-1}{k\alpha+k-1}}.
		\end{aligned}
	\end{equation}
	Proceeding as above, we obtain
	\begin{equation*}
		\begin{aligned}
			&\int_{\Sigma}(H_k({\kappa}-\varepsilon))^{1-\alpha}(\lambda'- \varepsilon u)\\
			\leq & \left(\int_{\Sigma}(H_{k}({\kappa}-\varepsilon))^{\frac{-1+k+p-pk\alpha}{k}}(\lambda'- \varepsilon u)\right)^{\frac{1}{p}}\left(\int_{\Sigma}(H_{k}({\kappa}-\varepsilon))^{1-\alpha}(\lambda'- \varepsilon u)\right)^{\frac{p-1}{p}}.
		\end{aligned}
	\end{equation*}
	That is,
	\begin{equation*}
		\int_{\Sigma}(H_k({\kappa}-\varepsilon))^{1-\alpha}(\lambda'- \varepsilon u)\leq\int_{\Sigma}(H_{k}({\kappa}-\varepsilon))^{\frac{-1+k+p-pk\alpha}{k}}(\lambda'- \varepsilon u).
	\end{equation*}
	Choose $p$, such that $\frac{-1+k+p-pk\alpha}{k}=-\alpha$, then $p=\frac{k\alpha-1+k}{k\alpha-1}$. Thus, the above inequality becomes
	\begin{equation*}
		\int_{\Sigma}(H_k({\kappa}-\varepsilon))^{1-\alpha}(\lambda'- \varepsilon u)\leq\int_{\Sigma}(H_{k}({\kappa}-\varepsilon))^{-\alpha}(\lambda'- \varepsilon u).
	\end{equation*}
	Substituting the above inequality into (\ref{holder3}), we obtain
	\begin{equation}\label{holder4}
		\int_{\Sigma}(H_{k}({\kappa}-\varepsilon))^{-\frac{1}{k}}(\lambda'- \varepsilon u)\leq\int_{\Sigma}(H_{k}({\kappa}-\varepsilon))^{-\alpha}(\lambda'- \varepsilon u).
	\end{equation}
	Combining with (\ref{holder1}), it illustrates that the equalities hold in the Heintze-Karcher type inequality (\ref{thm4hk}) and the Minkowski type formula (\ref{thm4mk}). We conclude that $\Sigma$ is a slice as before.
	
\end{proof}

\section{Proof of Theorems \ref{glwthm} and \ref{glwthm1}}\label{sec6}
In this section, we investigate some rigidity results in warped product manifolds where the fiber does not have constant sectional curvature. Theorem \ref{glwthm} and \ref{glwthm1} will be proved using the classical integral method due to Montiel and Ros \cite{MR91,Ros87}. The main tools are minkowski formula established by \cite{GLW19} and Heintze-Karcher type inequality (\ref{space form hkineq}).

\begin{proof}[Proof of Theorem \ref{glwthm}]
	(i) Recall the assumptions $\partial_{1}a \geq 0$, $\partial_{2}a \leq 0$ and $h_{ij}>\varepsilon g_{ij}$ if $\partial_{2}a$ dose not vanish at some point. Using (\ref{hession phi}), (\ref{hession u}) and integrating by parts, we have
	\begin{equation}\label{glwa1}
		\begin{aligned}
			& \int_{\Sigma} a (\Phi(r),\varepsilon\Phi(r)-u) \left((\lambda' - \varepsilon u) - u H_1 (\kappa - \varepsilon)\right) d \mu \\
			= & \frac{1}{n} \int_{\Sigma} a (\Phi(r),\varepsilon\Phi(r)-u) \Delta \Phi d \mu \\
			= & -\frac{1}{n} \int_{\Sigma} \sum_{p=1}^{n} \left(\partial_{1}a -\partial_{2}a (\kappa_{p}-\varepsilon)\right) |\nabla_p \Phi|^2 d \mu \leq 0.
		\end{aligned}
	\end{equation}
	Applying the above formula, we obtain
	\begin{align*}
		& a (\Phi(r),\varepsilon\Phi(r)-u) H_1 (\kappa - \varepsilon) \int_{\Sigma} u d\mu \\
		= & \int_{\Sigma} u a (\Phi(r),\varepsilon\Phi(r)-u) H_1 (\kappa - \varepsilon) d\mu\\
		\geq & \int_{\Sigma} (\lambda' - \varepsilon u) a (\Phi(r),\varepsilon\Phi(r)-u) d\mu \\
		= & a (\Phi(r),\varepsilon\Phi(r)-u) H_1 (\kappa - \varepsilon) \int_{\Sigma} \frac{\lambda' - \varepsilon u}{H_1 (\kappa - \varepsilon)} d\mu.
	\end{align*}
	Thus we have
	\begin{equation}\label{reverse hk}
		\int_{\Sigma} u d\mu \geq \int_{\Sigma} \frac{\lambda' - \varepsilon u}{H_1 (\kappa - \varepsilon)} d\mu.
	\end{equation}
	
	On the other hand, by Proposition \ref{space form hkp}, we derive that
	$$
	\int_{\Sigma} u d \mu= (n+1) \int_{\Omega} \lambda' d v \leq \int_{\Sigma} \frac{\lambda'-\varepsilon u}{H_1 (\kappa-\varepsilon)} d \mu.
	$$
	The latter inequality, coupled with (\ref{reverse hk}), gives that equality holds in (\ref{space form hkineq}), which implies that $\Sigma$ is totally umbilical. As in the proof of \cite[Sec. 4]{B13}, the condition (\ref{glwcondition}) implies $\Sigma$ is a slice.
	
	(ii) Denote by $\tilde{h}_j^i=h_j^i- \varepsilon\delta_j^i$, applying (\ref{hession phi}), we have
	\begin{equation}\label{intt1}
		\begin{aligned}
			\nabla_j \left( T_1^{ij}(\tilde{h})\nabla_i \Phi\right) = n(n-1) \left((\lambda' - \varepsilon u) H_1 (\kappa - \varepsilon) - u H_2 (\kappa - \varepsilon)\right) + \nabla_i \Phi \nabla_j (T_1^{ij}(\tilde{h})).
		\end{aligned}
	\end{equation}
	As in the proof of \cite[Lemma 2.3]{GLW19} and \cite[Lemma 3.1]{GLW19}, we get
	\begin{equation}\label{divt1}
		\begin{aligned}
			\nabla_i \Phi \nabla_j (T_1^{ij}(\tilde{h}))
			=& -g^{ij} \operatorname{Ric}(e_i, \nu) \nabla_j \Phi \\
			=& (n-1) \left(\frac{\lambda^{\prime\prime}}{\lambda} + \frac{K - {\lambda^{\prime}}^2}{\lambda^2}\right) \frac{u}{\lambda^2} |\nabla \Phi|^2 \\
			& + g^{ij} \left(({\operatorname{Ric}_N})_{ik} - (n-1) K ({g_N})_{ik}\right) \frac{u}{\lambda^2} \nabla^k r \nabla_j r \\
			\geq & 0,
		\end{aligned}
	\end{equation}
	where in the last inequality we used the condition (\ref{glwcondition}). Multiplying (\ref{intt1}) by $a$ and integrating by parts, by (\ref{divt1}), we can get
	\begin{equation}\label{glwa2}
		\begin{aligned}
			& \int_{\Sigma} a (\Phi(r),\varepsilon\Phi(r)-u) \left((\lambda' - \varepsilon u) H_1 (\kappa - \varepsilon) - u H_2 (\kappa - \varepsilon)\right) d \mu \\
			\leq & \frac{1}{n(n-1)} \int_{\Sigma} a (\Phi(r),\varepsilon\Phi(r)-u) \nabla_j \left( T_1^{ij}(\tilde{h})\nabla_i \Phi\right) d \mu \\
			= & -\frac{1}{n(n-1)} \int_{\Sigma} \sum_{p=1}^{n} T_1^{pp}(\tilde{h}) \left(\partial_{1}a -\partial_{2}a (\kappa_{p}-\varepsilon)\right) |\nabla_p \Phi|^2 d \mu \leq 0.
		\end{aligned}
	\end{equation}
	Applying the above formula and Newton-MacLaurin inequality (\ref{nm1}), we obtain
	\begin{align*}
		& a (\Phi(r),\varepsilon\Phi(r)-u) H_2 (\kappa - \varepsilon) \int_{\Sigma} u d\mu \\
		= & \int_{\Sigma} u a (\Phi(r),\varepsilon\Phi(r)-u) H_2 (\kappa - \varepsilon) d\mu\\
		\geq & \int_{\Sigma} (\lambda' - \varepsilon u) a (\Phi(r),\varepsilon\Phi(r)-u) H_1 (\kappa - \varepsilon) d\mu \\
		= & a (\Phi(r),\varepsilon\Phi(r)-u) H_2 (\kappa - \varepsilon) \int_{\Sigma} \frac{(\lambda' - \varepsilon u)H_1 (\kappa - \varepsilon)}{H_2 (\kappa - \varepsilon)} d\mu\\
		\geq & a (\Phi(r),\varepsilon\Phi(r)-u) H_2 (\kappa - \varepsilon) \int_{\Sigma} \frac{\lambda' - \varepsilon u}{H_1 (\kappa - \varepsilon)} d\mu.
	\end{align*}
	Thus we have
	\begin{equation}\label{reverse hk1}
		\int_{\Sigma} u d\mu \geq \int_{\Sigma} \frac{\lambda' - \varepsilon u}{H_1 (\kappa - \varepsilon)} d\mu.
	\end{equation}
	
	On the other hand, by Proposition \ref{space form hkp}, we derive that
	$$
	\int_{\Sigma} u d \mu= (n+1) \int_{\Omega} \lambda' d v \leq \int_{\Sigma} \frac{\lambda'-\varepsilon u}{H_1 (\kappa-\varepsilon)} d \mu.
	$$
	As before, the latter inequality, coupled with (\ref{reverse hk1}), implies that  $\Sigma$ is totally umbilical and then is a slice.
	
	(iii) Assume that $a (\Phi(r),\varepsilon\Phi(r)-u)\frac{H_2 ({\kappa}-\varepsilon)}{H_1 ({\kappa}-\varepsilon)} = c >0$ for some constant $c$. Applying the Newton-MacLaurin inequality (\ref{nm1}), we note that
	\begin{equation}\label{ratio ineq}
		a (\Phi(r),\varepsilon\Phi(r)-u) H_1 ({\kappa}-\varepsilon) \geq a (\Phi(r),\varepsilon\Phi(r)-u)\frac{H_2 ({\kappa}-\varepsilon)}{H_1 ({\kappa}-\varepsilon)} = c.
	\end{equation}
	It follows from (\ref{glwa2}) and (\ref{h1}) that
	\begin{equation*}
		\begin{aligned}
			&\int_{\Sigma} (\lambda' - \varepsilon u) a (\Phi(r),\varepsilon\Phi(r)-u) H_1 ({\kappa}-\varepsilon) d \mu \\
			\leq&\int_{\Sigma} u a (\Phi(r),\varepsilon\Phi(r)-u) H_2 ({\kappa}-\varepsilon) d \mu \\
			=&c\int_{\Sigma} u H_1 ({\kappa}-\varepsilon) d \mu = c \int_{\Sigma} (\lambda' - \varepsilon u) d \mu.
		\end{aligned}
	\end{equation*}
	This gives
	\begin{equation*}
		\int_{\Sigma} (\lambda' - \varepsilon u) \left(a (\Phi(r),\varepsilon\Phi(r)-u) H_1 ({\kappa}-\varepsilon) - c\right) d \mu \leq 0.
	\end{equation*}
	The above together with (\ref{ratio ineq}) imply that $a (\Phi(r),\varepsilon\Phi(r)-u) H_1 ({\kappa}-\varepsilon) = c$. Finally, from Theorem \ref{glwthm}(i), we complete the proof.
	
	(iv) By dividing (\ref{glw condition}) by $\eta$, it suffices to prove the result in the case where
	\begin{equation*}
		\begin{aligned}
			& a(\Phi(r),\varepsilon\Phi(r)-u) H_2 ({\kappa}-\varepsilon)
			+ b(\Phi(r),\varepsilon\Phi(r)-u) (H_1 ({\kappa}-\varepsilon))^2 = 1.
		\end{aligned}
	\end{equation*}
	Repeating the argument used to establish (\ref{glwa2}), we can also get
	\begin{align*}
		& \int_{\Sigma} b (\Phi(r),\varepsilon\Phi(r)-u) \left((\lambda' - \varepsilon u) H_1 (\kappa - \varepsilon) - u H_2 (\kappa - \varepsilon)\right) d \mu \\
		\leq & -\frac{1}{n(n-1)} \int_{\Sigma} \sum_{p=1}^{n} T_1^{pp}(\tilde{h}) \left(\partial_{1}b -\partial_{2}b (\kappa_{p}-\varepsilon)\right) |\nabla_p \Phi|^2 d \mu \leq 0.
	\end{align*}
	Therefore, by Newton-MacLaurin inequality (\ref{nm1}),
	\begin{align*}
		\int_{\Sigma} u d \mu = \int_{\Sigma} & u \left(a(\Phi(r),\varepsilon\Phi(r)-u) H_2 ({\kappa}-\varepsilon)
		+ b(\Phi(r),\varepsilon\Phi(r)-u) (H_1 ({\kappa}-\varepsilon))^2\right) d \mu\\
		\geq \int_{\Sigma} & u
		\left(a(\Phi(r),\varepsilon\Phi(r)-u) H_2 ({\kappa}-\varepsilon) + b(\Phi(r),\varepsilon\Phi(r)-u)
		H_2 ({\kappa}-\varepsilon)\right) d \mu\\
		\geq \int_{\Sigma} & (\lambda' - \varepsilon u) \left(a (\Phi(r),\varepsilon\Phi(r)-u)H_1 ({\kappa}-\varepsilon)+ b(\Phi(r),\varepsilon\Phi(r)-u)H_1 ({\kappa}-\varepsilon)\right) d \mu \\
		\geq \int_{\Sigma} & \frac{\lambda' - \varepsilon u}{H_1 ({\kappa}-\varepsilon)}\left(a (\Phi(r),\varepsilon\Phi(r)-u)H_2 ({\kappa}-\varepsilon)\right.\\
		&\left.+b(\Phi(r),\varepsilon\Phi(r)-u) (H_1 ({\kappa}-\varepsilon))^2\right)  d \mu \\
		= \int_{\Sigma} & \frac{\lambda' - \varepsilon u}{H_1 ({\kappa}-\varepsilon)} d \mu
		\geq (n+1) \int_{\Omega}\lambda'dv= \int_{\Sigma} u d\mu.
	\end{align*}
	We finish the proof by examining the equality case as before.
	
\end{proof}

\begin{proof}[Proof of Theorem \ref{glwthm1}]
	Since $\lambda'>0$, there exists at least one elliptic point on $\Sigma$. This together with the fact $a(\Phi(r),\varepsilon\Phi(r)-u)>0$ yields that the constant is positive and thus $H_1 ({\kappa})>0$ and $H_2 ({\kappa})>0$. The remainder of the proof is almost the same as that of Theorem \ref{glwthm}. The only difference is that instead of (\ref{space form hkineq}), we apply Brendle's Heintze-Karcher inequality \cite{B13}. Thus we complete the proof.
	
\end{proof}

{\bf Acknowledgements.}
Sheng was partially supported by National Key R$\&$D Program of China(No. 2022YFA1005500) and  Natural Science Foundation of China under Grant No. 12031017. Wu was partially supported by National Key R$\&$D Program of China(No. 2022YFA1005501).

\vspace{5mm}


\begin{thebibliography}{99}	

    \bibitem{ADM13}L.J. Al\'{\i}as, D. Impera and M. Rigoli, \textit{Hypersurfaces of constant higher order mean curvature in warped products}, Transactions of the American Mathematical Society, 2013, \textbf{365}(2): 591-621.

    \bibitem{A56}A.D. Alexsandorov, \textit{Uniqueness theorem for surfaces in the large I}, Vestnik Leningrad Univ., 1956, \textbf{11}: 5-17.

    \bibitem{B94}B. Andrews, \textit{Contraction of convex hypersurfaces in Riemannian spaces}, Journal of Differential Geometry, 1994, \textbf{39}(2): 407-431.

    \bibitem{BCW21}B. Andrews, X. Chen and Y. Wei, \textit{Volume preserving flow and Alexandrov–Fenchel type inequalities in hyperbolic space}, Journal of the European Mathematical Society, 2021, \textbf{23}(7): 2467-2509.

    \bibitem{AD14}C.P. Aquino and H.F. de Lima, \textit{On the unicity of complete hypersurfaces immersed in a semi-Riemannian warped product}, The Journal of Geometric Analysis, 2014, \textbf{24}: 1126-1143.

    \bibitem{BC97}J.L.M. Barbosa and A.G. Colares, \textit{Stability of hypersurfaces with constant $r$-mean curvature}, Annals of Global Analysis and Geometry, 1997, \textbf{15}(3): 277-297.

    \bibitem{B13}S. Brendle, \textit{Constant mean curvature surfaces in warped product manifolds}, Publications math\'{e}matiques de l'IH\'{E}S, 2013, \textbf{117}(1): 247-269.

    \bibitem{BE13}S. Brendle and M. Eichmair, \textit{Isoperimetric and Weingarten surfaces in the Schwarzschild manifold}, Journal of Differential Geometry, 2013, \textbf{94}(3): 387-407.

    \bibitem{BH17}S. Brendle and G. Huisken, \textit{A fully nonlinear flow for two-convex hypersurfaces in Riemannian manifolds}, Inventiones mathematicae, 2017, \textbf{210}: 559-613.

    \bibitem{BHW16}S. Brendle, P.K. Hung and M.T. Wang, \textit{A Minkowski Inequality for Hypersurfaces in the Anti‐de Sitter‐Schwarzschild Manifold}, Communications on Pure and Applied Mathematics, 2016, \textbf{69}(1): 124-144.

    \bibitem{G23}S. Gao, \textit{Closed self-similar solutions to flows by negative powers of curvature}, The Journal of Geometric Analysis, 2023, \textbf{33}(12): 370.

    \bibitem{GM21}S. Gao and H. Ma, \textit{Characterizations of umbilic hypersurfaces in warped product manifolds}, Frontiers of Mathematics in China, 2021, \textbf{16}: 689-703.

    \bibitem{G02}P. Guan, \textit{Topics in geometric fully nonlinear equations}, Lecture Notes, http://www. math. mcgill. ca/guan/notes. html, 2002.

    \bibitem{GLW19}P. Guan, J. Li and M.T. Wang, \textit{A volume preserving flow and the isoperimetric problem in warped product spaces}, Transactions of the American Mathematical Society, 2019, \textbf{372}(4): 2777-2798.

    \bibitem{HK78} E. Heintze and H. Karcher, \textit{A general comparison theorem with applications to volume estimates for submanifolds}, Annales Scientifiques de l'\'{E}cole Normale Sup\'{e}rieure. 1978, \textbf{11}(4): 451-470.

    \bibitem{HLMG09}Y. He, H. Li, H. Ma, and J. Ge, \textit{Compact embedded hypersurfaces with constant higher order anisotropic mean curvatures}, Indiana University Mathematics Journal, 2009, \textbf{58}(2): 853-868.

    \bibitem{HMZ01}O. Hijazi, S. Montiel and X. Zhang, \textit{Dirac Operator on Embedded Hypersurfaces}, Mathematical Research Letters, 2001, \textbf{8}(2): 195-208.

    \bibitem{HLW22}Y. Hu, H. Li and Y. Wei, \textit{Locally constrained curvature flows and geometric inequalities in hyperbolic space}, Mathematische Annalen, 2022, \textbf{382}(3-4): 1425-1474.

    \bibitem{HWZ23}Y. Hu, Y. Wei and T. Zhou, \textit{A Heintze-Karcher type inequality in hyperbolic space}, The Journal of Geometric Analysis, 2024, \textbf{34}(4): 113.

    \bibitem{K98}S.E. Koh, \textit{A characterization of round spheres}, Proceedings of the American Mathematical Society, 1998, \textbf{126}(12): 3657-3660.

    \bibitem{K00}S.E. Koh, \textit{Sphere theorem by means of the ratio of mean curvature functions}, Glasgow Mathematical Journal, 2000, \textbf{42}(1): 91-95.

    \bibitem{KLP18}K.K. Kwong, H. Lee and J. Pyo, \textit{Weighted Hsiung-Minkowski formulas and rigidity of umbilical hypersurfaces}, Mathematical Research Letters, 2018, \textbf{25}(2): 597-616.

    \bibitem{LWX14}H. Li, Y. Wei and C. Xiong, \textit{A note on Weingarten hypersurfaces in the warped product manifold}, International Journal of Mathematics, 2014, \textbf{25}(14): 1450121.

    \bibitem{LWX25}H. Li, Y. Wei and B. Xu, \textit{New Heintze-Karcher type inequalities in sub-static warped product manifolds}, arXiv preprint arXiv:2504.15109, 2025.

    \bibitem{LX} H. Li and B. Xu, \textit{Hyperbolic $p$-sum and horospherical $p$-Brunn-Minkowski theory in hyperbolic space}, arXiv: 2211.06875, 2022.

    \bibitem{LX19}J. Li and C. Xia, \textit{An integral formula and its applications on sub-static manifolds}, Journal of Differential Geometry, 2019, \textbf{113}(3): 493-518.

    \bibitem{M99}S. Montiel, \textit{Uniqueness of spacelike hypersurfaces of constant mean curvature in foliated spacetimes}, Mathematische Annalen, 1999, \textbf{314}(3): 529-553.

    \bibitem{MR91}S. Montiel and A. Ros, \textit{Compact hypersurfaces: the Alexandrov theorem for higher order mean curvatures}, Pitman Monographs and Surveys in Pure and Applied Mathematics \textbf{52} (1991) (in honor of M.P. do Carmo; edited by B. Lawson and K. Tenenblat), 279-296.

    \bibitem{R74}R. Reilly, \textit{On the Hessian of a function and the curvatures of its graph}, Michigan Mathematical Journal, 1974, \textbf{20}(4): 373-383.

    \bibitem{R77}R. Reilly, \textit{Applications of the Hessian operator in a Riemannian manifold}, Indiana University Mathematics Journal, 1977, \textbf{26}(3): 459-472.

    \bibitem{Ros88}A. Ros, \textit{Compact hypersurfaces with constant scalar curvature and a congruence theorem}, Journal of Differential Geometry, 1988, \textbf{27}(2): 215-220.

    \bibitem{Ros87}A. Ros, \textit{Compact hypersurfaces with constant higher order mean curvatures}, Revista Matem\'{a}tica Iberoamericana, 1987, \textbf{3}(3): 447-453.

    \bibitem{S60}R. Stong, \textit{Some characterizations of Riemann $n$-spheres}, Proceedings of the American Mathematical Society, 1960, \textbf{11}(6): 945-951.

    \bibitem{SWW24}W. Sheng, Y. Wang and J. Wu, \textit{On rigidity of hypersurfaces with constant shifted curvature functions in hyperbolic space}, Communications in Analysis and Geometry (to appear), arXiv:2402.04622.

    \bibitem{WWZ23}X. Wang, Y. Wei and T. Zhou, \textit{Shifted inverse curvature flows in hyperbolic space} Calculus of Variations and Partial Differential Equations, 2023, \textbf{62}(3): 93.

    \bibitem{W16}J. Wu, \textit{A new characterization of geodesic spheres in the hyperbolic space}, Proceedings of the American Mathematical Society, 2016, \textbf{144}(7): 3077-3084.

    \bibitem{WX14}J. Wu and C. Xia, \textit{On rigidity of hypersurfaces with constant curvature functions in warped product manifolds}, Annals of Global Analysis and Geometry, 2014, \textbf{46}(1): 1-22.
	
\end{thebibliography}
\end{document}